\newtheorem{theoremIntro}{Theorem}[]
\newtheorem{questionIntro}{Question}
\newtheorem{theorem}{Theorem}[section]
\newtheorem{lemma}[theorem]{Lemma}
\newtheorem{proposition}[theorem]{Proposition}
\newtheorem{definition}[theorem]{Definition}
\newtheorem{corollary}[theorem]{Corollary}
\theoremstyle{remark}
\newtheorem{remark}[theorem]{Remark}
\newtheorem{example}[theorem]{Example}
\numberwithin{equation}{section}
\newcommand{\calB}{\ensuremath{\mathcal{B}}}
\newcommand{\calG}{\ensuremath{\mathcal{G}}}
\newcommand{\calF}{\ensuremath{\mathcal{F}}}
\newcommand{\calL}{\ensuremath{\mathcal{L}}}
\newcommand{\xra}{\xrightarrow}
\newcommand{\mo}{{-1}}
\newcommand{\bbZ}{\ensuremath{\mathbb{Z}}}
\newcommand{\bbR}{\ensuremath{\mathbb{R}}}
\newcommand{\bbN}{\ensuremath{\mathbb{N}}}
\newcommand{\rk}{\ensuremath{\mathrm{rk}}}
\begin{document}

\vspace*{-5mm}
\title{Minimal additive complements in finitely generated abelian groups}

\author{Arindam Biswas}
\address{Universit\"at Wien, Fakult\"at f\"ur Mathematik, Oskar-Morgenstern-Platz 1, 1090 Wien, Austria \& Erwin Schr\"odinger International Institute for Mathematics and Physics (E.S.I.) Boltzmanngasse 9, 1090 Wien, Austria}
\curraddr{}
\email{arindam.biswas@univie.ac.at}
\thanks{}

\author{Jyoti Prakash Saha}
\address{Department of Mathematics, Indian Institute of Science Education and Research Bhopal, Bhopal Bypass Road, Bhauri, Bhopal 462066, Madhya Pradesh,
India}
\curraddr{}
\email{jpsaha@iiserb.ac.in}
\thanks{}

\subjclass[2010]{11B13, 05E15, 05B10}

\keywords{Sumsets, Additive complements, Minimal complements, Additive number theory}

\begin{abstract}
Given two non-empty subsets $W,W'\subseteq G$ in an arbitrary abelian group $G$, $W'$ is said to be an additive complement to $W$ if $W + W'=G$ and it is minimal if no proper subset of $W'$ is a complement to $W$. The notion was introduced by Nathanson and previous work by him, Chen--Yang, Kiss--S\`andor--Yang etc. focussed on $G =\mathbb{Z}$. In the higher rank case, recent work by the authors treated a class of subsets, namely the eventually periodic sets. However, for infinite subsets, not of the above type, the question of existence or inexistence of minimal complements is open. In this article, we study subsets which are not eventually periodic. We introduce the notion of ``spiked subsets" and give necessary and sufficient conditions for the existence of minimal complements for them. This provides a partial answer to a problem of Nathanson.
\end{abstract}

\maketitle
\vspace*{-5mm}
\tableofcontents

\vspace*{-8mm}
\section{Introduction}

\subsection{Motivation and Background}

Let $(G,+)$ be an abelian group and $W\subseteq G$ be a nonempty subset. A nonempty set $W'\subseteq G$ is said to be an \textit{additive complement} to $W$ if $$W + W' = G.$$
Given any non-empty set, it is clear that a complement always exist (namely, take the whole group to be the complement). So the set of all additive complements to a particular set $W$, is non-empty. Also, the above set of complements is partially ordered (by inclusion). The whole group $G$ is a maximal (and also maximum) element. However, the question of existence of a minimal element is tricky. It depends on the structure of the set $W$.
Nathanson in $2011$ introduced the notion of a minimal complement.

\begin{definition}[Minimal complement]
A complement $W'$ to $W$ is said to be \textnormal{minimal} if no proper subset of $W'$ is a complement to $W$, i.e., 
$$W + W' = G \,\text{ and }\, W + (W'\setminus \lbrace w'\rbrace)\subsetneq G \,\,\, \forall w'\in W'.$$
\end{definition}

It was shown by Nathanson (see \cite[Theorem 8]{NathansonAddNT4}) that for a non-empty, finite subset $W$ in the additive group $\mathbb{Z}$, any complement to $W$ has a minimal complement. In the same paper he posed some general questions on existence of minimal complements. 

\begin{questionIntro}
\cite[Problem 11]{NathansonAddNT4}
\label{nathansonprob11}
``Let $W$ be an infinite set of integers. Does there exist a minimal
complement to $W$? Does there exist a complement to $W$ that does not contain a
minimal complement?''
\end{questionIntro}

\begin{questionIntro}
\cite[Problem 12]{NathansonAddNT4}
\label{nathansonprob12}
``Let $G$ be an infinite group, and let $W$ be a finite subset of $G$. Does
there exist a minimal complement to $W$? Does there exist a complement to $W$ that
does not contain a minimal complement?''
\end{questionIntro}

\begin{questionIntro}\footnote{The statement of Question \ref{nathansonprob13}  as formulated here is different from the exact statement of \cite[Problem 13]{NathansonAddNT4}. The two formulations are equivalent.}
\cite[Problem 13]{NathansonAddNT4}\label{nathansonprob13}
Let $G$ be an infinite group, and let $W$ be an infinite subset of $G$.
\begin{enumerate}[(a)]
\item Does there exist a minimal complement to $W$? 
\item Does every complement of $W$ contain a minimal complement? 

\end{enumerate}
\end{questionIntro}

Chen and Yang were the first to give a partial answer to Question \ref{nathansonprob11}. They constructed two infinite sets $W_{1}$ and $W_{2}$ such that, every complement to $W_{1}$ always contains a minimal complement whereas, there exists a complement to $W_{2}$ that does not contain a minimal complement \cite{ChenYang12}. Later, Kiss--S\`andor--Yang constructed a class of infinite sets in $\mathbb{Z}$ which they called ``eventually periodic sets" and gave necessary and sufficient conditions for these sets to have a minimal additive complement in $\mathbb{Z}$ \cite{KissSandorYangJCT19}.

All the above dealt with Question \ref{nathansonprob11}. In a recent work, we answered Question \ref{nathansonprob12} completely and gave the first partial solution to Question \ref{nathansonprob13} (see \cite[Theorems 2.1, 4.9, 4.12, 4.23]{MinComp1}). For the latter, a part of the method was to define a notion of eventually periodic sets in the free abelian group $\mathbb{Z}^{r}$ of rank $r\geq 1$. Further, taking the product of these eventually periodic sets in $\mathbb{Z}^{r}$ with finite groups, we gave conditions for sets in arbitrary finitely generated abelian groups to have additive minimal complements (see \cite[Theorem 5.6]{MinComp1}). However, not all infinite subsets in an arbitrary finitely generated abelian group (or even a free abelian group of finite rank) are of the above form. Thus, for a complete understanding of the question we have to consider subsets which don't fall in the purview of the above.

\subsection{Statement of results}
First, we show that minimal complements of sets do not ``escape" when the sets are confined inside some proper subgroup of a larger group.
\begin{theoremIntro}[Theorem \ref{Thm:MinCompInSubgp}]
Let $G$ be a group and $W$ be a nonempty subset of $G$. Then the following statements are equivalent.  
\begin{enumerate}
	\item $W$ admits a minimal complement in $G$.
	\item $W$ admits a minimal complement in $H$ for any subgroup $H$ of $G$ containing $W$. 
	\item $W$ admits a minimal complement in $H$ for some subgroup $H$ of $G$ containing $W$. 
\end{enumerate}
\end{theoremIntro}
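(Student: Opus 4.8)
The plan is to reduce all three statements to a single key fact: for any subgroup $H\leq G$ containing $W$, the set $W$ admits a minimal complement in $H$ if and only if it admits one in $G$. Granting this, the equivalences follow at once. Indeed, $(2)\Rightarrow(3)$ is immediate because $G$ is itself a subgroup of $G$ containing $W$; the implication $(3)\Rightarrow(1)$ is the ``if'' part of the key fact applied to the given subgroup; and $(1)\Rightarrow(2)$ is the ``only if'' part applied to an arbitrary $H$ containing $W$. So the entire content lies in the two directions of the key fact, and the organizing principle throughout is that the hypothesis $W\subseteq H$ forces the sumset $W+C$ to split along the cosets of $H$.

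For the direction from $G$ to $H$ (restriction), suppose $C$ is a minimal complement to $W$ in $G$ and set $D:=C\cap H$. Since $W\subseteq H$ one has $W+D\subseteq H$, and every $h\in H$ can be written as $h=w+c$ with $w\in W$, $c\in C$; because $w,h\in H$ this forces $c\in H$, hence $c\in D$, so $W+D=H$ and $D$ is a complement to $W$ in $H$. Minimality transfers by the following observation: if some $d\in D$ could be deleted with $W+(D\setminus\{d\})$ still equal to $H$, then $C\setminus\{d\}$ would remain a complement to $W$ in $G$, since elements of $H$ are recovered inside $H$, while an element $g\notin H$ is of the form $g=w+c$ with $c\notin H$ (otherwise $g\in H$), so $c\neq d$ and $c$ survives the deletion. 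This contradicts the minimality of $C$.

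For the direction from $H$ to $G$ (extension), fix a minimal complement $D$ to $W$ in $H$ and a system of coset representatives $\{g_i\}$ for the cosets of $H$ in $G$. The plan is to glue a copy of $D$ into each coset, setting $C:=\bigcup_i(g_i+D)$. Using $W\subseteq H$, the sumset respects the coset decomposition: $W+(g_i+D)=g_i+(W+D)=g_i+H$ is exactly the $i$-th coset, so $W+C=G$. For minimality, deleting an element $g_i+d$ of $C$ affects only the $i$-th coset, where the remaining complement becomes $g_i+(D\setminus\{d\})$; by minimality of $D$ this no longer covers $g_i+H$, whence $W+(C\setminus\{g_i+d\})\subsetneq G$. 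Thus $C$ is a minimal complement in $G$, and using a single fixed $D$ in every coset sidesteps any appeal to choice.

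The computations are routine; the one point requiring care, and the crux of the argument, is precisely this coset-locality principle, namely that adding or deleting one element of the complement perturbs exactly one coset of $H$. I expect the main obstacle to be bookkeeping rather than ideas: one must check minimality in both directions from this principle, and, if $G$ is permitted to be non-commutative, replace the two-sided manipulations above by consistently using the right cosets $Hg_i$ and writing complements as $\bigcup_i Dg_i$, with $W$ acting on the left so that $W(Dg_i)=(WD)g_i=Hg_i$; the argument is otherwise unchanged.
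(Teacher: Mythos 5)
Your proposal is correct and follows essentially the same route as the paper: your restriction step (intersecting the minimal complement with $H$) is exactly the paper's passage to the class $M\cap H$ of the coset decomposition, and your extension step (gluing translates $Dg_i$ over right coset representatives) is precisely the paper's $\cup_\lambda Mg_\lambda$, including the same treatment of non-commutativity via right cosets. Your write-up in fact spells out the minimality verifications in both directions more explicitly than the paper does.
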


This is a crucial and satisfactory phenomenon in arbitrary groups as it allows one to concentrate only inside the subgroup generated by the set (the existence of whose minimal complement we seek to investigate). As an example  suppose $G= \mathbb{Z}^{d}, d>1$ and $W\subset \mathbb{Z}^{r} \subset G$ with $r<d$. Then a minimal complement of $W$ exists in $G$ if and only if $W$ admits a minimal complement in $\mathbb{Z}^{r}$.

Next, we study subsets truncated along vertical lines. We show that they do not admit a minimal complement. This is Prop. \ref{Prop:Truncated2D}. This allows us to derive the fact that upper or lower portions of graphs of functions in free abelian groups of dimension $d$ do not admit minimal complements. This phenomenon also holds under translations and right rotations [see Prop. \ref{Prop:Truncated30DSlant} and Cor. \ref{Cor:SideParabola}, Cor. \ref{Cor:sideWhatever}]. In $\mathbb{Z}^{2}$ we show that it also holds under rational rotations [see Cor. \ref{Cor:LowerParabolaRota} and Cor. \ref{Cor:LowerWhateverRota}]. As a particular case, truncation along horizontal lines give the same result.

What happens when the subsets contain lines? This brings us to the notion of a moderation function [see Def. \ref{moderation}] and spiked subsets [see Def. \ref{Spiked subsets}]. We show that spiked subsets belong to the class of sets which are not eventually periodic [see Prop. \ref{Spiked vs eventually periodic}]. We first study spiked subsets in the free abelian group $\mathbb{Z}^{d} (d\geqslant 2)$ and show a sufficient condition for the existence of minimal complements and its relation with the moderation function. Specifically,

\begin{theoremIntro}
	[Theorem \ref{Thm:LinesNoise}]
	Let $u:\bbZ^k\to \bbZ$ be a function. Let $\calB$ be a subset of $\bbZ^k$ admitting a minimal complement $M$ in $\bbZ^k$. Then $u$ admits a moderation and for any moderation $v$ of $u$, the subset 
	$$M_v:=\{(x, v(x))\,|\, x\in M\}$$
	of $\bbZ^{k+1}$ is a minimal complement of any subset $X$ of $\bbZ^{k+1}$ satisfying 
	\begin{equation}
	\label{Eqn:ContainmentX}
	\calB\times \bbZ
	\subseteq
	X
	\subseteq 
	(\calB\times \bbZ )
	\bigsqcup 
	\left(
	\sqcup 
	_{x\in \bbZ^k\setminus \calB}
	\left(
	\{
	x
	\}
	\times 
	(-\infty, u(x))
	\right)
	\right).
	\end{equation}
\end{theoremIntro}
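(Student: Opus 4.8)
The plan is to split the statement into three independent pieces: that $M_v$ is always a complement of $X$, that $u$ admits a moderation, and that $M_v$ is minimal. I would first dispose of the complement assertion, which uses only the left containment $\calB\times\bbZ\subseteq X$ and nothing about $v$: given $(y,t)\in\bbZ^{k+1}$, since $M+\calB=\bbZ^k$ there is an $x\in M$ with $y-x\in\calB$, and then $(x,v(x))+(y-x,t-v(x))=(y,t)$ with $(y-x,t-v(x))\in\calB\times\bbZ\subseteq X$. Hence $M_v+X=\bbZ^{k+1}$ for every function $v$, so the real content is the existence of a moderation and the minimality.

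For existence I would exhibit one moderation explicitly. Put $U(n):=\max\{u(w)\,:\,\|w\|_\infty\le n\}$, which is finite since each $\ell^\infty$-ball is finite, and set $v(x):=-U(2\|x\|_\infty)-\|x\|_\infty$, an integer-valued function. Fixing $z$, for all $x$ with $\|x\|_\infty\ge\|z\|_\infty$ one has $\|z-x\|_\infty\le 2\|x\|_\infty$, so $u(z-x)\le U(2\|x\|_\infty)$ and therefore $v(x)+u(z-x)\le-\|x\|_\infty\le 0$, while the finitely many remaining $x$ contribute a bounded amount; thus $x\mapsto v(x)+u(z-x)$ is bounded above for each $z$, which is the defining property of a moderation (Def. \ref{moderation}). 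This shows $u$ admits a moderation.

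For minimality I would argue one removable element at a time. Fix $x_0\in M$. Minimality of $M$ in $\bbZ^k$ gives a $y_0$ with $y_0\notin(M\setminus\{x_0\})+\calB$; combined with $M+\calB=\bbZ^k$ this forces $y_0-x_0\in\calB$ and $y_0-x\notin\calB$ for every $x\in M\setminus\{x_0\}$. The idea is that, since these first coordinates $y_0-x$ lie outside $\calB$, the right containment in \eqref{Eqn:ContainmentX} only allows elements of $X$ above them to have height $<u(y_0-x)$, so a competing translate $(x,v(x))$ can cover $(y_0,t)$ only when $t<v(x)+u(y_0-x)$. I would then use the moderation property at $z=y_0$: the integer set $\{v(x)+u(y_0-x):x\in\bbZ^k\}$ is bounded above, hence has a maximum $S$, and taking any $t\ge S$ makes $(y_0-x,t-v(x))\notin X$ for all $x\in M\setminus\{x_0\}$, while $(y_0-x_0,t-v(x_0))\in\calB\times\bbZ\subseteq X$. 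Thus $(y_0,t)$ lies in $M_v+X$ only through $(x_0,v(x_0))$, so no point of $M_v$ can be dropped and $M_v$ is minimal.

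The hard part is this minimality step, and in particular seeing that the moderation hypothesis is exactly what is needed. The downward spikes, capped at height $u$, can only be escaped by pushing the target height $t$ upward, and the obstruction to doing this uniformly over the possibly infinite competing set $M\setminus\{x_0\}$ is precisely the finiteness of $\sup_x(v(x)+u(y_0-x))$; this is what the moderation encodes, and why boundedness of a set of integers (yielding an attained maximum, hence an admissible integer $t$) is the crux. I expect the two things to watch are that the right containment in \eqref{Eqn:ContainmentX} is genuinely used, since it prevents $X$ from carrying spike points arbitrarily high and thereby breaking the escape, and that the left containment $\calB\times\bbZ\subseteq X$ is what lets the witness $(y_0,t)$ be recovered through $x_0$.
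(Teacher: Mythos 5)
Your proposal is correct and follows essentially the same route as the paper: the paper also splits into the complement property (via $\calB\times\bbZ$), existence of a moderation built as the negative of a maximum of $u$ over growing balls (Lemma \ref{Lemma:ModerationExists}), and a minimality argument (Proposition \ref{Prop:LinesNoise}) in which the witness $y_0\notin(M\setminus\{x_0\})+\calB$ and the moderation bound cap the height to which spike translates can cover the vertical line over $y_0$. Your only departures are cosmetic: a direct rather than by-contradiction phrasing of minimality, and $\ell^\infty$ instead of $\ell^2$ balls in the moderation construction.
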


See also Prop. \ref{Prop:LinesNoise}. From the above theorem and proposition we give classes of (via corollaries and remarks - see Cor. \ref{Cor4.8}, Cor. \ref{Cor4.9}, Remark \ref{Rk:PolyModeration}) subsets of the free abelian groups admitting minimal complements.

Now, we shift our attention to general abelian groups. We define the notion of spiked subsets in this setting and show a necessary and sufficient condition for them to have minimal complements.

\begin{theoremIntro}[Theorem \ref{Thm:NecessarySuffi}]
	Let $G_1, G_2$ be subgroups of $\calG$ such that the intersection $G_1\cap G_2$ is trivial and $G_2$ is free of positive rank. Let $\calB$ be a nonempty subset of $G_1$. Then the following statements are equivalent. 
	\begin{enumerate}
		\item $\calB$ admits a minimal complement in $\calG$.
		\item $\calB$ admits a minimal complement in $G_1$.
		\item Any $(u, \varphi)$-bounded spiked subset of $\calG$ with respect to $G_1, G_2$, having $\calB$ as its base, admits a minimal complement in $G_1\times G_2$ which is the graph of any $\varphi$-moderation of $u$ restricted to some subset of $G_1$. 
		\item Some $(u, \varphi)$-bounded spiked subset of $\calG$ with respect to $G_1, G_2$, having $\calB$ as its base, admits a minimal complement in $G_1\times G_2$ which is the graph of any $\varphi$-moderation of $u$ restricted to some subset of $G_1$. 
	\end{enumerate}
\end{theoremIntro}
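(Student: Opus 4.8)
The plan is to prove the fourfold equivalence through the cycle $(1)\Leftrightarrow(2)$, $(2)\Rightarrow(3)\Rightarrow(4)\Rightarrow(2)$. First, $(1)\Leftrightarrow(2)$ is immediate from Theorem \ref{Thm:MinCompInSubgp}: since $G_1$ is a subgroup of $\calG$ containing $\calB$, the set $\calB$ admits a minimal complement in $\calG$ if and only if it admits one in $G_1$. The implication $(3)\Rightarrow(4)$ is purely formal, provided one checks that at least one $(u,\varphi)$-bounded spiked subset with base $\calB$ exists; I would dispose of this at the outset by exhibiting $\calB\times G_2$ itself (all spikes empty) as such a subset. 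Thus the substance is carried by $(2)\Rightarrow(3)$ and $(4)\Rightarrow(2)$, and I expect the former to be the crux.

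For $(2)\Rightarrow(3)$, assume $\calB$ admits a minimal complement $M$ in $G_1$ and let $X$ be any $(u,\varphi)$-bounded spiked subset with base $\calB$. Since $G_1\cap G_2$ is trivial, the subgroup they generate is the internal direct product $G_1\times G_2\supseteq X$, so by Theorem \ref{Thm:MinCompInSubgp} it suffices to produce a minimal complement of $X$ inside $G_1\times G_2$. The argument parallels Theorem \ref{Thm:LinesNoise} and Prop. \ref{Prop:LinesNoise}, with $\bbZ^k$ replaced by $G_1$, the target $\bbZ$ replaced by $\varphi(G_2)$, and the downward rays by the $\varphi$-bounded spikes. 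Concretely I would (a) show $u$ admits a $\varphi$-moderation by using that $M$ is a complement to build a function $v$ whose graph over $M$ has finite fibrewise suprema; (b) fix any $\varphi$-moderation $v$ of $u$, restrict it to $M$, and set $M_v=\{(m,v(m))\mid m\in M\}$; (c) verify $X+M_v=G_1\times G_2$ using only $\calB+M=G_1$ together with $\calB\times G_2\subseteq X$, since the full fibres over the base absorb the $G_2$-coordinate and render the moderation irrelevant to the complement property; and (d) verify minimality: for each $m_0\in M$, minimality of $M$ yields $y_0\in G_1$ with $y_0-m_0\in\calB$ but $y_0-m\notin\calB$ for every $m\in M\setminus\{m_0\}$, so that any covering of $(y_0,g)$ by $M_v\setminus\{(m_0,v(m_0))\}$ must pass through a spike, forcing $\varphi(g)<\sup_{m\neq m_0}\big(u(y_0-m)+\varphi(v(m))\big)$; the defining property of a $\varphi$-moderation makes this supremum finite, so $(y_0,g)$ is uncovered once $\varphi(g)$ is large.

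For $(4)\Rightarrow(2)$, suppose some $(u,\varphi)$-bounded spiked subset $X$ with base $\calB$ admits a minimal complement $M_v=\{(m,v(m))\mid m\in S\}$ in $G_1\times G_2$, the graph of a $\varphi$-moderation $v$ of $u$ restricted to $S\subseteq G_1$. I claim $S$ is a minimal complement of $\calB$ in $G_1$. The equality $\calB+S=G_1$ follows because, for fixed $y$, covering the points $(y,g)$ of arbitrarily large $\varphi$-value can only be achieved through the full fibres $\calB\times G_2$, the spikes over $G_1\setminus\calB$ being $\varphi$-bounded and $v$ moderate, whence $\sup_{m\in S}\big(u(y-m)+\varphi(v(m))\big)<\infty$ forces $y-m\in\calB$ for some $m\in S$. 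For minimality, given $m_0\in S$, minimality of $M_v$ supplies $(y_0,g_0)$ covered only by $(m_0,v(m_0))$; were $y_0=a+m$ with $a\in\calB$ and $m\in S\setminus\{m_0\}$, then $(a,g_0-v(m))\in\calB\times G_2\subseteq X$ would cover $(y_0,g_0)$ through $m\neq m_0$, a contradiction. Hence $y_0\notin\calB+(S\setminus\{m_0\})$, so $S$ is minimal, and together with $(1)\Leftrightarrow(2)$ this closes the cycle.

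The hard part will be steps (a)--(d) of $(2)\Rightarrow(3)$, and within them the minimality verification (d): one must control the fibrewise supremum $\sup_{m}\big(u(y_0-m)+\varphi(v(m))\big)$ uniformly in the base coordinate, which is precisely what a $\varphi$-moderation is engineered to furnish. The hypothesis that $G_2$ is free of positive rank enters exactly here, guaranteeing that $\varphi$ is genuinely unbounded on $G_2$ so that points of arbitrarily large $\varphi$-value exist in each fibre; this unboundedness is what drives both the minimality in $(2)\Rightarrow(3)$ and the recovery of $\calB+S=G_1$ in $(4)\Rightarrow(2)$.
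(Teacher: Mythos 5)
Your overall architecture coincides with the paper's own proof: (1)$\Leftrightarrow$(2) is exactly Theorem \ref{Thm:MinCompInSubgp}; for (2)$\Rightarrow$(3) you form the graph $M_v$ of a moderation restricted to the minimal complement $M$, verify the complement property using only $\calB+M=G_1$ and the full fibres $\calB\times G_2\subseteq X$, and get minimality by noting that spikes plus the graph can only cover a $\varphi$-bounded part of the fibre $\{y_0\}\times G_2$ --- which is precisely the content of the paper's Lemma \ref{Lemma:ModerationImplication}; and your (4)$\Rightarrow$(2), recovering $\calB+S=G_1$ from points of large $\varphi$-value and transferring the minimality witness back to $G_1$, is the paper's argument as well. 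Your explicit remark that at least one $(u,\varphi)$-bounded spiked subset with base $\calB$ exists (namely $\calB\times G_2$) is a point the paper silently skips in (3)$\Rightarrow$(4).

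The one genuine gap is your step (a). The existence of a $\varphi$-moderation of $u$ has nothing to do with $M$ being a complement: the moderation condition demands, for \emph{every} $x_0\in G_1$, an upper bound on $\varphi\bigl(v(x)+u(x_0-x)\bigr)$ as $x$ ranges over all of $G_1$, and complementarity of $M$ imposes no control whatsoever on the values of the arbitrary function $u$ (take $\calB=\{0\}$, $M=G_1$: your suggested mechanism gives nothing, yet the full moderation property is still what is required). The paper supplies this ingredient by a separate construction (Proposition \ref{Prop:ModerationExists}; Lemma \ref{Lemma:ModerationExists} in the $\bbZ^k$ case): equip $G_1$ with a metric in which every ball is finite, and define $v(x)$ so that $-\varphi(v(x))$ dominates $\varphi\circ u$ on the ball $B(-x,\,d(x,0)+1)$; then for fixed $x_0$, all $x$ with $d(x,0)\geq d(x_0,0)$ satisfy $x_0-x\in B(-x,\,d(x,0)+1)$, hence $\varphi\bigl(v(x)+u(x_0-x)\bigr)\leq 0$, and only finitely many exceptional $x$ remain. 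The essential input is an exhaustion of $G_1$ by finite sets (finiteness of balls, available since $G_1$ is taken finitely generated in Proposition \ref{Prop:ModerationExists}), not any additive property of $M$. Without this construction, or a citation of it, statements (3) and (4) have no content and your (2)$\Rightarrow$(3) cannot begin; with it, the rest of your proposal goes through as you describe.
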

Cor. \ref{Cor:LowerWhateverYAxisRota} allows one to rotate (rational rotation) the sets in $\mathbb{Z}^{2}$ without hampering the existence of the minimal complement. Then, we show Cor. \ref{Prop:LinesNoisek1k2} which allows us to construct (and conclude) other sets admitting minimal complements [see Example \ref{Cor5.9}]. Finally, we show a more general result:
\begin{theoremIntro}[Theorem \ref{Thm5.11}]
	Let $G_1, G_2$ be subgroups of an abelian group $\calG$ such that $G_2$ is free of positive rank, $G_1\cap G_2$ is trivial. Let $\calB$ be a subset of $G_1$ with minimal complement $M$ in $G_1$. Let $u: G_1\to G_2$ be a function and 
	$$\varphi: G_2 \xra{\sim} \bbZ^{\rk G_2}$$ 
	be an isomorphism. Let $G_2'$ be a subgroup of $G_2$ of finite index and $g_2$ be an element of $G_2$. Let $X$ be a subset of $\calG$ containing $\calB G_2$ and contained in 
	$$
	\calB G_2
	\bigsqcup 
	\left(
	\bigsqcup_{g_1\in G_1\setminus \calB}
	g_1\cdot
	\left(
	\varphi^\mo \left(\bbZ^{\rk G_2}_{<\varphi(u(g_1))}
	\right)
	\right)
	\right)
	\bigsqcup 
	\left(
	\bigsqcup_{g_1\in G_1\setminus \calB}
	g_1\cdot
	\left(
	\left(
	\varphi^\mo \left(\bbZ^{\rk G_2}_{\not<\varphi(u(g_1))}
	\right)
	\right)
	\cap 
	\left(
	G_2\setminus g_2 G_2'
	\right)
	\right)
	\right).
	$$
	Then $u$ admits a $G_2'$-valued $\varphi$-moderation and the graph $M_v$ of the restriction $v'|_M$ of such a moderation $v'$ to $M$, i.e.,
	$$
	M_{v'}
	=
	\{(x,v'(x))\,|\, x\in M\}
	$$
	is a minimal complement of $X$ in $G_1\times G_2$. 
\end{theoremIntro}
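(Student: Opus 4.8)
The plan is to realize the ambient group as the internal direct sum $G_1\oplus G_2$ — legitimate since $G_1\cap G_2$ is trivial — and to coordinatize the second factor through $\varphi$, transporting the lexicographic order of $\bbZ^{\rk G_2}$ back to $G_2$; throughout I write a target as $g+k$ with $g\in G_1$, $k\in G_2$. The complement property $X+M_{v'}=G_1\times G_2$ is then immediate and uses neither the spike nor the noise: $\calB G_2\subseteq X$ is a union of full $G_2$-cosets, one over each point of $\calB$, and $\calB+M=G_1$, so given any $g+k$ I pick $b\in\calB$, $m\in M$ with $b+m=g$ and observe that $\big(b+(k-v'(m))\big)\in\calB G_2\subseteq X$ together with $m+v'(m)\in M_{v'}$ sums to $g+k$. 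Hence only the minimality of $M_{v'}$ requires the hypotheses on $u$, $\varphi$, $G_2'$ and $g_2$.

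For minimality I fix $m_0\in M$ and, invoking minimality of $M$ in $G_1$, choose $g^*\in G_1$ that is represented in $\calB+M$ solely through $m_0$, i.e.\ $g^*-m\in\calB$ forces $m=m_0$. I then hunt for an uncovered target of the shape $g^*+k$ with $k\in g_2 G_2'$. Before deleting $m_0+v'(m_0)$ the point $g^*+k$ is covered through $m_0$ via the base, since $g^*-m_0\in\calB$; I must show every other representation fails. Suppose $g^*+k=x_0+(m+v'(m))$ with $x_0\in X$ and $m\in M\setminus\{m_0\}$. Projecting to $G_1$ gives $\pi_1(x_0)=g^*-m$, which lies outside $\calB$ because $m\neq m_0$, so $x_0$ sits over a non-base point and its $G_2$-coordinate $t=k-v'(m)$ must lie in the admissible region of the displayed containment: either $\varphi(t)<\varphi(u(g^*-m))$ or $t\in G_2\setminus g_2 G_2'$. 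The $G_2'$-valuedness of $v'$ and the choice $k\in g_2 G_2'$ give $t=k-v'(m)\in g_2 G_2'$, eliminating the second alternative; choosing $k$ lexicographically large eliminates the first, so no such $x_0$ exists and $M_{v'}\setminus\{m_0+v'(m_0)\}$ is not a complement.

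It remains to produce the requisite $k$ and, beforehand, a $G_2'$-valued $\varphi$-moderation at all. The latter comes by running the construction underlying the moderation clause of Theorem \ref{Thm:LinesNoise} with the output values constrained to $G_2'$; this is possible because $G_2'$, having finite index, contains $N\bbZ^{\rk G_2}$ for some $N\geq 1$ and is therefore lexicographically cofinal, so at each stage an admissible value can still be found inside $G_2'$. The choice of $k$ is where I expect the only real difficulty, because $k$ must simultaneously satisfy $\varphi(k-v'(m))\not<\varphi(u(g^*-m))$ for every $m\in M\setminus\{m_0\}$ and lie in the fixed coset $g_2 G_2'$. The coset constraint is reconciled with lexicographic size by pumping the leading coordinate: elements of $g_2 G_2'$ of the form $\varphi^\mo(\varphi(g_2)+(cN,0,\dots,0))$ have arbitrarily large first lexicographic coordinate, and any such element lex-dominates a fixed bound once $cN$ exceeds its first coordinate. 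The crux is thus to read off from the precise definition of a $\varphi$-moderation that $\{\varphi(u(g^*-m))+\varphi(v'(m)) : m\in M\}$ has leading coordinate bounded above — this is exactly the rank-one moderation estimate applied in the top slot of the lexicographic order — after which a single $k$ of the displayed form works uniformly in $m$, completing the argument.
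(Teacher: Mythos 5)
Your proposal is correct and follows essentially the same route as the paper's proof: a witness $g^*\in G_1$ not covered by $\calB+(M\setminus\{m_0\})$, an element $k\in g_2G_2'$ chosen lexicographically above the moderation bound at $g^*$, and the three-way case analysis in which the base part is blocked by the choice of $g^*$, the spiked part below $u$ is blocked by the moderation bound, and the noise part is blocked because $k-v'(m)$ remains in the coset $g_2G_2'$ thanks to the $G_2'$-valuedness of $v'$. The only cosmetic difference is that you construct the $G_2'$-valued moderation directly by constraining its values to the lexicographically cofinal subgroup $G_2'$, whereas the paper (Proposition \ref{Prop:ModerationExists}) first builds an arbitrary moderation and then corrects it by finitely many coset representatives; both hinge on the finite index of $G_2'$ in the same way.
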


This finishes our analysis of truncated subsets and non-truncated (spiked) subsets in finitely generated abelian groups providing a comprehensive description of sets which were unknown to have minimal complements or not. The claimed partial answer to \cite[Problem 13]{NathansonAddNT4} follows.

\subsection{Plan of the paper}
The article is divided into $6$ main sections.
\begin{enumerate}
	\item Introduction --- Here we give the background, motivation and results.
	\item Generalities --- This section deals with detailed analysis of Questions \ref{nathansonprob13}(a), \ref{nathansonprob13}(b) and also of minimal complements of sets which are confined inside some proper subgroup [Theorem \ref{Thm:MinCompInSubgp}].
	\item Subsets truncated along lines --- Here we study the inexistence of minimal complements for subsets truncated along lines.
	\item Subsets containing lines --- This section introduces the concept of moderation functions and spiked subsets in free abelian groups. Theorem \ref{Thm:LinesNoise} gives a sufficient condition for existence of minimal complements.
	\item Spiked subsets in general abelian groups --- This section continues with the study of spiked subsets and extends the concept to general abelian groups. It also contains the necessary and sufficient condition for existence of minimal complements of spiked subsets [Theorem \ref{Thm:NecessarySuffi}] and the more general Theorem \ref{Thm5.11}.
	\item Concluding remarks --- We conclude with several new constructions of sets in higher rank abelian groups having minimal complements.
\end{enumerate}

\vspace*{5mm}
\section{Generalities}
\label{sec2}
We begin Section \ref{sec2} by stating a well known and useful lemma about the behaviour of minimal complements when the set is translated by an element of the group. The proof is elementary. For completeness sake, we furnish the proof here.

\begin{lemma}[\cite{NathansonAddNT4}]
	Minimal complements are preserved under translations.
\end{lemma}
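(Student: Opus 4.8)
The plan is to reduce everything to a single structural observation: in a group, translation by a fixed element is a bijection of $G$ onto itself that is compatible with the formation of sumsets. Concretely, for all subsets $A, B \subseteq G$ and all $g, h \in G$ one has $(A + g) + (B + h) = (A + B) + (g + h)$, and the map $x \mapsto x + c$ is a bijection $G \to G$ for every $c \in G$. I would first pin down the precise meaning of the statement: if $W'$ is a minimal complement to $W$, then for all $g, h \in G$ the translate $W' + g$ is a minimal complement to the translate $W + h$; equivalently, both the complement relation and its minimality are invariant under translating the set and the complement independently. This formulation simultaneously covers the special cases of translating only $W$ or only $W'$.

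First I would verify that the complement relation survives. Since $(W + h) + (W' + g) = (W + W') + (h + g) = G + (h + g)$, and translation by $h + g$ maps $G$ bijectively onto $G$, we conclude $(W + h) + (W' + g) = G$; thus $W' + g$ is a complement to $W + h$. Next I would check that minimality is preserved. The key small step is that translation, being injective, commutes with deletion of a point: for any $w' \in W'$ one has $(W' + g) \setminus \{w' + g\} = (W' \setminus \{w'\}) + g$. Applying the sumset identity once more gives
$$(W + h) + \big((W' + g)\setminus\{w'+g\}\big) = \big(W + (W'\setminus\{w'\})\big) + (h + g).$$
Since translation by $h + g$ is a bijection of $G$, the left-hand side equals $G$ if and only if $W + (W' \setminus \{w'\}) = G$. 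By the minimality of $W'$ as a complement to $W$, the latter fails for every $w' \in W'$, so the former fails for every element $w' + g$ of $W' + g$. As $w'$ ranges over $W'$, the point $w' + g$ ranges over all of $W' + g$, which shows $W' + g$ is a minimal complement to $W + h$.

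The argument is entirely formal, so there is no genuine obstacle; I expect the whole proof to be a few lines. The only point requiring a moment's care is the interaction between set-theoretic deletion and translation, which hinges on injectivity of the translation map — this is precisely what prevents the deleted element from accidentally coinciding with another element of the translated set, and it is the one place where the full group structure (rather than mere closure under addition) is used. The converse implication is obtained by translating back by $-g$ and $-h$, so the minimal-complement property is in fact an equivalence under translation.
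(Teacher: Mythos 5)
Your proof is correct and follows essentially the same route as the paper's: verify the translate is still a complement via compatibility of sumsets with translation, then transfer minimality using the fact that deleting a point commutes with translation (the paper writes $Mg\setminus\{mg\} = (M\setminus\{m\})g$ implicitly, exactly your injectivity observation) and translating back. The only difference is cosmetic --- you translate $W$ and $W'$ independently while the paper translates only the complement --- which is a harmless strengthening in the abelian setting.
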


\begin{proof}
If $M$ is a minimal complement of a subset $W$ of a group $G$, then for any $g\in G$, $Mg$ is a complement to $W$ since $WMg = (WM) g = Gg= G$. Moreover, $Mg$ is also a minimal complement to $W$. Otherwise, for some $m\in M$, $Mg\setminus \{mg\} $ would be a complement to $W$, which would imply $G = W (Mg\setminus \{mg\}) = W( (M\setminus \{m\} )g)$, implying $G = W (M\setminus\{m\}$, which is absurd. 
\end{proof}

Now, we discuss the possible answers to Questions \ref{nathansonprob13}(a), (b) (see Table \ref{Table1}). We give an example in Lemma \ref{Lemma: YY} indicating that Question \ref{nathansonprob13}(a),(b) could have answers in affirmative and negative respectively (for the same subset $W$ of $G = \bbZ^2$).

\begin{center}
\begin{longtable}{|p{0.1\textwidth} p{0.1\textwidth} | p{0.7\textwidth} |}
\hline
\multicolumn{2}{|c|}{Answers to } & Examples and Remarks \\
Question \ref{nathansonprob13}(a) & Question \ref{nathansonprob13}(b) & \\
\hline
Yes & No & Consider the set $\bbZ^2 \setminus \{ (0,n)\,|\, n\in \bbN\}$. Each of the sets $\{(0,0), (1,0)\}$, $\{ (0,n)\,|\, n\in \bbN\}$ is a complement to it. The first is a minimal complement. However, the second i.e., $\{ (0,n)\,|\, n\in \bbN\}$ does not contain any minimal complement (see Lemma \ref{Lemma: YY}).\\ 
\hline
Yes & \hspace*{-1.1cm} $\Longleftarrow\,\,$  Yes &
Consider the case when $W$ is an infinite subgroup of $G$ (see \cite[Proposition 4.1]{MinComp1}). 
Moreover, if Question \ref{nathansonprob13}(b) has an answer in affirmative, then the Question \ref{nathansonprob13}(a) necessarily has an answer in affirmative. \\ 
\hline
No & \hspace*{-1.1cm} $\Longrightarrow\,\,$  No & Consider the subset of $\bbZ^2$ consisting of points lying below the parabola $Y=X^2$ (see Corollary \ref{Cor:LowerParabola}, \ref{Cor:LowerWhatever}). Moreover, if Question \ref{nathansonprob13}(a) has an answer in negative, then the Question \ref{nathansonprob13}(b) necessarily has an answer in negative. \\ 
\hline
No & Yes & It is clear that such a situation does not arise.\\
\hline
\caption{Question \ref{nathansonprob13}}
\label{Table1}
\end{longtable}
\end{center}

\vspace*{-10mm}
\begin{lemma}
\label{Lemma: YY}
Let $W$ denote the subset 
$$W := \bbZ^2\setminus \{ (0,n)\,|\, n\in \bbN\}\subset \bbZ^2.$$ Any subset $S$ containing two points of $\bbZ^2$ with distinct $x$-coordinates is a complement to $W$ and a minimal complement to $W$. Any infinite subset 
$$C\subseteq \{(0,n)\,|\, n\in \bbN\}$$ is a complement to $W$. Consequently, $C$ is not a minimal complement to $W$. 
\end{lemma}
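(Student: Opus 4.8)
The plan is to track, for each candidate summand, exactly which target points of $\bbZ^2$ it fails to reach; the only obstruction is the forbidden ray $R:=\{(0,n)\mid n\in\bbN\}$ that is deleted to build $W$. The one computation I would record first is that for a single point $p=(a,b)$ one has $W+\{p\}=\bbZ^2\setminus(R+p)$, since translation by $p$ is a bijection of $\bbZ^2$; thus $W+\{p\}$ omits precisely the vertical ray $R+p=\{(a,n+b)\mid n\in\bbN\}$, and in particular is a proper subset of $\bbZ^2$. Everything below is bookkeeping around this formula.

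\textbf{The complement property and minimality of a two-point set.} Given $S$ containing two points $p_1=(a_1,b_1)$ and $p_2=(a_2,b_2)$ with $a_1\neq a_2$, a target $t=(x,y)$ can fail to lie in $W+\{p_i\}$ only if $t\in R+p_i$, i.e.\ only if $x=a_i$. Since $a_1\neq a_2$, these two failures cannot occur simultaneously, so $t\in(W+\{p_1\})\cup(W+\{p_2\})\subseteq W+S$; hence $W+S=\bbZ^2$ and $S$ is a complement. For minimality one works with the two-point set $S=\{p_1,p_2\}$ (a strictly larger $S$ is never minimal, as deleting a third point leaves a pair with distinct $x$-coordinates, still a complement). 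Here minimality reduces to the two single-element deletions, and by the displayed formula each of $W+\{p_1\}$, $W+\{p_2\}$ omits a nonempty vertical ray, hence is proper; so neither singleton is a complement and $S$ is minimal. This also recovers the table entry that $\{(0,0),(1,0)\}$ is a minimal complement.

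\textbf{An infinite $C\subseteq R$ is a complement but not minimal.} Fix an infinite $C\subseteq R$ and a target $t=(x,y)$, and split into cases. If $x\neq 0$, then for any $(0,n)\in C$ the difference $t-(0,n)=(x,y-n)$ has nonzero first coordinate and so lies in $W$, giving $t\in W+C$. If $x=0$, I would use that $C$ is infinite, hence unbounded in its $y$-coordinate, to pick $(0,n)\in C$ with $n>y$; then $y-n<0\notin\bbN$, so $(0,y-n)\in W$ and $t=(0,y-n)+(0,n)\in W+C$. Thus $W+C=\bbZ^2$. The crucial observation is that this argument used \emph{only} the infinitude of $C$, so deleting any single element of $C$ again yields an infinite subset of $R$, which is still a complement. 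Hence $C$ has a proper subset that is a complement and is therefore not minimal.

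There is no genuine analytic obstacle in this lemma; the content is entirely the case analysis of which targets a given summand covers. The one point to watch is the on-axis case $x=0$ for $C$, where one must invoke \emph{unboundedness} of the $y$-coordinates of $C$ rather than mere nonemptiness, together with the sign fact $y-n\notin\bbN$ for $n>y$ (valid whether or not $0\in\bbN$). It is precisely this unboundedness feature that makes every deletion harmless and so forces non-minimality, which is what underlies the later assertion that $C$ contains no minimal complement at all.
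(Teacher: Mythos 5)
Your proof is correct and takes essentially the same route as the paper's: both arguments rest on the observation that a translate $W+p$ omits exactly the vertical ray over a single $x$-coordinate (so two points with distinct $x$-coordinates jointly cover $\bbZ^2$, while each singleton fails), and both prove that an infinite $C\subseteq\{(0,n)\,|\,n\in\bbN\}$ is a complement using unboundedness of its $y$-coordinates, a property inherited by every infinite subset, whence non-minimality. The only cosmetic difference is that the paper checks $C$ is a complement by reducing to the half-plane $\{(x,y)\in\bbZ^2\,|\,y<0\}\subseteq W$, whereas you carry out the two-case analysis on targets directly; the substance is identical.
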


\begin{proof}
Let $(a,b), (c, d)$ be two points of $\bbZ^2$ with $a\neq c$. Note that $W$ contains the line $\calL$ defined by $X=c-a$. Then $(a,b) + \calL$ is equal to the line $X=c$, and is contained in $(a, b) + W$. Since $W$ contains the complement of the line $X=0$ in $\bbZ^2$, the set $(c,d)+W$ contains the complement of the line $X=c$ in $\bbZ^2$. Hence $\{(a,b), (c,d)\}$ is a complement to $W$. Since $W + (a,b)$ (resp. $W+(c,d)$) does not contain the point $(a,b+1)$ (resp. $(c, d+1)$), it follows that $\{(a,b), (c,d)\}$ is a minimal complement to $W$. 

Since any infinite subset $C$ of $\{(0,n)\,|\, n\in \bbN\}$ is a complement to $\{(x,y)\in \bbZ^2\,|\, y < 0\}$, the set $C$ also a minimal complement to $W$. So any infinite subset of $C$ is also a complement to $W$, and hence $C$ is not a minimal complement to $W$.
\end{proof}

\subsection{Minimal complements of confined subsets} 
A natural question is to ask what happens to the minimal complement of a set when the set itself is confined inside some proper subgroup of a larger group. It will be a satisfactory phenomenon if the minimal complement does not ``escape" in the sense 
that if we consider a subset of $G$ which is contained in a proper subgroup, then the existence of its minimal complement in $G$ is equivalent to the existence of its minimal complement in some (or any) proper subgroup which contains the set. We show that this is indeed the case.

\begin{theorem}
\label{Thm:MinCompInSubgp}
Let $G$ be a group and $W$ be a nonempty subset of $G$. Then the following statements are equivalent.  
\begin{enumerate}
\item $W$ admits a minimal complement in $G$.
\item $W$ admits a minimal complement in $H$ for any subgroup $H$ of $G$ containing $W$. 
\item $W$ admits a minimal complement in $H$ for some subgroup $H$ of $G$ containing $W$. 
\end{enumerate}
\end{theorem}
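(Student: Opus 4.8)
The plan is to establish the cycle of implications $(1)\Rightarrow(2)\Rightarrow(3)\Rightarrow(1)$, with the entire argument resting on a single structural observation about how complements interact with the coset decomposition of $G$ by a subgroup $H$ containing $W$. Accordingly, I would first fix an arbitrary subgroup $H$ with $W\subseteq H$ and decompose $G$ into right cosets $G=\bigsqcup_i Hg_i$ (choosing representatives $g_i$). Since $W\subseteq H$, one checks $WH=H$, and hence left multiplication by $W$ preserves each coset: $W\cdot Hg_i=Hg_i$. Consequently, for any subset $W'\subseteq G$, writing $W'_i=W'\cap Hg_i$ and $S_i=W'_i g_i^{-1}\subseteq H$, we get $W W'=\bigsqcup_i (WS_i)g_i$ with each $(WS_i)g_i\subseteq Hg_i$, so the union is disjoint across cosets. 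Therefore $W'$ is a complement to $W$ in $G$ if and only if $WS_i=H$ for every $i$, i.e. every $S_i$ is a complement to $W$ in $H$; in particular each $W'_i$ must be nonempty.

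The next step is to verify that minimality transfers coset-by-coset. Because deleting a single point $w'=sg_i$ from $W'$ affects only the $i$-th coset, the product $W(W'\setminus\{w'\})$ fails to exhaust $G$ precisely when $W(S_i\setminus\{s\})g_i\subsetneq Hg_i$, that is, when $W(S_i\setminus\{s\})\subsetneq H$. Combining this with the complement characterization, $W'$ is a minimal complement to $W$ in $G$ if and only if each $S_i$ is a minimal complement to $W$ in $H$. This equivalence is the engine of the proof.

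With this observation in hand the three implications are short. For $(1)\Rightarrow(2)$, given a minimal complement $M$ of $W$ in $G$ and any subgroup $H\supseteq W$, each piece $S_i=(M\cap Hg_i)g_i^{-1}$ is a minimal complement to $W$ in $H$, and at least one such $S_i$ exists because every $M\cap Hg_i$ is forced to be nonempty (otherwise $Hg_i$ could not be covered). For $(2)\Rightarrow(3)$, one simply takes $H=G$, which contains $W$, so the existence assertion is immediate. For $(3)\Rightarrow(1)$, given a minimal complement $S$ of $W$ in some subgroup $H\supseteq W$, I would set $M=\bigcup_i Sg_i$; here every coset piece equals $S$, so by the observation $M$ is a minimal complement to $W$ in $G$.

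The genuinely routine parts are the disjointness bookkeeping across cosets and the identity $WH=H$. The one place demanding care is the minimality transfer in the possibly non-abelian setting: I must consistently use \emph{right} cosets $Hg_i$ together with \emph{left} multiplication by $W$, so that removing a single element of $W'$ perturbs exactly one coset and leaves the others untouched. I expect this orientation bookkeeping to be the main, though mild, obstacle; once it is pinned down, all three implications follow formally from the coset-wise characterization of (minimal) complements.
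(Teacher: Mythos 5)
Your proposal is correct and follows essentially the same route as the paper's proof: both arguments decompose $G$ into right cosets of $H$, use $W\subseteq H$ to confine the product $W W'$ coset-by-coset, extract $M\cap H$ (up to translation by $g_i^{-1}$) for the implication $(1)\Rightarrow(2)$, and build $\bigcup_i S g_i$ from coset representatives for $(3)\Rightarrow(1)$. The only difference is organizational --- you package the coset-wise transfer of complementarity and minimality into a single if-and-only-if lemma from which all implications follow, whereas the paper verifies the two nontrivial implications separately.
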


\begin{proof}
Let $M$ be a minimal complement of $W$ in $G$. Let $H$ be a subgroup of $G$ containing $W$. Let $\{M_\lambda\}_{\lambda\in \Lambda}$ denote the equivalence classes of $M$ with respect to the equivalence relation defined by 
$$m_1\sim m_2\Longleftrightarrow m_1m_2^\mo \in H \text{ for elements } m_1, m_2\in M.$$ Note that $G$ is equal to the disjoint union $\sqcup_{\lambda\in \Lambda} W M_\lambda$. Let $\lambda_0$ be the unique element of $\Lambda$ such that $M_{\lambda_0}\subseteq H$, i.e., $$M_{\lambda_0} = M\cap H.$$ Note that it is a minimal complement to $W$ in $H$. Hence the second statement holds. 

It remains to prove that the third statement implies the first statement. Assume that the third statement holds, i.e., for some subgroup $H$ of $G$ containing $W$, the set $W$ admits a minimal complement $M$ in $H$. Let $\{g_\lambda\}_{\lambda\in \Lambda}$ be a set of right coset representatives of $H$ in $G$. Note that $\cup_{\lambda\in \Lambda} M g_\lambda$ is a complement of $W$ in $G$. Since $G$ is the disjoint union of the subsets $WMg_\lambda$ and $M$ is a minimal complement to $W$ in $H$, it follows that $\cup_{\lambda\in \Lambda} M g_\lambda$ is a minimal complement of $W$ in $G$.
\end{proof}

%\vspace*{5mm}
\section{Subsets truncated along lines}
In this section, we study subsets of $\mathbb{Z}^{2}$ which are bounded by graphs of functions. 
\begin{proposition}
\label{Prop:Truncated2D}
Let $W$ be a subset of $\bbZ^2$ of the form 
$$W:=\cup_{a\in A} \{a\}\times (-\infty, u(a))$$ where $A$ is a nonempty subset of $\bbZ$ and $u:A\to \bbZ$ denotes a function. Then $W$ does not admit any minimal complement in $\bbZ^2$.
Similarly, no subset of $\bbZ^2$ of the form $$\cup_{a\in A} \{a\}\times (u(a), \infty)$$ admits a minimal complement in $\bbZ^2$ where $A$ is a nonempty subset of $\bbZ$ and $u:A\to \bbZ$ denotes a function. 
\end{proposition}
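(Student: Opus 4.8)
The plan is to prove the stronger statement that for \emph{every} complement $M$ of $W$ and every element $m'\in M$, the set $M\setminus\{m'\}$ is still a complement; this immediately shows that no complement can be minimal, and hence that $W$ admits none. The whole argument proceeds column by column, exploiting that $W$ is downward-closed in each of its columns.

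First I would record the key observation translating the covering condition into a statement about suprema. Fix a target column $c\in\bbZ$. For $m=(m_1,m_2)\in M$, the translate $W+m$ meets column $c$ exactly when $c-m_1\in A$, and in that case it contributes precisely the downward ray $\{(c,y)\,|\,y< u(c-m_1)+m_2\}$. Consequently $W+M$ covers the whole of column $c$ if and only if the height set
$$S_c(M):=\{u(c-m_1)+m_2\,|\,(m_1,m_2)\in M,\ c-m_1\in A\}$$
is unbounded above in $\bbZ$. Thus $M$ is a complement to $W$ if and only if $S_c(M)$ is unbounded above for every $c\in\bbZ$. This characterization is the technical heart of the argument, and making it rigorous — in particular correctly accounting for the possibility that several elements of $A$ (or of $M$) contribute to the same column — is the step I expect to require the most care.

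Granting the characterization, the conclusion is immediate. Removing a single $m'=(m_1',m_2')$ from $M$ deletes from each $S_c(M)$ at most the one finite value $u(c-m_1')+m_2'$; since $S_c(M)$ is unbounded above, $S_c(M\setminus\{m'\})$ remains unbounded above for every $c$. Hence $M\setminus\{m'\}$ is still a complement, provided it is nonempty. Nonemptiness is automatic, since a singleton $\{m'\}$ makes each $S_c$ a set with at most one element, which is bounded above; by the characterization a singleton is never a complement, so every complement has at least two elements. This shows that $M$ is not minimal, for every complement $M$, and therefore $W$ has no minimal complement.

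Finally, the ``upper'' variant follows by symmetry. The reflection $\sigma:\bbZ^2\to\bbZ^2$, $\sigma(x,y)=(x,-y)$, is a group automorphism carrying a set of the form $\cup_{a\in A}\{a\}\times(u(a),\infty)$ to one of the form $\cup_{a\in A}\{a\}\times(-\infty,-u(a))$, which is lower-truncated. As with the translation lemma above, one checks directly that a group automorphism sends minimal complements to minimal complements, so the upper-truncated set admits a minimal complement if and only if its image under $\sigma$ does; by the first part it does not.
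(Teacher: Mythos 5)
Your proof is correct and follows essentially the same route as the paper: both arguments show that \emph{every} complement remains a complement after deleting any single element, by exploiting that each translate of $W$ meets a fixed column in a downward ray, so that covering that column forces unboundedly high contributions which no single element can exhaust; both also reduce the upper-truncated case to the lower one via the automorphism $(x,y)\mapsto(x,-y)$. Your explicit characterization ($M$ is a complement iff each height set $S_c(M)$ is unbounded above) is just a cleaner packaging of the paper's step that the point $c+w+(0,n_w)$ lying just above $c+W$ must be covered by a different translate $c_w+W$, which by downward-closedness then covers $c+w$ as well.
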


\begin{proof}
Let $C$ be a complement of $W$ in $\bbZ^2$. Fix an element $c$ of $C$. Since $W$ is strictly contained in $\bbZ^2$, any of its translates by an element of $\bbZ^2$ is also strictly contained in $\bbZ^2$. So  $C$ contains at least two elements. Hence $C\setminus\{c\}$ is nonempty. Note that for each $w$ in $W$, there exists a positive integer $n_w$ such that 
$$w+(0,n_w)\not\subseteq W.$$ Then, $$c + w + (0,n_w)\not\subseteq c+W$$ for any $w\in W$. So for each $w\in W$, there exists an element $c_w\in C$ such that $$c+ w + (0,n_w)\subseteq c_w + W.$$ Since for any positive integer $n$, the set $W$ contains $w - (0,n)$ for each $w\in W$, it follows that $$c+W\subseteq c_w + W.$$ Consequently, $C\setminus\{c\}$ is also a complement to $W$. Hence $W$ admits no minimal complement. 

Note that a subset of $\bbZ^2$ admits a minimal complement in $\bbZ^2$ if and only if its image under any automorphism of $\bbZ^2$ admits a minimal complement. Moreover, $W$ is the union of certain lines truncated above if and only if $\sigma(W)$ is the union of certain lines truncated below where $\sigma$ denotes the automorphism of $\bbZ^2$ given by 
$$\sigma: (x,y)\mapsto (x, -y).$$ Hence the second statement follows from the first statement.
\end{proof}
%\vspace*{2mm}

From Proposition \ref{Prop:Truncated2D}, we obtain the following corollary. 

\begin{corollary}
\label{Cor:LowerParabola}
Let $W_+$ (resp. $W_-$) denote the subset of $\bbZ^2$ consisting of points lying above (resp. below) the parabola $Y=X^2$, i.e.,
\begin{align*}
	W_+ & = 
	\{
	(x, y)\in \bbZ^2 \,| \, y > x^2
	\}\\
	W_- & = 
	\{
	(x, y)\in \bbZ^2 \,| \, y < x^2
	\}
\end{align*}
None of the subsets $W_+, W_-$ admit any minimal complement in $\bbZ^2$  \textnormal{[see Fig.\ref{Fig1}]}. 
\end{corollary}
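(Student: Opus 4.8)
The plan is to observe that both $W_+$ and $W_-$ are, up to relabeling, precisely the sets to which Proposition \ref{Prop:Truncated2D} applies, with base set $A=\bbZ$ and profile function $u(x)=x^2$. Once this identification is made, the corollary is immediate, so the only real work is to verify that the vertical slices of the parabola-bounded regions match the templates $\cup_{a\in A}\{a\}\times(-\infty,u(a))$ and $\cup_{a\in A}\{a\}\times(u(a),\infty)$.

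First I would slice $W_-$ along the vertical lines $X=x$. For each fixed $x\in\bbZ$, the defining condition $y<x^2$ says the fiber of $W_-$ over $x$ is exactly the set of integers strictly below $x^2$, i.e. $\{x\}\times(-\infty,x^2)$. Taking the union over all $x\in\bbZ$ yields
$$
W_-=\bigcup_{x\in\bbZ}\{x\}\times(-\infty,x^2),
$$
which is of the first form in Proposition \ref{Prop:Truncated2D} with $A=\bbZ$ and $u(x)=x^2$. Hence $W_-$ admits no minimal complement in $\bbZ^2$. Next, the identical slicing of $W_+$ gives, for each $x$, the fiber $\{x\}\times(x^2,\infty)$, so that
$$
W_+=\bigcup_{x\in\bbZ}\{x\}\times(x^2,\infty),
$$
matching the second form of Proposition \ref{Prop:Truncated2D} with the same $A$ and $u$. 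Therefore $W_+$ admits no minimal complement either, and the corollary follows.

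There is essentially no obstacle to overcome: the content of the corollary is entirely the recognition that a curved boundary (the parabola) is irrelevant to the argument, since Proposition \ref{Prop:Truncated2D} already allows an arbitrary function $u$ as the truncation profile. The only point meriting a line of care is the translation of the strict inequalities $y<x^2$ and $y>x^2$ into the integer half-line notation $(-\infty,x^2)$ and $(x^2,\infty)$, which is routine.
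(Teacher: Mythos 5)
Your proof is correct and is essentially the paper's own argument: the paper also deduces Corollary \ref{Cor:LowerParabola} directly from Proposition \ref{Prop:Truncated2D} by writing $W_\pm$ as unions of truncated vertical lines with $A=\bbZ$ and profile $x\mapsto x^2$ (the paper's written-out version, for the more general Corollary \ref{Cor:LowerWhatever}, only adds floor/ceiling adjustments because there $f$ is real-valued, which is unnecessary here since $x^2$ is an integer). Nothing is missing.
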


\begin{figure}[h]
	\centering
	\begin{subfigure}{.5\textwidth}
		\centering
		\includegraphics[scale=0.08]{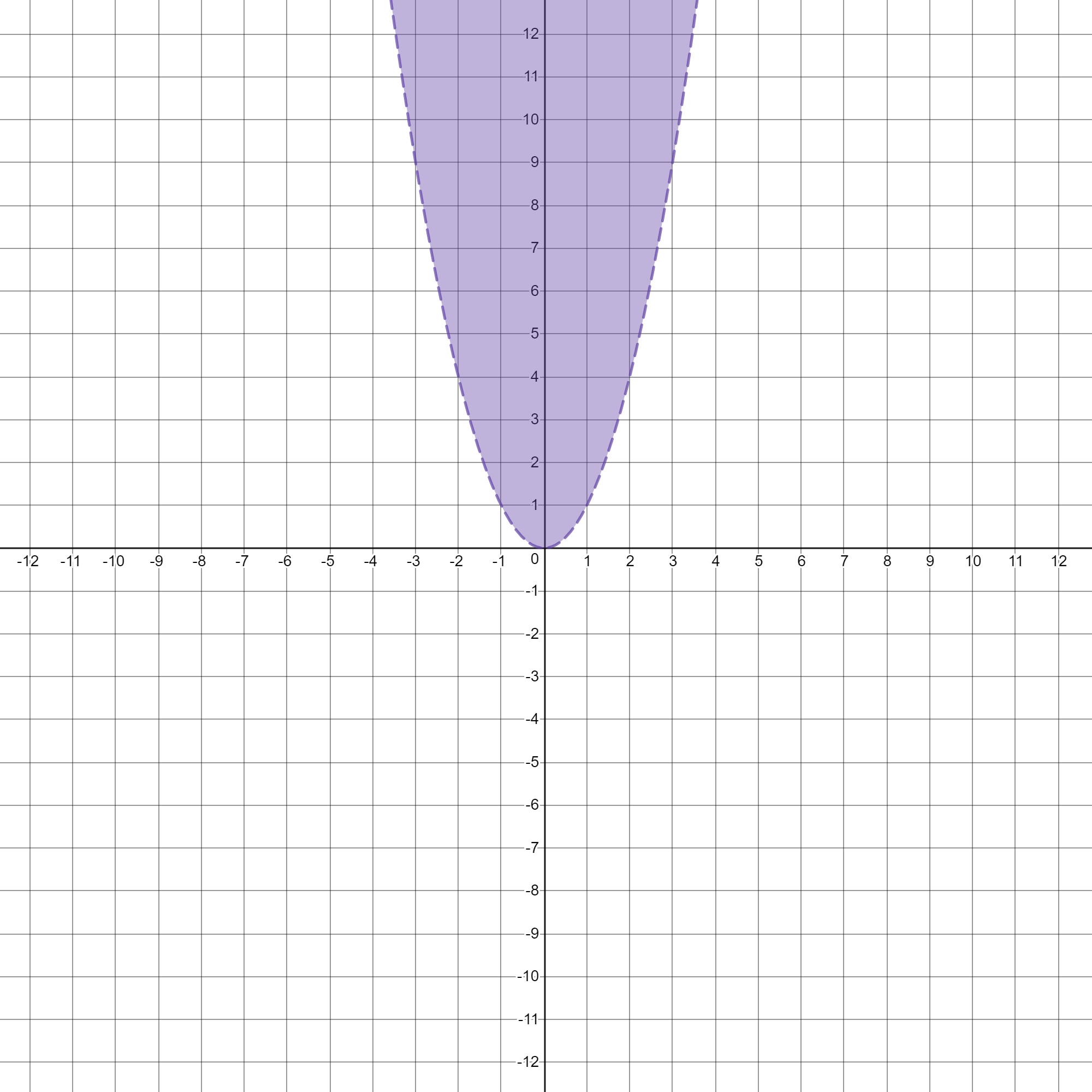}
		\caption{$W_{+}$}
		\label{}
	\end{subfigure}%
	\begin{subfigure}{.5\textwidth}
		\centering
		\includegraphics[scale=0.08]{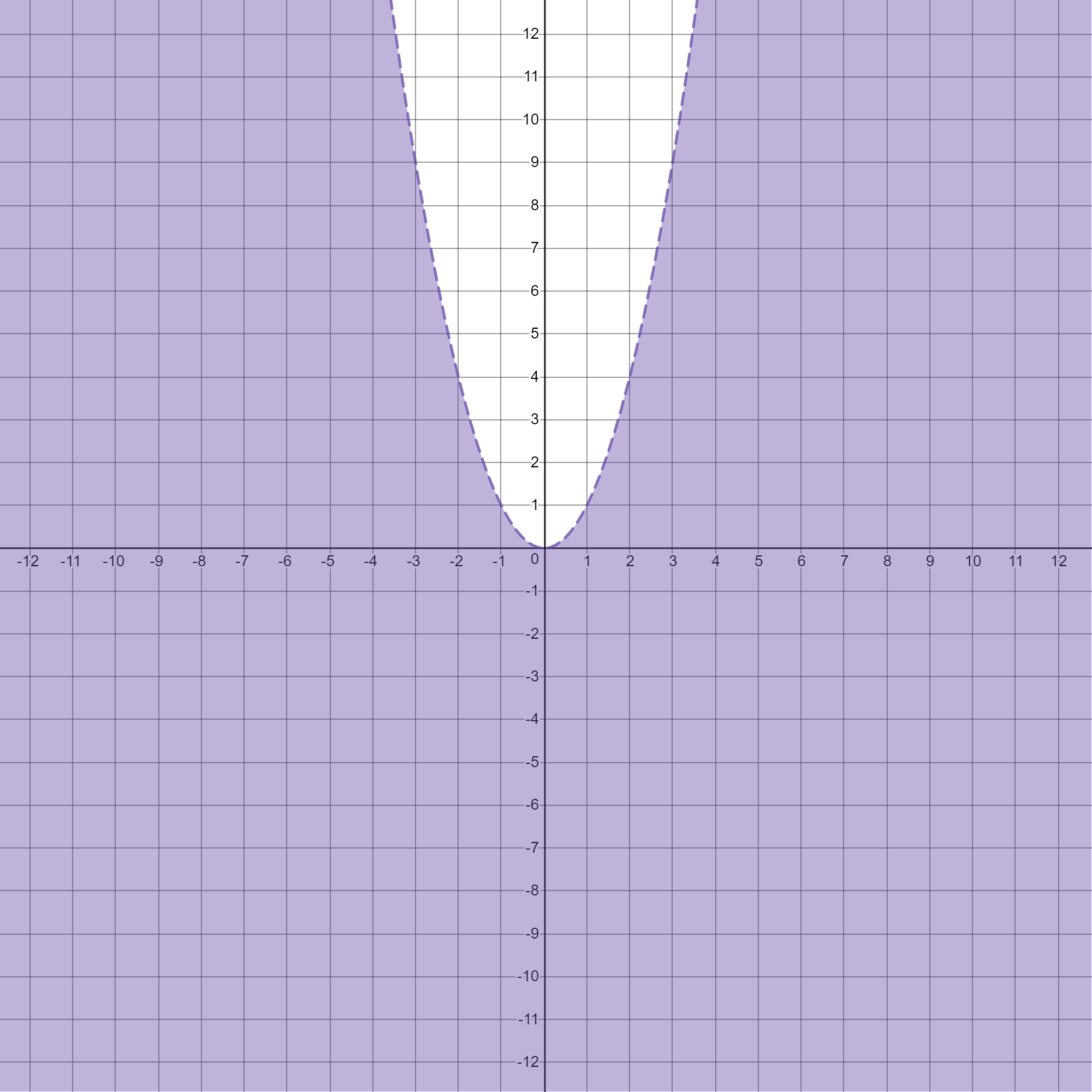}
		\caption{$W_{-}$}
		\label{}
	\end{subfigure}
	\caption{Cor. \ref{Cor:LowerParabola}. $W_{+}$ and $W_{-}$ don't admit any minimal complement in $\mathbb{Z}^{2}$}
	\label{Fig1}
\end{figure}

More generally, we have the following result. 

\begin{corollary}
\label{Cor:LowerWhatever}
Let $f:\bbZ \to \bbR$ be a function and let $W_\pm$ denote the subset of $\bbZ^2$ defined by
\begin{align*}
W_+ & = 
\{
(x, y)\in \bbZ^2 \,| \, y > f(x)
\}\\
W_- & = 
\{
(x, y)\in \bbZ^2 \,| \, y < f(x)
\}
\end{align*}
None of the subsets $W_+, W_-$ admit any minimal complement in $\bbZ^2$ \textnormal{[see Fig.\ref{Fig2} and Fig. \ref{Fig3}]}.
\end{corollary}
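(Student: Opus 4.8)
The plan is to deduce this directly from Proposition \ref{Prop:Truncated2D} by replacing the real-valued function $f$ with an integer-valued rounding of it, observing that rounding does not change the vertical columns of $W_\pm$ once we restrict to integer points. The point is that each column of $W_-$ (resp. $W_+$) is a downward (resp. upward) ray in $\bbZ$, and the location of its cutoff is recorded by an \emph{integer} even when the bounding value $f(x)$ is not.

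First I would treat $W_-$. Fix $x\in\bbZ$ and consider the column $\{y\in\bbZ\mid y<f(x)\}$. I claim this column equals $\{y\in\bbZ\mid y<\lceil f(x)\rceil\}$: if $f(x)\in\bbZ$, then the largest integer strictly below $f(x)$ is $f(x)-1=\lceil f(x)\rceil-1$, while if $f(x)\notin\bbZ$, the largest such integer is $\lfloor f(x)\rfloor=\lceil f(x)\rceil-1$; in both cases the column is the ray $(-\infty,\lceil f(x)\rceil)$. Hence, setting $u(x):=\lceil f(x)\rceil$, we obtain
$$W_-=\bigcup_{x\in\bbZ}\{x\}\times(-\infty,u(x)),$$
which is exactly a set of the first form in Proposition \ref{Prop:Truncated2D} with $A=\bbZ$ and $u:\bbZ\to\bbZ$. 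That proposition then yields the nonexistence of a minimal complement for $W_-$ in $\bbZ^2$.

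Next I would treat $W_+$ symmetrically. For fixed $x$, the column $\{y\in\bbZ\mid y>f(x)\}$ equals $\{y\in\bbZ\mid y>\lfloor f(x)\rfloor\}$, since the smallest integer strictly above $f(x)$ is $\lfloor f(x)\rfloor+1$ whether or not $f(x)$ is an integer. Taking $u(x):=\lfloor f(x)\rfloor$ exhibits
$$W_+=\bigcup_{x\in\bbZ}\{x\}\times(u(x),\infty)$$
as a set of the second form in Proposition \ref{Prop:Truncated2D}, and the conclusion follows once more.

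The only point requiring any care is the case distinction in the rounding step according to whether $f(x)$ is an integer, since the floor and ceiling behave slightly differently at integer values; but this is elementary and not a genuine obstacle. Everything else is an immediate invocation of Proposition \ref{Prop:Truncated2D}, so no new ideas beyond this rounding reduction are needed.
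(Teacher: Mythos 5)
Your proof is correct and is essentially the paper's own argument: both reduce to Proposition \ref{Prop:Truncated2D} by rounding $f$ to integer cutoffs, using $\lfloor f(x)\rfloor$ for $W_+$ and $\lceil f(x)\rceil$ for $W_-$ (the paper writes the latter in the equivalent form $\lfloor f(x)\rfloor + \lceil f(x)-\lfloor f(x)\rfloor\rceil$). Your explicit case check at integer values of $f(x)$ is exactly the point the paper's formula is designed to handle.
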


\begin{figure}[h]
	\centering
	\begin{subfigure}{.5\textwidth}
		\centering
		\includegraphics[scale=0.08]{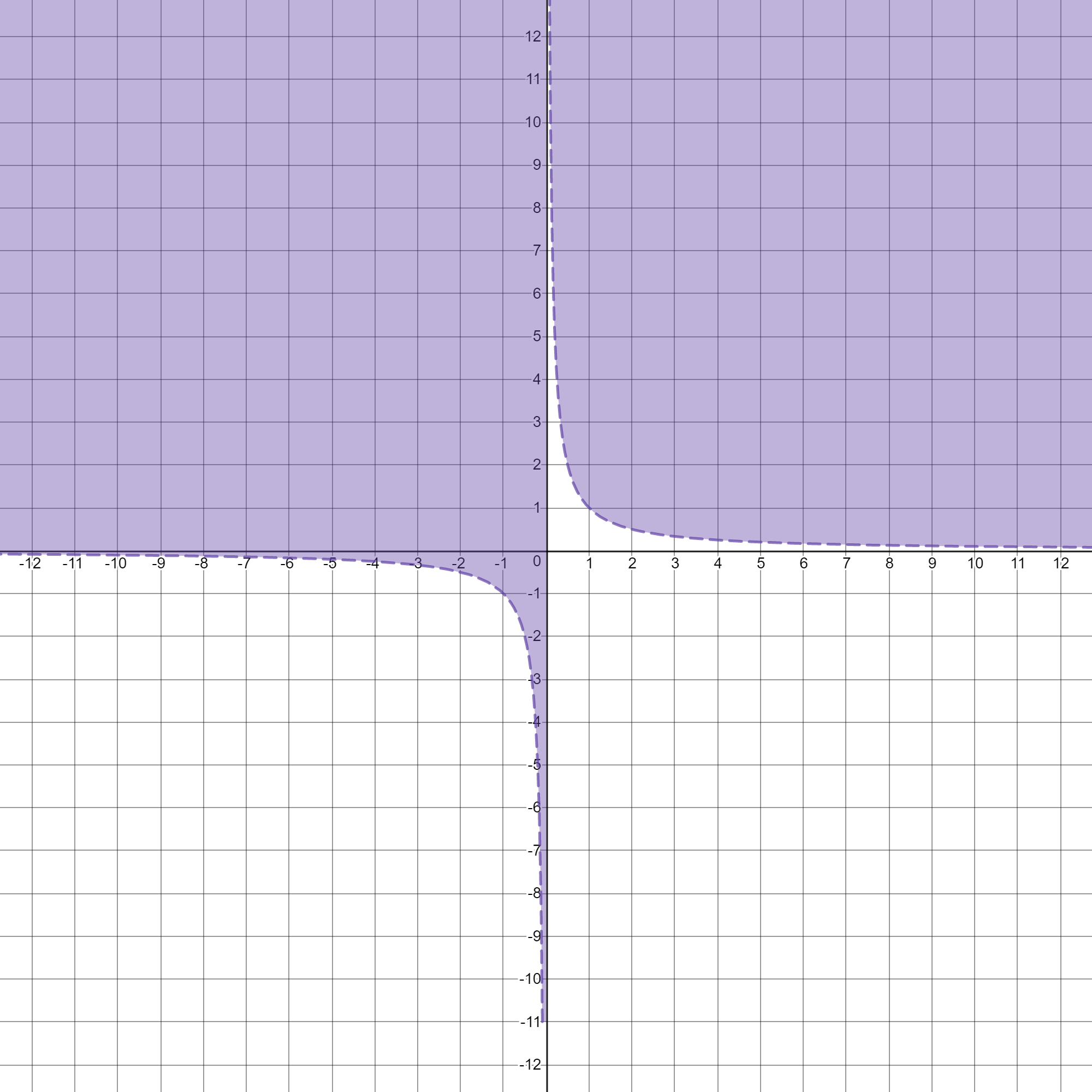}
		\caption{$W_{+}: f(x) = x^{-1}, x\neq 0, f(0)=t$}
		\label{}
	\end{subfigure}%
	\begin{subfigure}{.5\textwidth}
		\centering
		\includegraphics[scale=0.08]{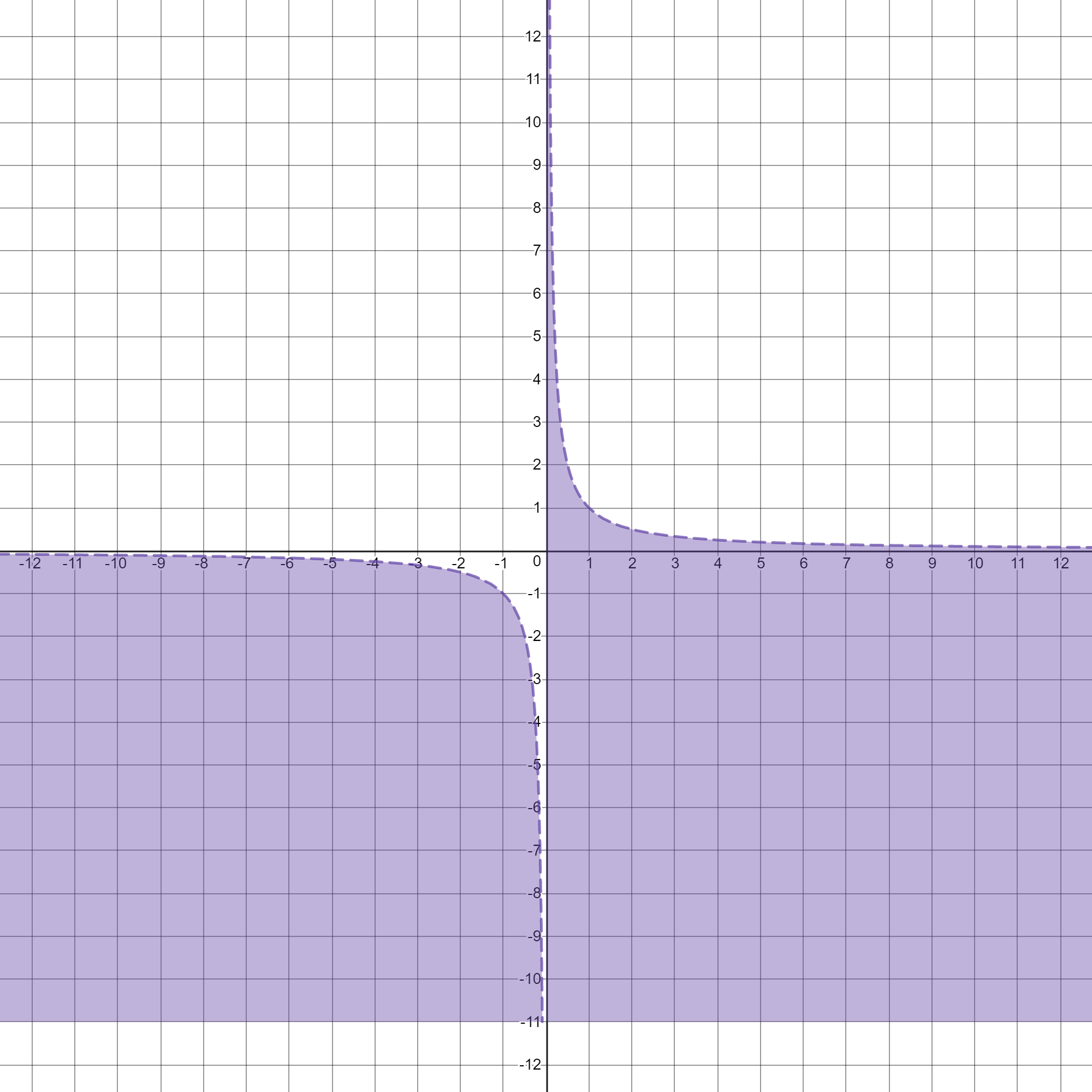}
		\caption{$W_{-}: f(x) = x^{-1}, x\neq 0, f(0)=t$}
		\label{}
	\end{subfigure}
	\caption{Cor. \ref{Cor:LowerWhatever}. $W_{+}$ and $W_{-}$ with a parameter $t$. For any $t\in \mathbb{Z}$, they don't admit any minimal complement in $\mathbb{Z}^{2}$. In the figure, graph of $t=-11$ is shown.}
	\label{Fig2}
\end{figure}

\begin{figure}[h]
	\centering
	\begin{subfigure}{.5\textwidth}
		\centering
		\includegraphics[scale=0.08]{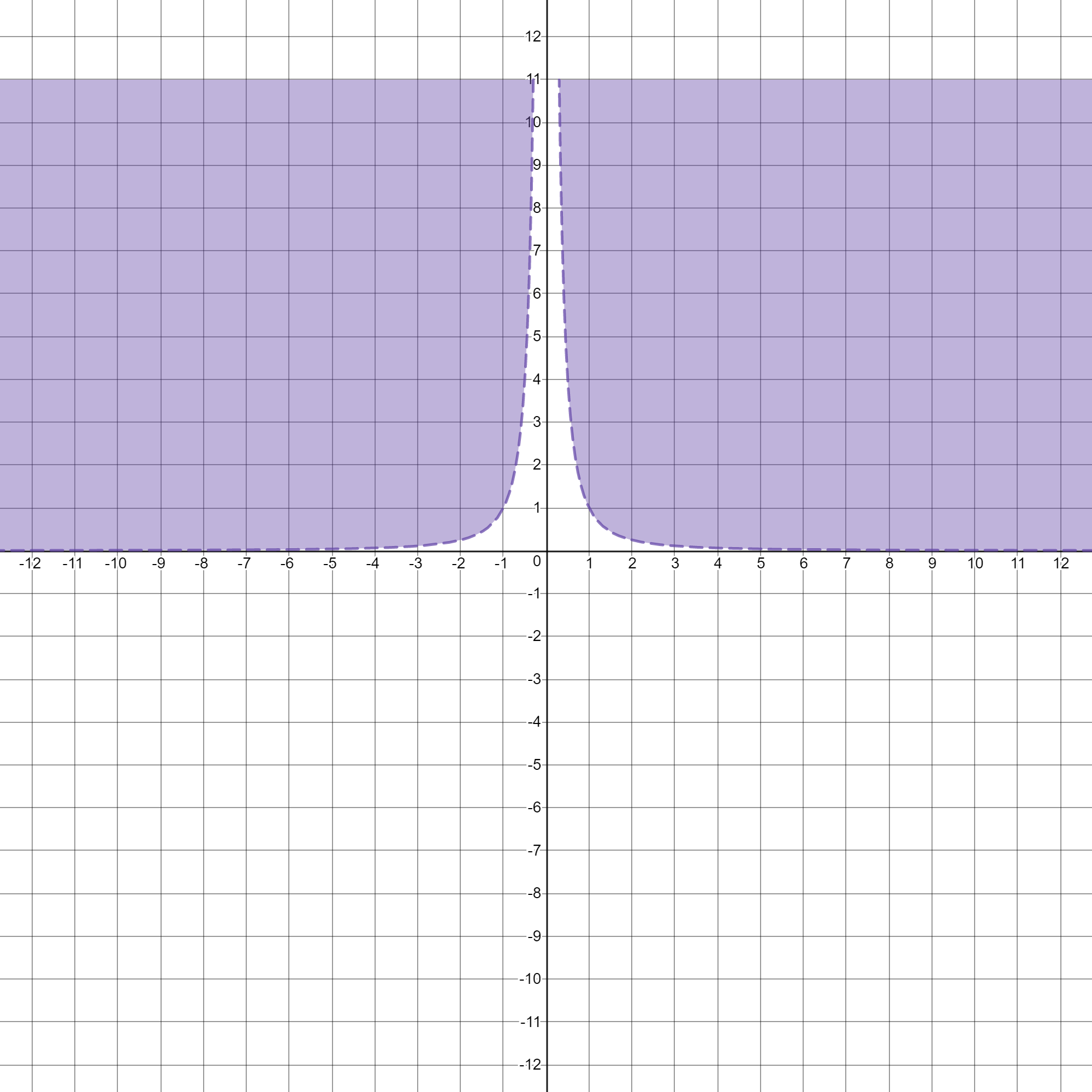}
		\caption{$W_{+}: f(x) = x^{-2}, x\neq 0, f(0)=t$}
		\label{}
	\end{subfigure}%
	\begin{subfigure}{.5\textwidth}
		\centering
		\includegraphics[scale=0.08]{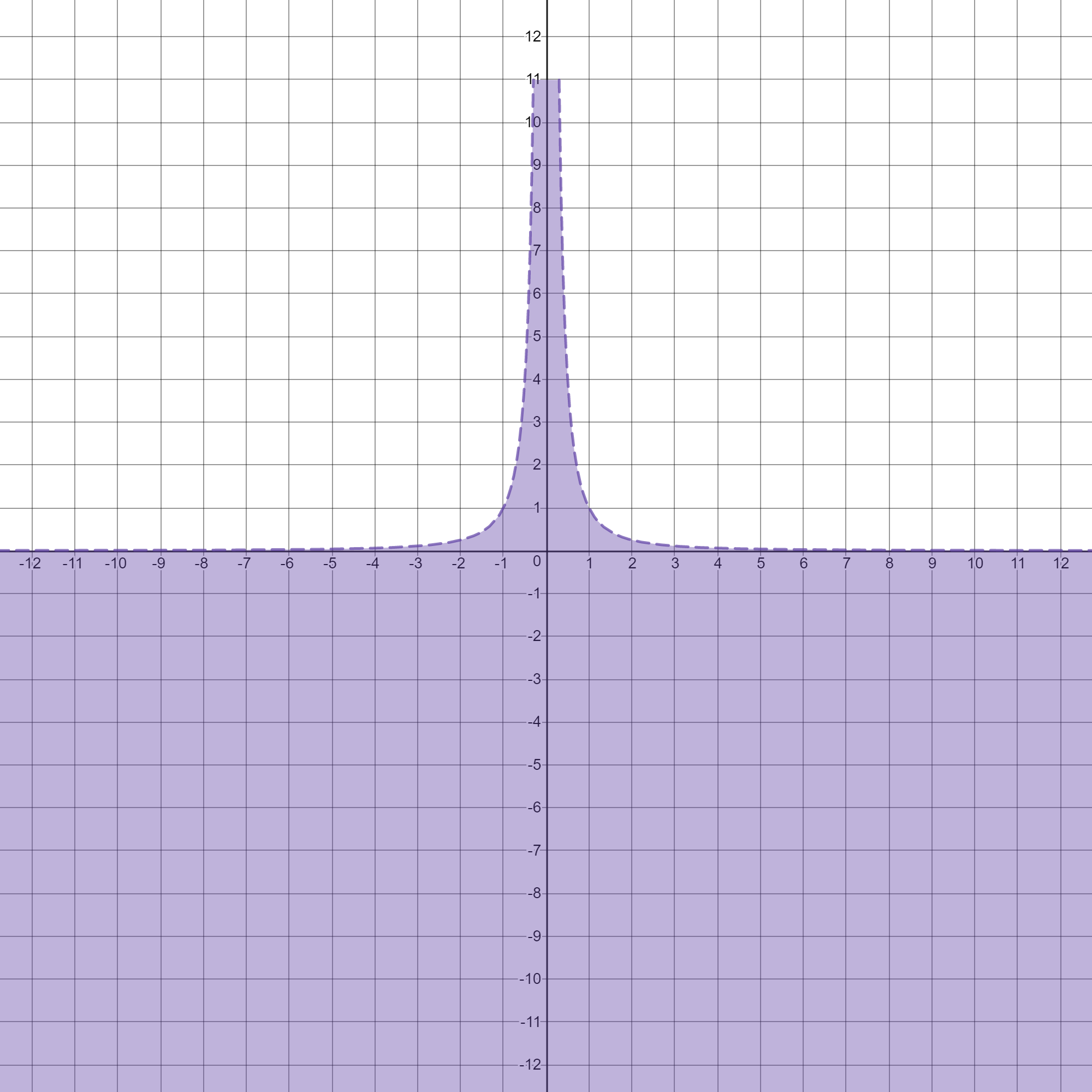}
		\caption{$W_{-}: f(x) = x^{-2}, x\neq 0, f(0)=t$}
		\label{}
	\end{subfigure}
	\caption{Cor. \ref{Cor:LowerWhatever}. $W_{+}$ and $W_{-}$ with a parameter $t$. For any $t\in \mathbb{Z}$, they don't admit any minimal complement in $\mathbb{Z}^{2}$. In the figure, graph of $t=11$ is shown.}
	\label{Fig3}
\end{figure}

\begin{proof}
Since $W_+$ is the union of the truncated lines 
$$\{n\} \times (\lfloor f(n)\rfloor , \infty), n\in \mathbb{Z}$$  and $W_-$ is the union of the truncated lines $$\{n\} \times (-\infty, \lfloor f(n)\rfloor + \lceil f(n)- \lfloor f(n)\rfloor \rceil), n\in \mathbb{Z},$$ by Proposition \ref{Prop:Truncated2D}, it follows that none of $W_+, W_-$ admit any minimal complement in $\bbZ^2$.
\end{proof}

\begin{proposition}
\label{Prop:Truncated30DSlant}
Let $G_1, G_2$ be abelian groups and $G_2$ be free of positive rank. Let 
$$\varphi:G_2 \xra{\sim} \bbZ^{\rk G_2}$$ be an isomorphism, $A$ be a subset of $G_1$ and $u:G_1\to G_2$ be a function. Then none of the subsets
\begin{align*}
	& \cup_{a\in A} \{a\}\times \varphi^\mo \left(\bbZ^{\rk G_2}_{<\varphi(u(a))}\right),\\
	& \cup_{a\in A} \{a\}\times \varphi^\mo \left(\bbZ^{\rk G_2}_{>\varphi(u(a))}\right) 
\end{align*}
of $G_1\times G_2$ admit any minimal complement in $G_1\times G_2$. 
\end{proposition}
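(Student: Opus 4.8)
The plan is to transcribe the rank-one argument of Proposition \ref{Prop:Truncated2D} verbatim, with the single upward direction $(0,1)$ replaced by the element $(0,e)\in G_1\times G_2$, where $0$ denotes the identity of $G_1$ and $e\in G_2$ is the element with $\varphi(e)=(1,0,\dots,0)$, the first standard basis vector of $\bbZ^{\rk G_2}$ (this uses $\rk G_2\geqslant 1$). The two properties of the order on $\bbZ^{\rk G_2}$ that I will use are: (i) for every $z<v$ there is a positive integer $n$ with $z+n\varphi(e)\not<v$, and (ii) if $z<v$ then $z-m\varphi(e)<v$ for every positive integer $m$. Both hold for the order packaged in the notation $\bbZ^{\rk G_2}_{<v}$, and translated through $\varphi$ they say, writing $W$ for the first displayed set, that each fibre $\{a\}\times\varphi^{\mo}(\bbZ^{\rk G_2}_{<\varphi(u(a))})$ is downward-closed along $(0,e)$ and unbounded below yet bounded above in that direction. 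I would first record that $W\subsetneq G_1\times G_2$: fixing any $a_0\in A$, the fibre of $W$ over $a_0$ is exactly $\{a_0\}\times\varphi^{\mo}(\bbZ^{\rk G_2}_{<\varphi(u(a_0))})$, which omits $u(a_0)$, so $(a_0,u(a_0))\notin W$. Every translate $p+W$ is then proper as well, so no single translate is a complement and any complement $C$ of $W$ satisfies $|C|\geqslant 2$.

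Next, fix a complement $C$ and an element $c\in C$; by the previous paragraph $C\setminus\{c\}\neq\varnothing$. For each $w\in W$, property (i) furnishes a positive integer $n_w$ with $w+(0,n_we)\notin W$, whence $c+w+(0,n_we)\notin c+W$. Since $C$ is a complement, this point lies in $c_w+W$ for some $c_w\in C$, and necessarily $c_w\neq c$. As $W$, hence $c_w+W$, is downward-closed along $(0,e)$ by (ii), subtracting $(0,n_we)$ keeps the point inside $c_w+W$, so $c+w\in c_w+W$. Thus every point $c+w$ of $c+W$ lies in $W+(C\setminus\{c\})$, giving $c+W\subseteq W+(C\setminus\{c\})$ and therefore $W+(C\setminus\{c\})=W+C=G_1\times G_2$. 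So $C\setminus\{c\}$ is again a complement and $C$ is not minimal; as $C$ and $c$ were arbitrary, $W$ admits no minimal complement.

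For the second set (truncation above, using $\bbZ^{\rk G_2}_{>\varphi(u(a))}$) I would invoke the principle, already used in Proposition \ref{Prop:Truncated2D}, that minimal complements are preserved under automorphisms. The automorphism of $G_1\times G_2$ fixing $G_1$ and acting on $G_2$ by $g\mapsto\varphi^{\mo}(-\varphi(g))$ carries each set of the first type onto a set of the second type, since $z<v$ is equivalent to $-z>-v$; and every second-type set arises this way because $u\mapsto\varphi^{\mo}(-\varphi(u(\cdot)))$ is a bijection on functions $G_1\to G_2$. Hence the second statement follows from the first.

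I expect the only genuinely delicate point to be the identification of the correct direction $e$ and the verification of (i) and (ii) for the specific total order encoded by $\bbZ^{\rk G_2}_{<v}$ (for the lexicographic order the "most significant coordinate" direction is the right choice, and some care is needed for (ii) there). Once this is pinned down, the remainder is the routine higher-rank copy of the rank-one deletion argument, requiring no hypotheses on $A$.
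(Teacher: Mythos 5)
Your proof is correct and follows essentially the same route as the paper, whose own proof of Proposition \ref{Prop:Truncated30DSlant} simply states that it is similar to that of Proposition \ref{Prop:Truncated2D}: your rank-one deletion argument along the direction $\varphi^{\mo}$ of the most significant basis vector, together with the verification of properties (i) and (ii) for the dictionary order, is exactly the intended adaptation. The concluding reduction of the truncated-above case to the truncated-below case via the automorphism $(a,g)\mapsto(a,-g)$ also mirrors the paper's use of the sign-flip automorphism in Proposition \ref{Prop:Truncated2D}.
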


\begin{proof}
The proof is similar to the proof of Proposition \ref{Prop:Truncated2D}.
\end{proof}

From Proposition \ref{Prop:Truncated30DSlant}, we obtain the following corollaries:

\begin{corollary}
	\label{Cor:SideParabola}
	Let $W_+$ (resp. $W_-$) denote the subset of $\bbZ^2$ consisting of points lying on the right (resp. left) of the parabola $X=Y^2$, i.e.,
	\begin{align*}
	W_\pm & = 
	\{
	(x, y)\in \bbZ^2 \,| \, \pm(x-y^{2})>0
	\}.
	\end{align*}
	None of the subsets $W_+, W_-$ admit any minimal complement in $\bbZ^2$ \textnormal{[see Fig.\ref{Fig4}]}. 
\end{corollary}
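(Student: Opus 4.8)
The plan is to reduce $W_\pm$ to the shape treated in Proposition \ref{Prop:Truncated30DSlant} by applying a single coordinate-swapping automorphism of $\bbZ^2$. First I would recall the observation already used in the proof of Proposition \ref{Prop:Truncated2D}: a subset of $\bbZ^2$ admits a minimal complement in $\bbZ^2$ if and only if its image under any automorphism of $\bbZ^2$ does. Consequently it suffices to exhibit one automorphism carrying each of $W_+, W_-$ to a set of the form forbidden by Proposition \ref{Prop:Truncated30DSlant}.

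Let $\tau$ be the automorphism of $\bbZ^2$ interchanging the two coordinates, $\tau(x,y)=(y,x)$. The point is that the parabola $X=Y^2$, although it looks ``vertical'' in the standard picture, truncates along the horizontal lines $y=\text{const}$: the set $W_+=\{(x,y)\in\bbZ^2 \mid x>y^2\}$ is a union of horizontal rays, one for each value of $y$. Applying $\tau$ turns these into vertical rays, so that
$$\tau(W_+)=\{(a,b)\in\bbZ^2 \mid b>a^2\}=\bigcup_{a\in\bbZ}\{a\}\times(a^2,\infty),$$
and likewise
$$\tau(W_-)=\{(a,b)\in\bbZ^2 \mid b<a^2\}=\bigcup_{a\in\bbZ}\{a\}\times(-\infty,a^2).$$
Taking $G_1=G_2=\bbZ$, $\varphi=\mathrm{id}_{\bbZ}$, $A=\bbZ$, and $u(a)=a^2$, these are precisely the two subsets appearing in Proposition \ref{Prop:Truncated30DSlant}, since $\varphi^\mo(\bbZ_{>\varphi(u(a))})=(a^2,\infty)$ and $\varphi^\mo(\bbZ_{<\varphi(u(a))})=(-\infty,a^2)$. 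Hence neither $\tau(W_+)$ nor $\tau(W_-)$ admits a minimal complement in $\bbZ^2$, and by the automorphism-invariance recalled above, neither does $W_+$ nor $W_-$.

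There is essentially no obstacle here, since the entire content is the recognition that a rotation by a transposition places the side parabola in the framework of Proposition \ref{Prop:Truncated30DSlant}. The only step requiring a word of justification is the invariance of the existence of a minimal complement under automorphisms, and this is immediate: an automorphism $\sigma$ of $\bbZ^2$ sends a complement $C$ of $W$ to a complement $\sigma(C)$ of $\sigma(W)$, and this correspondence is an inclusion-preserving bijection, so it matches minimal complements to minimal complements. I would therefore present the argument in the two short displays above and invoke Proposition \ref{Prop:Truncated30DSlant} directly.
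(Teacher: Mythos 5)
Your proof is correct and is essentially the paper's intended argument: the paper derives Corollary \ref{Cor:SideParabola} directly from Proposition \ref{Prop:Truncated30DSlant} by viewing $\bbZ^2$ as $G_1\times G_2$ with $G_1$ the $y$-axis and $G_2$ the $x$-axis and taking $u(a)=a^2$, which is exactly your coordinate swap $\tau$ phrased as a subgroup decomposition rather than as an explicit automorphism. Your added justification that automorphisms preserve the existence of minimal complements is the same observation the paper itself records and uses in the proof of Proposition \ref{Prop:Truncated2D}.
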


\begin{figure}[h]
	\centering
	\begin{subfigure}{.5\textwidth}
		\centering
		\includegraphics[scale=0.08]{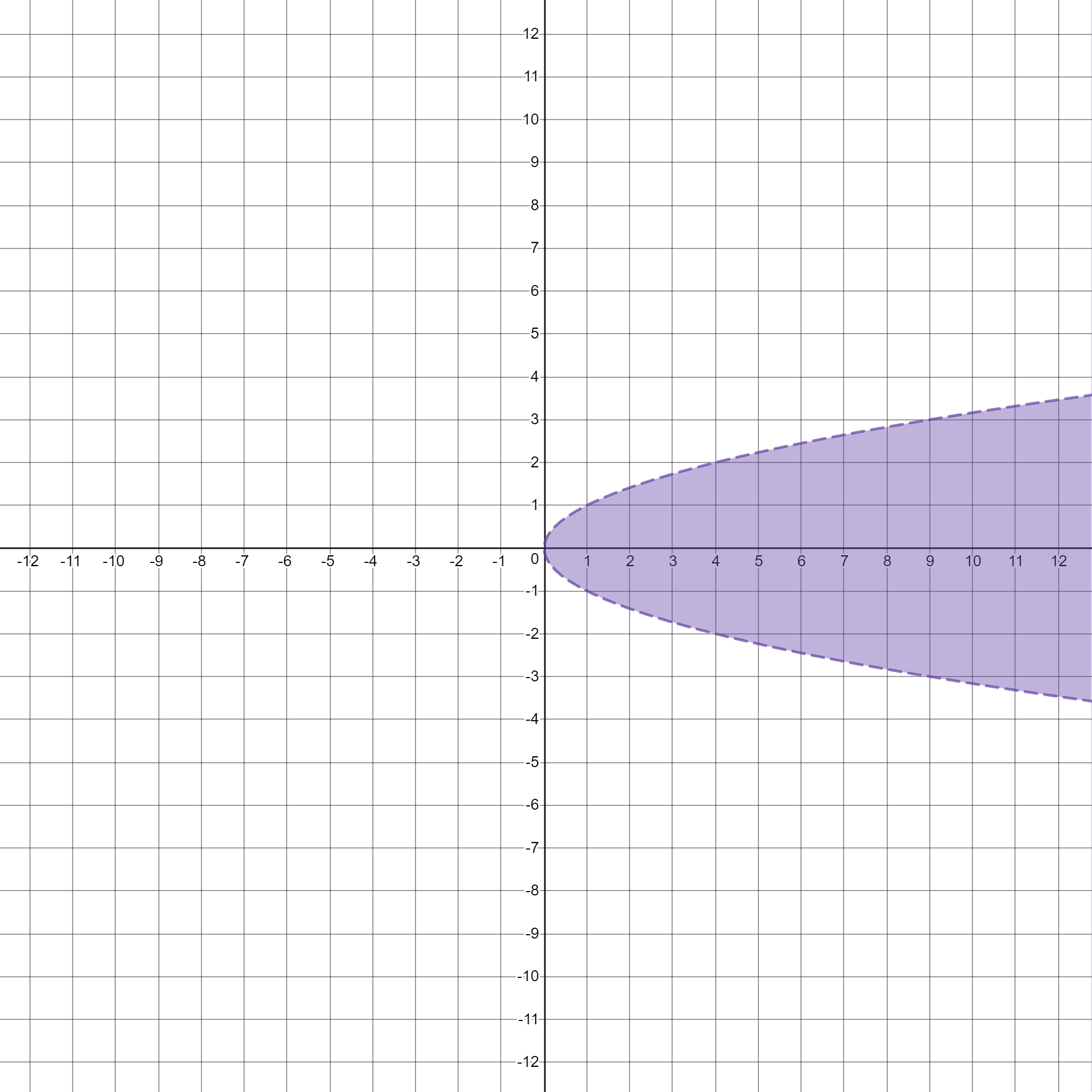}
		\caption{$W_{+}$}
		\label{}
	\end{subfigure}%
	\begin{subfigure}{.5\textwidth}
		\centering
		\includegraphics[scale=0.08]{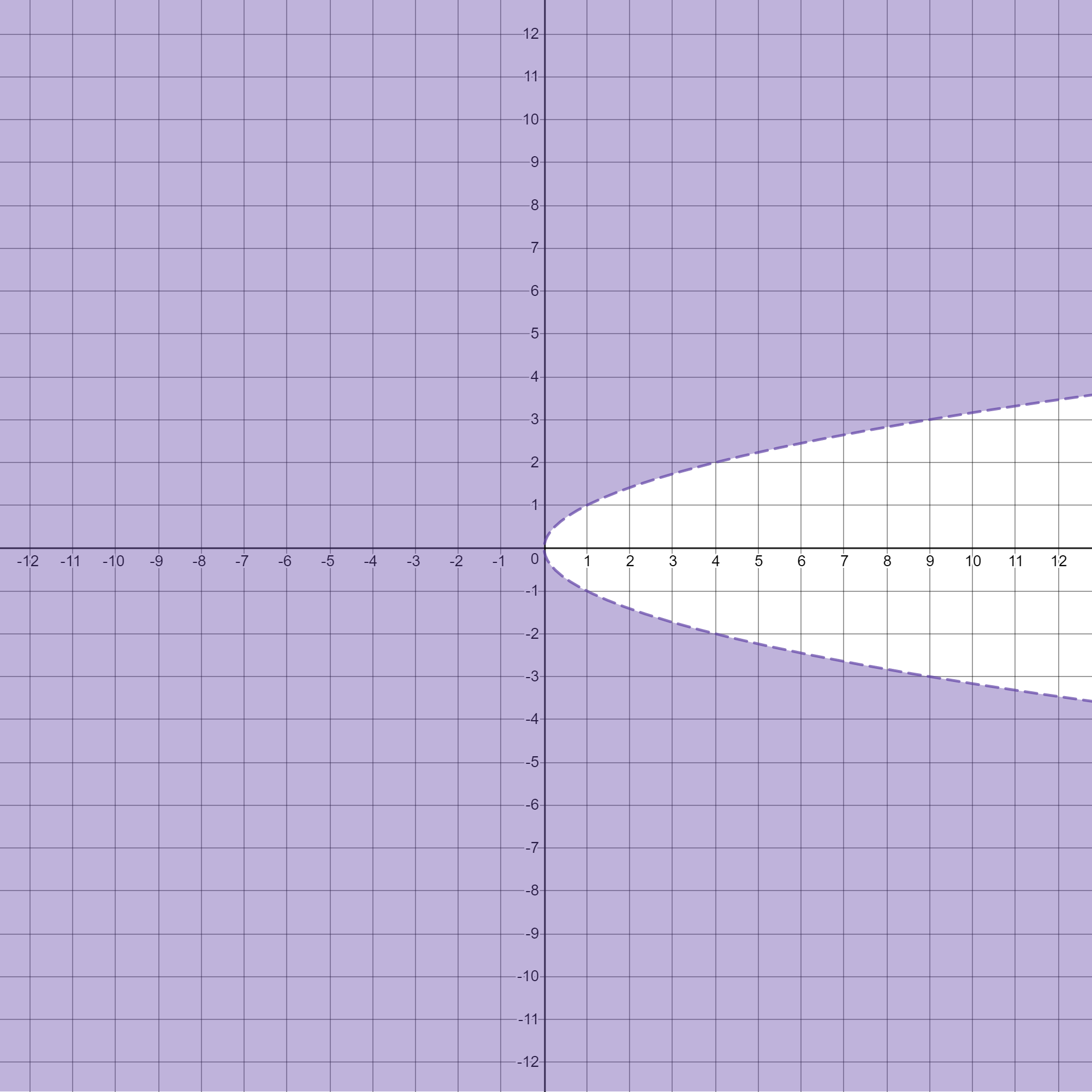}
		\caption{$W_{-}$}
		\label{}
	\end{subfigure}
	\caption{Cor. \ref{Cor:SideParabola}. $W_{+}$ and $W_{-}$ don't admit any minimal complement in $\mathbb{Z}^{2}$}
	\label{Fig4}
\end{figure}

\begin{corollary}
\label{Cor:LowerParabolaRota}
Let $W_+$ (resp. $W_-$) denote the subset of $\bbZ^2$ consisting of points lying `above' (resp. `below') the parabola $Y=X^2$ rotated clockwise by $45^\circ$, i.e.,
\begin{align*}
	W_+ & = 
	\left\{
	(x, y)\in \bbZ^2 \,| \, \dfrac{x+y}{\sqrt 2}  > \left(\dfrac{x-y}{\sqrt 2}\right)^2
	\right\}	,\\
	W_- & = 
	\left\{
	(x, y)\in \bbZ^2 \,| \, \dfrac{x+y}{\sqrt 2}  < \left(\dfrac{x-y}{\sqrt 2}\right)^2
	\right\}.
\end{align*}
None of the subsets $W_+, W_-$ admit any minimal complement in $\bbZ^2$ \textnormal{[see Fig.\ref{Fig5}]}. 
\end{corollary}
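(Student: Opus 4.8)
The plan is to reduce this to Corollary \ref{Cor:LowerWhatever} by means of an automorphism of $\bbZ^2$, exploiting the principle (established inside the proof of Proposition \ref{Prop:Truncated2D}) that a subset of $\bbZ^2$ admits a minimal complement if and only if its image under any automorphism of $\bbZ^2$ does. The point to keep in mind at the outset is that the geometric $45^\circ$ clockwise rotation is \emph{not} itself a lattice automorphism, since its matrix has entries $\pm 1/\sqrt 2\notin\bbZ$; so I cannot literally rotate the picture back. What saves the argument is that the \emph{axis} of the rotated parabola points in the rational direction $(1,1)$ (the level curves $\frac{x-y}{\sqrt2}=\text{const}$ of the quadratic are lines of constant $x-y$), and a shear can straighten these fibres onto vertical lines.

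Concretely, I would take the shear automorphism $\sigma:(x,y)\mapsto(x-y,\,y)$ of $\bbZ^2$, with inverse $(a,b)\mapsto(a+b,b)$, and rewrite the defining inequalities in the coordinates $a=x-y$, $b=y$. Substituting $x+y=a+2b$ and $x-y=a$ into $\frac{x+y}{\sqrt2}\gtrless\bigl(\frac{x-y}{\sqrt2}\bigr)^2$ and solving for $b$, the condition for $W_+$ becomes $b> g(a)$ and that for $W_-$ becomes $b< g(a)$, where $g:\bbZ\to\bbR$ is the explicit real-valued function $g(a)=\frac{a^2}{2\sqrt2}-\frac{a}{2}$. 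Thus $\sigma(W_+)=\{(a,b)\in\bbZ^2\mid b> g(a)\}$ and $\sigma(W_-)=\{(a,b)\in\bbZ^2\mid b< g(a)\}$ are precisely the two sets appearing in Corollary \ref{Cor:LowerWhatever} (with the function $f$ there taken to be $g$). It is worth emphasizing that the appearance of a real-valued $g$ with a $\sqrt2$ in it is harmless: Corollary \ref{Cor:LowerWhatever} is stated for arbitrary functions $f:\bbZ\to\bbR$, so no integrality of the cutoff is needed.

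Having made this identification, the conclusion is immediate: by Corollary \ref{Cor:LowerWhatever}, neither $\sigma(W_+)$ nor $\sigma(W_-)$ admits a minimal complement in $\bbZ^2$, and since $\sigma$ is an automorphism of $\bbZ^2$, the same holds for $W_+$ and $W_-$ by the invariance principle above. I would also remark that one may bypass Corollary \ref{Cor:LowerWhatever} and invoke Proposition \ref{Prop:Truncated2D} directly, since for each fixed $a\in\bbZ$ the fibre of $\sigma(W_+)$ is the full truncated line $\{a\}\times\bigl(\lfloor g(a)\rfloor,\infty\bigr)$ (and symmetrically for $\sigma(W_-)$), so $A=\bbZ$ and every column is present.

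The only genuinely nonroutine step is the recognition, rather than any calculation, that the correct substitution is the shear aligning the fibre direction $(1,1)$ with a coordinate axis; once that is seen, the transformation of the inequality is a short and mechanical computation. The main thing to verify carefully is therefore just that after applying $\sigma$ each fibre is a \emph{full} half-line (so that the truncated-line hypothesis of Proposition \ref{Prop:Truncated2D} / Corollary \ref{Cor:LowerWhatever} genuinely applies), which holds because $\sigma$ is a bijection of $\bbZ^2$ and the transformed condition $b\gtrless g(a)$ imposes no constraint other than the one-sided bound on $b$ for each $a$.
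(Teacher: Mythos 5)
Your proof is correct. The computation checks out: under the shear $\sigma(x,y)=(x-y,\,y)$ one has $x+y=a+2b$ and $x-y=a$, so the defining inequality $\frac{x+y}{\sqrt2}\gtrless\frac{(x-y)^2}{2}$ becomes $b\gtrless\frac{a^2}{2\sqrt2}-\frac a2$, hence $\sigma(W_\pm)$ are exactly sets of the form treated in Corollary \ref{Cor:LowerWhatever}, and the automorphism-invariance principle (stated explicitly inside the proof of Proposition \ref{Prop:Truncated2D}) transfers the conclusion back to $W_\pm$. The paper reaches the same conclusion by a different formal route: instead of conjugating by an automorphism, it decomposes $\bbZ^2$ internally as $G_1G_2\cong G_1\times G_2$ with $G_1=\{0\}\times\bbZ$ and $G_2$ the diagonal $\{(x,y)\,|\,y=x\}$, writes $W_\pm$ as unions of truncated $G_2$-lines via explicit functions $u,u'\colon G_1\to G_2$ (with floor/ceiling corrections, since the bounding values must land in the group $G_2$), and then applies the general Proposition \ref{Prop:Truncated30DSlant}. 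Both proofs implement the same geometric idea, namely aligning the fibre direction $(1,1)$ with a coordinate axis by a unimodular change of coordinates, but yours is more economical: it needs only the rank-two statements (Proposition \ref{Prop:Truncated2D} or Corollary \ref{Cor:LowerWhatever}) and sidesteps the floor/ceiling bookkeeping entirely, because Corollary \ref{Cor:LowerWhatever} accepts arbitrary real-valued bounding functions. What the paper's packaging buys is uniformity: the same subgroup-decomposition template extends verbatim to arbitrary rational rotation angles (Corollary \ref{Cor:LowerWhateverRota}) and prefigures the product structure used for spiked subsets in general abelian groups, whereas in your approach the specific shear would have to be replaced, for a general rational slope $a/b$, by a unimodular matrix obtained from a B\'ezout completion of the fibre direction to a basis of $\bbZ^2$.
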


\begin{proof}
Let $G_1, G_2$ denote the subgroups $\{0\}\times \bbZ, \{
	(x, y)\in \bbZ^2 \,| \,  y = x
	\}$ of $\bbZ^2$ respectively. 
Note that $\bbZ^2$ is equal to $G_1G_2$, which we identify with $G_1\times G_2$ via the product map. Let $\varphi:G_2\xra{\sim} \bbZ$ denote the map given by 
$$
\varphi(x,y) = x, \quad (x, y)\in G_2,$$
$u:G_1\to G_2$ denote the map given by 
$$u(0,a) 
= 
\left(\left\lfloor \dfrac{a^2}{2\sqrt 2} - \dfrac a2 \right\rfloor + \left\lceil \dfrac{a^2}{2\sqrt 2} - \dfrac a2 - \left\lfloor  \dfrac{a^2}{2\sqrt 2} - \dfrac a2\right\rfloor \right\rceil, \left\lfloor \dfrac{a^2}{2\sqrt 2} - \dfrac a2 \right\rfloor + \left\lceil \dfrac{a^2}{2\sqrt 2} - \dfrac a2 - \left\lfloor  \dfrac{a^2}{2\sqrt 2} - \dfrac a2\right\rfloor \right\rceil\right),
$$
and $u':G_1\to G_2$ denote the map given by 
$$u'(0,a) 
= 
\left(\left\lfloor \dfrac{a^2}{2\sqrt 2} - \dfrac a2 \right\rfloor , \left\lfloor \dfrac{a^2}{2\sqrt 2} - \dfrac a2 \right\rfloor \right).
$$
Then it follows that 
\begin{align*}
W_+ & = \cup_{g\in G_1} \{g\}\times \varphi^\mo \left(\bbZ_{<\varphi(u(g))}\right),\\
W_- & = \cup_{g\in G_1} \{g\}\times \varphi^\mo \left(\bbZ_{>\varphi(u'(g))}\right).
\end{align*}
By Proposition \ref{Prop:Truncated30DSlant}, the corollary follows.
\end{proof}

\begin{figure}[h]
	\centering
	\begin{subfigure}{.5\textwidth}
		\centering
		\includegraphics[scale=0.08]{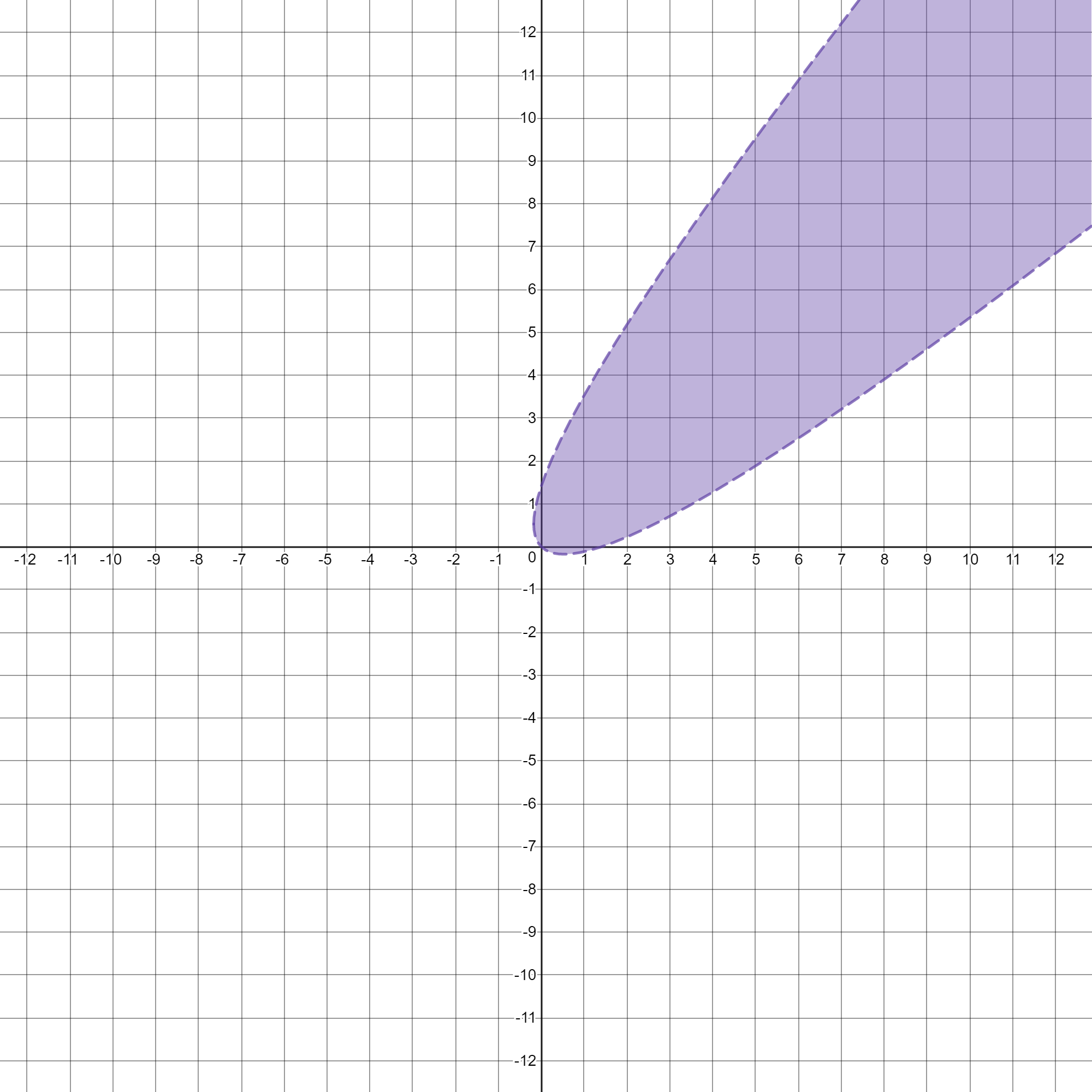}
		\caption{$W_{+}$}
		\label{}
	\end{subfigure}%
	\begin{subfigure}{.5\textwidth}
		\centering
		\includegraphics[scale=0.08]{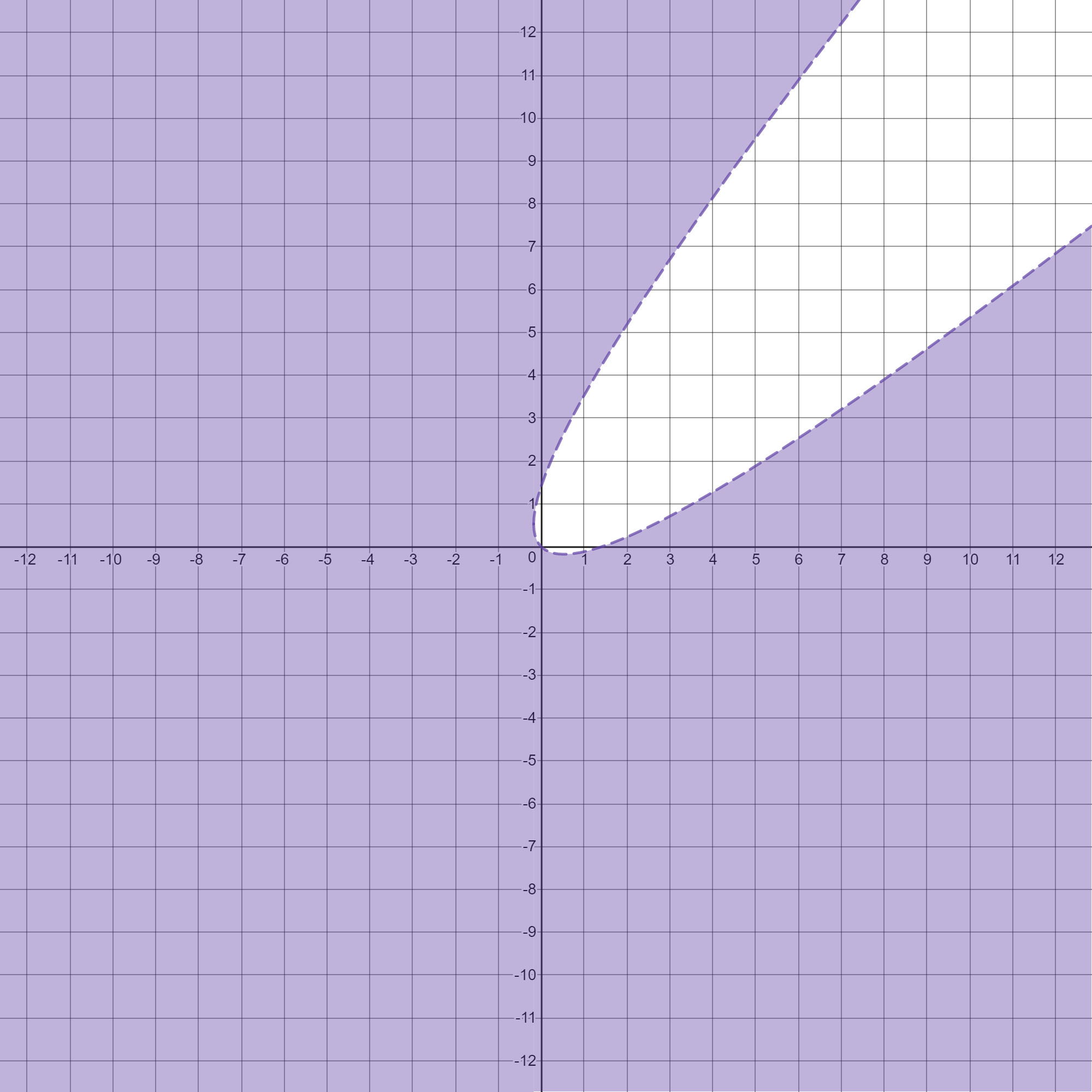}
		\caption{$W_{-}$}
		\label{}
	\end{subfigure}
	\caption{Cor. \ref{Cor:LowerParabolaRota}. $W_{+}$ and $W_{-}$ don't admit any minimal complement in $\mathbb{Z}^{2}$}
	\label{Fig5}
\end{figure}

\begin{corollary}
	\label{Cor:sideWhatever}
	Let $f:\bbZ \to \bbR$ be a function and let $W_\pm$ denote the subset of $\bbZ^2$ defined by
	\begin{align*}
	W_\pm & = 
	\{
	(x, y)\in \bbZ^2 \,| \, \pm(x-f(y)) > 0
	\}.
	\end{align*}
	None of the subsets $W_+, W_-$ admit any minimal complement in $\bbZ^2$ \textnormal{[see Fig.\ref{Fig6}]}. 
\end{corollary}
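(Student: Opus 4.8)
The plan is to deduce Corollary~\ref{Cor:sideWhatever} from Corollary~\ref{Cor:LowerWhatever} by applying the coordinate-swapping automorphism of $\bbZ^2$. Recall from the proof of Proposition~\ref{Prop:Truncated2D} that a subset of $\bbZ^2$ admits a minimal complement in $\bbZ^2$ if and only if its image under any automorphism of $\bbZ^2$ admits one; this invariance is the only structural ingredient I need beyond the earlier corollary. The situation is entirely parallel to the second half of the proof of Proposition~\ref{Prop:Truncated2D}, where the involution $(x,y)\mapsto(x,-y)$ was used to interchange upper and lower truncations.

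First I would introduce the involution $\sigma:\bbZ^2\xra{\sim}\bbZ^2$ defined by $\sigma(x,y)=(y,x)$, which is plainly an automorphism of $\bbZ^2$. Next I would compute the images $\sigma(W_\pm)$. A point $(a,b)$ lies in $\sigma(W_+)$ exactly when $(b,a)\in W_+$, that is, when $b-f(a)>0$; hence $\sigma(W_+)=\{(a,b)\in\bbZ^2\,|\,b>f(a)\}$, which is precisely the set $W_+$ of Corollary~\ref{Cor:LowerWhatever} attached to the same function $f$. The identical computation yields $\sigma(W_-)=\{(a,b)\in\bbZ^2\,|\,b<f(a)\}$, the set $W_-$ of that corollary.

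Finally I would invoke Corollary~\ref{Cor:LowerWhatever} to conclude that neither $\sigma(W_+)$ nor $\sigma(W_-)$ admits a minimal complement in $\bbZ^2$, whence by the automorphism invariance recalled above neither $W_+$ nor $W_-$ does. I do not anticipate any genuine obstacle here: the argument is a routine reduction, and the only points needing attention are checking that $\sigma$ is indeed an automorphism and correctly tracking the direction of the defining inequality under the swap of coordinates.
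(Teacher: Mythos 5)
Your argument is correct: the swap $\sigma(x,y)=(y,x)$ is an automorphism of $\bbZ^2$, your computation of $\sigma(W_\pm)$ tracks the inequalities correctly, and the invariance of ``admits a minimal complement'' under automorphisms is exactly the fact already recorded and used in the proof of Proposition~\ref{Prop:Truncated2D}. However, your route differs from the paper's. The paper derives Corollary~\ref{Cor:sideWhatever} (together with Corollary~\ref{Cor:SideParabola}) directly from Proposition~\ref{Prop:Truncated30DSlant}: one writes $\bbZ^2 = G_1\times G_2$ with $G_1=\{0\}\times\bbZ$ and $G_2=\bbZ\times\{0\}$, so that $W_\pm$ becomes a union of horizontal lines truncated to the right or left of $u(a)\approx f(a)$, and the general proposition applies. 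You instead stay entirely in the planar, vertically-truncated setting: you reuse Corollary~\ref{Cor:LowerWhatever} as a black box and transport the conclusion by the symmetry $\sigma$. The trade-off is as follows. Your reduction is shorter and more self-contained for this particular statement, and it sidesteps the floor/ceiling bookkeeping needed to convert a real-valued $f$ into integer truncation heights, since Corollary~\ref{Cor:LowerWhatever} already absorbed that. The paper's route via Proposition~\ref{Prop:Truncated30DSlant} buys uniformity: the same proposition also yields the rotated versions (Corollary~\ref{Cor:LowerWhateverRota}) and the higher-dimensional statements (Corollary~\ref{Cor:LowerWhatever3D}), where a single coordinate swap no longer suffices and one genuinely needs the $G_1\times G_2$ formulation with an isomorphism $\varphi$. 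In fact your $\sigma$ is precisely the change of coordinates that the paper's choice of subgroup decomposition encodes implicitly, so the two proofs are close in spirit; but as written, yours relies on a different pair of ingredients (Corollary~\ref{Cor:LowerWhatever} plus automorphism invariance) rather than on the general slanted-truncation proposition.
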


\begin{figure}[h]
	\centering
	\begin{subfigure}{.5\textwidth}
		\centering
		\includegraphics[scale=0.08]{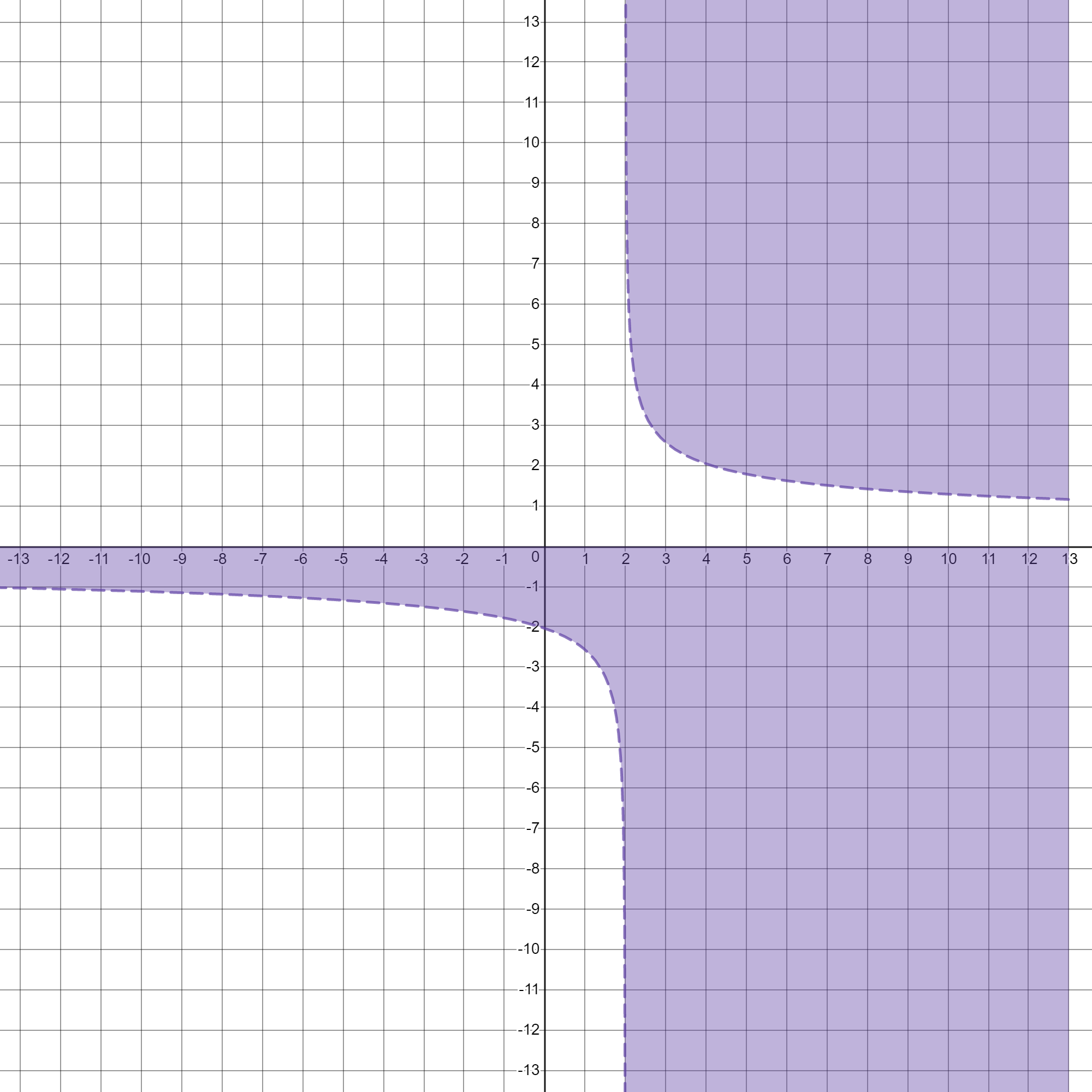}
		\caption{$W_{+}: f(y) = 17y^{-3}+2,y\neq 0, f(0)=t$}
		\label{}
	\end{subfigure}%
	\begin{subfigure}{.5\textwidth}
		\centering
		\includegraphics[scale=0.08]{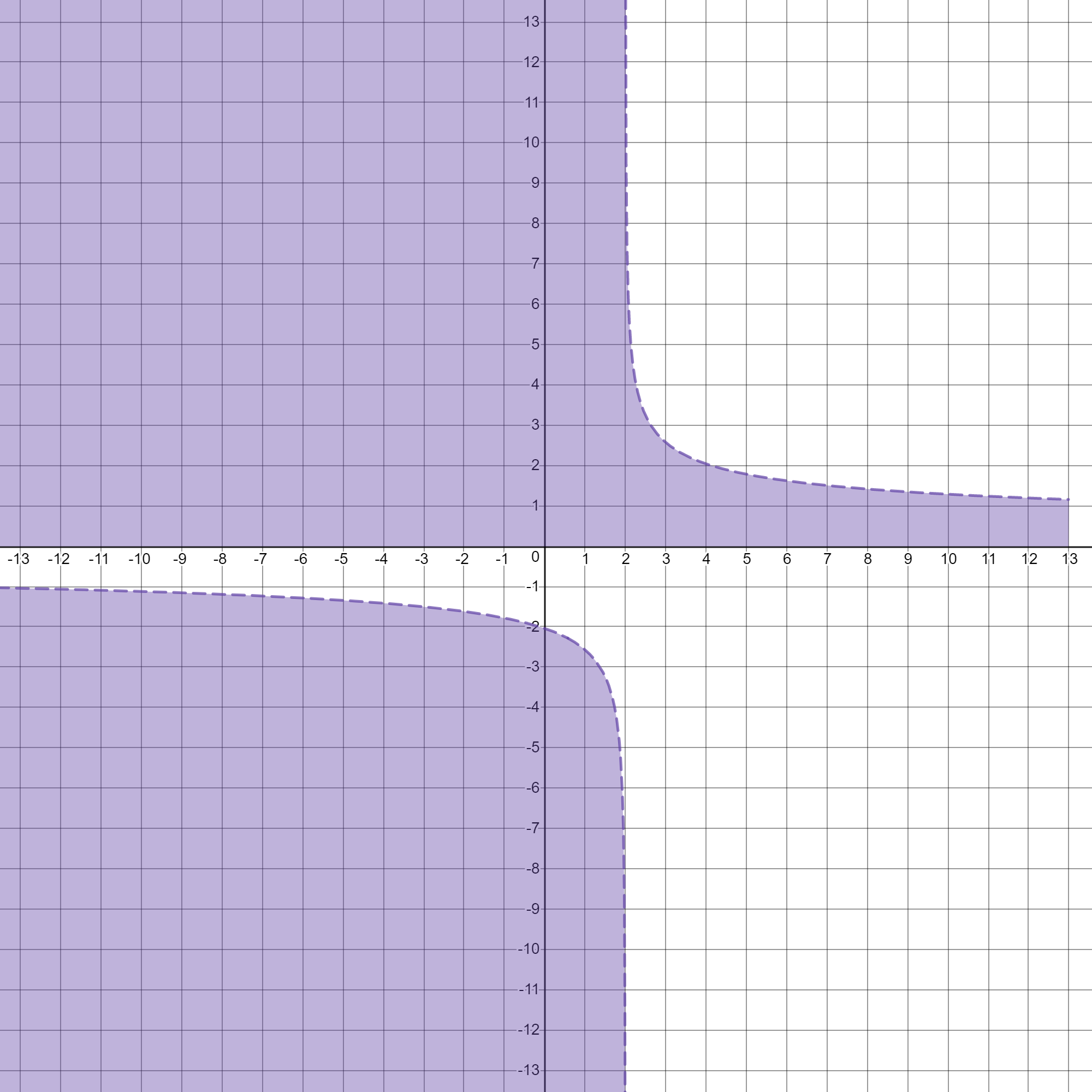}
		\caption{$W_{-}: f(y) = 17y^{-3}+2 ,y\neq 0,f(0)=t$}
		\label{}
	\end{subfigure}
	\caption{Cor. \ref{Cor:sideWhatever}. $W_{+}$ and $W_{-}$ with a parameter $t$. For any $t\in \mathbb{Z}$, they don't admit any minimal complement in $\mathbb{Z}^{2}$. In the figure, graph of $t=13$ is shown.}
	\label{Fig6}
\end{figure}

\begin{corollary}
\label{Cor:LowerWhateverRota}
Let $f: \bbZ \to \bbR$ be a function and let $W_+$ (resp.$W_-$) denote the subset of $\bbZ^2$ lying `above' (resp. `below') the graph of $Y=f(X)$ rotated clockwise by an angle $\theta$, i.e., 
\begin{align*}
	W_+ & = 
	\left\{
	(x, y)\in \bbZ^2 \,| \, x\sin \theta + y \cos \theta > f(x\cos \theta - y \sin\theta)
	\right\}	,\\
	W_- & = 
	\left\{
	(x, y)\in \bbZ^2 \,| \,x\sin \theta + y \cos \theta < f(x\cos \theta - y \sin\theta)	\right\}.
\end{align*}
If $\tan \theta$ is rational\footnote{$\tan \frac \pi 2$ is to be interpreted as ``$\frac 10$' and hence as a rational.}, then none of the subsets $W_+, W_-$ admit any minimal complement in $\bbZ^2$. 
\end{corollary}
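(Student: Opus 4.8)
The plan is to reduce the statement to Proposition \ref{Prop:Truncated30DSlant} by exhibiting $W_+$ and $W_-$ as unions of lines of $\bbZ^2$ truncated along a suitable rank-one direct summand, exactly as was done for the $45^\circ$ rotation in Corollary \ref{Cor:LowerParabolaRota}. Write $X' = x\cos\theta - y\sin\theta$ for the argument of $f$ and $Y' = x\sin\theta + y\cos\theta$ for the quantity compared against it, so that $W_\pm = \{(x,y)\in\bbZ^2 : \pm(Y' - f(X')) > 0\}$. The level sets $\{X' = \text{const}\}$ are the lines parallel to the direction $(\sin\theta, \cos\theta)$, and the hypothesis that $\tan\theta$ is rational is precisely what makes this direction rational: writing $\tan\theta = p/q$ in lowest terms (with the convention $(p,q) = (1,0)$ when $\theta = \pi/2$), the vector $e := (p,q)$ is primitive and $(\sin\theta, \cos\theta)$ is a nonzero real multiple of $e$.

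First I would set $G_2 := \bbZ\cdot e$ and choose, by B\'ezout, a vector completing $e$ to a basis of $\bbZ^2$; letting $G_1$ be the subgroup it generates gives an internal direct sum $\bbZ^2 = G_1 \times G_2$ with $G_2$ free of rank one. Because $e$ is primitive, the cosets of $G_2$ are exactly the lines parallel to $e$ through lattice points, i.e.\ the level lines of $X'$ meeting $\bbZ^2$; thus $X'$ is constant on each coset $\{a\}\times G_2$ ($a \in G_1$) and takes distinct values on distinct cosets. Define $\varphi : G_2 \xra{\sim} \bbZ$ by $\varphi(ne) = n$.

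The decisive computation is that along a coset the function $Y'$ is strictly monotone: moving by $e$ changes $Y'$ by the nonzero quantity $p\sin\theta + q\cos\theta$. Hence on each coset the condition $Y' > f(X')$ (resp.\ $Y' < f(X')$) cuts out a genuine half-line, i.e.\ a set of the form $\{n \in \bbZ : n > c_a\}$ (resp.\ $\{n < c_a\}$) for a real threshold $c_a$ depending only on the coset. Using the elementary identities $\{n\in\bbZ : n > c\} = \{n > \lfloor c\rfloor\}$ and $\{n\in\bbZ : n < c\} = \{n < \lceil c\rceil\}$, I would define $u, u' : G_1 \to G_2$ by $u(a) = \lfloor c_a\rfloor\, e$ and $u'(a) = \lceil c_a\rceil\, e$ (the floor/ceiling bookkeeping being identical in spirit to that in the proof of Corollary \ref{Cor:LowerParabolaRota}), so that
$$W_+ = \bigcup_{a\in G_1}\{a\}\times\varphi^\mo\left(\bbZ_{>\varphi(u(a))}\right), \qquad W_- = \bigcup_{a\in G_1}\{a\}\times\varphi^\mo\left(\bbZ_{<\varphi(u'(a))}\right),$$
up to interchanging the roles of the two forms according to the sign of $p\sin\theta + q\cos\theta$. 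Both are of the shape covered by Proposition \ref{Prop:Truncated30DSlant}, which then yields that neither admits a minimal complement in $G_1\times G_2 = \bbZ^2$.

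The main obstacle I anticipate is not any single hard estimate but making the reduction airtight: one must verify carefully that the rational-direction lines really are the cosets of a rank-one direct summand (this is where primitivity of $e$ and the convention for $\theta = \pi/2$ enter), and that the monotonicity of $Y'$ forces the intersection with every coset to be a one-sided truncation rather than, say, an empty or bi-infinite set --- so that the index set $A$ in Proposition \ref{Prop:Truncated30DSlant} can be taken to be all of $G_1$. Once these structural points are pinned down, the passage through the floor/ceiling functions and the citation of Proposition \ref{Prop:Truncated30DSlant} are routine.
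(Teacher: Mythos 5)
Your proposal is correct and follows essentially the same route as the paper's proof: split $\bbZ^2$ as an internal direct sum $G_1\oplus G_2$ with $G_2$ the rank-one subgroup in the rational direction $(\sin\theta,\cos\theta)$, check that $W_\pm$ are then unions of cosets of $G_2$ truncated at floor/ceiling thresholds, and conclude by Proposition \ref{Prop:Truncated30DSlant}. If anything, yours is the more careful instantiation: the paper's printed proof takes $G_2=\{(x,y)\in\bbZ^2\,|\,ax=by\}=\bbZ\cdot(b,a)$ where $\tan\theta=a/b$, on whose cosets the argument $x\cos\theta-y\sin\theta$ of $f$ is \emph{not} constant unless $a=\pm b$ (the $45^\circ$ case it generalizes), so the roles of $a$ and $b$ there appear transposed, whereas your choice $G_2=\bbZ\cdot(p,q)$, together with the explicit monotonicity and sign checks on $Y'$ along cosets, is exactly what makes the truncated-coset description valid for an arbitrary $f$.
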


\begin{proof}
Suppose $a,b$ are integers with no common factor and $\tan \theta = \frac ab$. Choose integers $c,d$ such that $ad-bc = -1$. Let $G_1, G_2$ denote the subgroups $\{
	(x, y)\in \bbZ^2 \,| \,  c x = dy
	\}, \{
	(x, y)\in \bbZ^2 \,| \,  a x = by
	\}$ of $\bbZ^2$ respectively. 
Note that $\bbZ^2$ is equal to $G_1G_2$, which we identify with $G_1\times G_2$ via the product map. Let $\varphi:G_2\xra{\sim} \bbZ$ denote the map given by 
$$
\varphi(x,y) = \frac xb, \quad (x, y)\in G_2,$$
$u:G_1\to G_2$, $u':G_1\to G_2$ denote the maps given by 
\begin{align*}
u(dt, ct) 
& = 
\left( b(\left\lfloor \alpha_t \right\rfloor + \left\lceil \alpha_t - \left\lfloor  \alpha_t\right\rfloor \right\rceil), a(\left\lfloor \alpha_t \right\rfloor + \left\lceil \alpha_t - \left\lfloor  \alpha_t\right\rfloor \right\rceil)\right),\\
u'(dt,ct) 
&= 
\left(b\left\lfloor \alpha_t \right\rfloor , a\left\lfloor \alpha_t \right\rfloor \right)
\end{align*}
where 
$$\alpha_t = 
\dfrac 1 {\sqrt{a^2 + b^2}} \left( 
f\left(\dfrac {-t}{\sqrt{a^2 + b^2}} \right)- \dfrac {ac+bd}{\sqrt {a^2 + b^2}} t
\right).
$$
Then it follows that 
\begin{align*}
W_+ & = \cup_{g\in G_1} \{g\}\times \varphi^\mo \left(\bbZ_{<\varphi(u(g))}\right),\\
W_- & = \cup_{g\in G_1} \{g\}\times \varphi^\mo \left(\bbZ_{>\varphi(u'(g))}\right).
\end{align*}
By Proposition \ref{Prop:Truncated30DSlant}, the corollary follows.
\end{proof}

In higher dimensions, the parabola is replaced by an $n$-dimensional paraboloid.
\begin{corollary}
\label{Cor:LowerParaboloid}
Let $n$ be a positive integer. Let $W_+$ (resp. $W_-$) denote the subset of $\bbZ^{n+1}$ consisting of points lying above (resp. below) the $n$-dimensional paraboloid $Z=X_1^2+X_2^2 + \cdots + X_n^2$, i.e.,
\begin{align*}
	W_+ & = 
	\{
	(x_1, \cdots, x_n, x_{n+1})\in \bbZ^{n+1} \,| \, + (x_{n+1} - x_1^2-x_2^2 - \cdots - x_n^2) >0
	\},\\
	W_- & = 
	\{
	(x_1, \cdots, x_n, x_{n+1})\in \bbZ^{n+1} \,| \, - (x_{n+1} - x_1^2-x_2^2 - \cdots - x_n^2) >0
	\}.
\end{align*}
None of the subsets $W_+, W_-$ admit any minimal complement in $\bbZ^{n+1}$. 
\end{corollary}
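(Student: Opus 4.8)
The plan is to recognise $W_+$ and $W_-$ as unions of truncated vertical lines and then invoke Proposition \ref{Prop:Truncated30DSlant}, exactly as Corollary \ref{Cor:LowerParabola} specialises Proposition \ref{Prop:Truncated2D}. Concretely, I would identify $\bbZ^{n+1}$ with the product $G_1\times G_2$, where $G_1=\bbZ^n$ comprises the first $n$ coordinates $X_1,\dots,X_n$ and $G_2=\bbZ$ comprises the last coordinate $Z$. Since $G_2$ is free of rank $1$, and hence of positive rank, the standing hypotheses of Proposition \ref{Prop:Truncated30DSlant} are satisfied.

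Next I would take $A=G_1=\bbZ^n$ and define $u:G_1\to G_2$ by $u(x_1,\dots,x_n)=x_1^2+\cdots+x_n^2$. Because a sum of squares of integers is again an integer, the function $u$ is genuinely $G_2$-valued and no rounding via floors and ceilings is required here, in contrast with Corollaries \ref{Cor:LowerWhatever} and \ref{Cor:sideWhatever}. Choosing $\varphi:G_2\xra{\sim}\bbZ$ to be the identity, the fibre of $W_+$ over a point $a=(x_1,\dots,x_n)$ is exactly the set of integers $x_{n+1}$ with $x_{n+1}>u(a)$, that is $\varphi^\mo(\bbZ_{>\varphi(u(a))})$, while the fibre of $W_-$ over $a$ is the set with $x_{n+1}<u(a)$, that is $\varphi^\mo(\bbZ_{<\varphi(u(a))})$.

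With these identifications one has
$$W_+=\cup_{a\in G_1}\{a\}\times \varphi^\mo\!\left(\bbZ_{>\varphi(u(a))}\right),\qquad W_-=\cup_{a\in G_1}\{a\}\times \varphi^\mo\!\left(\bbZ_{<\varphi(u(a))}\right),$$
which are precisely the two shapes ruled out by Proposition \ref{Prop:Truncated30DSlant}. That proposition then immediately gives that neither $W_+$ nor $W_-$ admits a minimal complement in $G_1\times G_2=\bbZ^{n+1}$, completing the argument.

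I do not expect any substantive obstacle: the result is a direct specialisation of Proposition \ref{Prop:Truncated30DSlant}. The only point demanding a moment's attention is the \emph{choice of slicing direction} — one must peel off the $Z$-coordinate (the last factor $G_2$), so that each fibre is a bona fide truncated copy of $\bbZ$ governed by the single value $u(a)$; slicing instead along one of the $X_i$ would not present the paraboloid in the truncated-line form that Proposition \ref{Prop:Truncated30DSlant} requires.
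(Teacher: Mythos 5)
Your proof is correct and takes essentially the same route as the paper: the paper obtains Corollary \ref{Cor:LowerParaboloid} as the special case $f(x_1,\cdots,x_n)=x_1^2+\cdots+x_n^2$ of Corollary \ref{Cor:LowerWhatever3D}, whose proof is precisely your decomposition of $W_\pm$ into truncated vertical lines over $\bbZ^n$ followed by an appeal to Proposition \ref{Prop:Truncated30DSlant}. The only cosmetic difference is your (correct) remark that no floor/ceiling rounding is needed here, since $u$ is already integer-valued.
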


More generally, we have the following result. 

\begin{corollary}
\label{Cor:LowerWhatever3D}
Let $n$ be a positive integer. 
Let $f:\bbZ^n \to \bbR$ be a function and let $W_\pm$ denote the subset 
$$W_\pm 
=\{
(x_1, \cdots , x_n, x_{n+1})\in \bbZ^{n+1}\,|\, \pm (x_{n+1} -f(x_1, \cdots, x_n))>0 
\}
$$
of $\bbZ^{n+1}$. 
None of the subsets $W_+, W_-$ admit any minimal complement in $\bbZ^{n+1}$. 
\end{corollary}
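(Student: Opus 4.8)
The plan is to deduce this directly from Proposition \ref{Prop:Truncated30DSlant}, in exactly the way Corollary \ref{Cor:LowerWhatever} was deduced from Proposition \ref{Prop:Truncated2D}. The entire content of the statement is that truncation of $\bbZ^{n+1}$ along the last coordinate by the graph of a real-valued function $f$ falls into the class of truncated subsets already ruled out. So the only real task is to repackage the real-valued strict inequalities $x_{n+1} > f(x_1,\dots,x_n)$ and $x_{n+1} < f(x_1,\dots,x_n)$ as inequalities between the integer $x_{n+1}$ and an integer-valued truncation function, and then invoke the proposition.

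First I would set $G_1 = \bbZ^n$ and $G_2 = \bbZ$. Since $G_2$ is free of rank $1>0$, Proposition \ref{Prop:Truncated30DSlant} applies with $A = \bbZ^n$ and the identity isomorphism $\varphi = \mathrm{id}: \bbZ \xra{\sim} \bbZ$, so that $\bbZ_{<c}$ and $\bbZ_{>c}$ are just the integer rays $(-\infty,c)\cap\bbZ$ and $(c,\infty)\cap\bbZ$. Under the identification $\bbZ^{n+1} = G_1\times G_2$, the $a$-slice of $W_+$ is $\{m\in\bbZ \mid m > f(a)\}$ and the $a$-slice of $W_-$ is $\{m\in\bbZ \mid m < f(a)\}$. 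The key elementary observation is that for every real $r$ and every integer $m$ one has $m > r \iff m > \lfloor r\rfloor$ and $m < r \iff m < \lceil r\rceil$. Hence, defining $u:\bbZ^n\to\bbZ$ by $u(a)=\lfloor f(a)\rfloor$ gives $W_+ = \cup_{a\in\bbZ^n}\{a\}\times \bbZ_{>\varphi(u(a))}$, and defining $u':\bbZ^n\to\bbZ$ by $u'(a) = \lfloor f(a)\rfloor + \lceil f(a)-\lfloor f(a)\rfloor\rceil$ (which equals $\lceil f(a)\rceil$ in all cases while remaining manifestly integer-valued) gives $W_- = \cup_{a\in\bbZ^n}\{a\}\times \bbZ_{<\varphi(u'(a))}$. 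These are exactly the two forms excluded by Proposition \ref{Prop:Truncated30DSlant}, so neither $W_+$ nor $W_-$ admits a minimal complement in $\bbZ^{n+1}$. Note that because the proposition already rules out both the upper and the lower rays, no reflection argument (unlike the $\sigma:(x,y)\mapsto(x,-y)$ step in Proposition \ref{Prop:Truncated2D}) is needed to pass between the two cases.

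There is no genuine obstacle here: the result is a special case of Proposition \ref{Prop:Truncated30DSlant} wearing different clothes. The single delicate point, which I would flag explicitly, is that the conversion from the strict real inequality to an integer truncation must be valid \emph{simultaneously} for integer and non-integer values of $f(a)$; the floor/ceiling bookkeeping above is arranged precisely so that the equivalences $m > r \iff m > \lfloor r\rfloor$ and $m < r \iff m < \lceil r\rceil$ hold for all real $r$, which is exactly why the ceiling term is written in the form $\lceil f(a)-\lfloor f(a)\rfloor\rceil$ as in the proof of Corollary \ref{Cor:LowerWhatever}.
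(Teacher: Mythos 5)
Your proposal is correct and is essentially the paper's own proof: the paper likewise writes $W_+$ as the union of the truncated lines $\{(m_1,\cdots,m_n)\}\times(\lfloor f(m_1,\cdots,m_n)\rfloor,\infty)$ and $W_-$ as the union of $\{(m_1,\cdots,m_n)\}\times(-\infty,\lfloor f\rfloor+\lceil f-\lfloor f\rfloor\rceil)$, then invokes Proposition \ref{Prop:Truncated30DSlant}. Your explicit floor/ceiling equivalences and the remark that no reflection step is needed only make the same argument more transparent.
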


\begin{proof}
Note that $W_+$ is the union of the truncated lines $$\{(m_1, \cdots, m_n)\} \times (\lfloor f(m_1, \cdots, m_n)\rfloor , \infty)$$ as $(m_1, \cdots, m_n)$ varies in $\bbZ^n$, and $W_-$ is the union of the truncated lines $$\{(m_1, \cdots, m_n)\} \times (-\infty, \lfloor f(m_1, \cdots, m_n)\rfloor + \lceil f(m_1, \cdots, m_n)- \lfloor f(m_1, \cdots, m_n)\rfloor \rceil)$$ as $(m_1, \cdots, m_n)$ varies in $\bbZ^n$. So by Proposition \ref{Prop:Truncated30DSlant}, it follows that none of $W_+, W_-$ admit any minimal complement in $\bbZ^{n+1}$.
\end{proof}

%%\vspace*{5mm}
\section{Subsets containing lines (spiked subsets)}
In the following, $k$ denotes a positive integer. 

\begin{definition}
[\textbf{Moderation}]
\label{moderation}
Let $u:\bbZ^k \to \bbZ$ be a function. A function $v:\bbZ^k\to \bbZ$ is said to be a \textnormal{moderation of} $u$ if for each $x_0\in \bbZ^k$, the function 
$$x\mapsto u(x) + v(x_0-x)$$
 defined on $\bbZ^k$ is bounded above. 
\end{definition}

For each $x\in \bbZ^k$ and for each positive integer $n$, let $B (x,n)$ denote the set of elements $y$ in $\bbZ^k$ such that $||x-y||^2< n$ holds. 

\begin{definition}
[\textbf{Spiked subsets}]
\label{Spiked subsets}
A subset $X$ of $\bbZ^{k+1}$ is said to be a \textnormal{spiked subset} if it satisfies 
$$\calB\times \bbZ
\subseteq
X
\subseteq 
(\calB\times \bbZ )
\bigsqcup 
\left(
\sqcup 
_{x\in \bbZ^k\setminus \calB}
\left(
\{
x
\}
\times 
(-\infty, u(x))
\right)
\right)$$
for some nonempty subset $\calB$ of $\bbZ^k$ and some function $u:\bbZ^k\to \bbZ$. The set $\calB$ is called the \textnormal{base} of $X$.
\end{definition}

\begin{proposition}[Spiked vs eventually periodic]
	\label{Spiked vs eventually periodic}
For each integer $d\geq 2$, no eventually periodic subset of $\bbZ^d$ is spiked, and no spiked subset of $\bbZ^d$ is eventually periodic. 
\end{proposition}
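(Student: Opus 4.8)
The two assertions together amount to the single statement that, inside $\bbZ^d$, the class of spiked subsets and the class of eventually periodic subsets are disjoint; I would prove this one disjointness by contradiction, so suppose some $X\subseteq\bbZ^d$ is at once spiked and eventually periodic. Write $d=k+1$ and $\bbZ^d=\bbZ^k\times\bbZ$, calling the last coordinate vertical, and for $x\in\bbZ^k$ let $X_x:=\{n\in\bbZ\,|\,(x,n)\in X\}$ be the column of $X$ over $x$. From the definition of a spiked subset with base $\calB$ and bounding function $u\colon\bbZ^k\to\bbZ$ I would extract exactly two facts: over the base every column is full, $X_b=\bbZ$ for $b\in\calB$ (so these columns are unbounded above), whereas off the base every column is bounded above, $X_x\subseteq(-\infty,u(x))$ for $x\in\bbZ^k\setminus\calB$.

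Next I would convert the definition of eventual periodicity from \cite{MinComp1} into its one relevant consequence: $X$ coincides with a genuinely periodic set $P$ --- a union of cosets of a finite-index subgroup $L\le\bbZ^d$ --- outside a region meeting each vertical line in a bounded set. Because $L$ has finite index, the composite $\{0\}\times\bbZ\hookrightarrow\bbZ^d\twoheadrightarrow\bbZ^d/L$ has finite image, so $L$ contains a nonzero vertical vector $(0,\dots,0,m)$ with $m\geq1$. Hence $P$ is invariant under vertical translation by $m$, every column $P_x$ is a period-$m$ subset of $\bbZ$, and such a column is either empty or unbounded both above and below. Comparing columns, each $X_x$ differs from $P_x$ in only finitely many integers, so every column of $X$ is, up to a finite set, either empty or unbounded above; in particular each $X_x$ is either finite or unbounded above.

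The contradiction now comes from confronting this dichotomy with the off-base columns. Choose $x_0\in\bbZ^k\setminus\calB$ with $X_{x_0}$ infinite: then $X_{x_0}\subseteq(-\infty,u(x_0))$ is infinite and bounded above, so it is neither finite nor unbounded above, contradicting the previous paragraph. This closes the argument for the genuinely spiked sets. I expect the crux to be precisely the production of such an $x_0$, i.e. reconciling this outline with the exact hypotheses of \cite{MinComp1} and ruling out the degenerate configurations: if the base is all of $\bbZ^k$, or if every off-base fibre is empty or finite (so that $X$ is, up to a finite set, of the vertically-constant shape $\calB\times\bbZ$), then $X$ can indeed be periodic and the obstruction disappears. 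I would therefore first dispose of these degenerate shapes using the non-degeneracy built into the definition of a spiked subset --- a proper base $\calB$ together with genuinely truncated, hence infinite, off-base fibres, of which there are then infinitely many so that at least one lies outside the bounded exceptional region --- after which the vertical-periodicity dichotomy applies and delivers the contradiction.
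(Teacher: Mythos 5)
Your proof has a genuine gap, and it sits exactly where you flagged it: the degenerate configurations cannot be excluded, because the two definitional facts you lean on are not in the definitions. First, Definition \ref{Spiked subsets} imposes no non-degeneracy at all: the base $\calB$ is only required to be nonempty (it may be all of $\bbZ^k$), and the off-base fibres may be empty or finite --- nothing forces them to be ``genuinely truncated, hence infinite''. So sets such as $\calB\times\bbZ$ for an arbitrary nonempty $\calB\subseteq\bbZ^k$ (e.g.\ $2\bbZ\times\bbZ$), or even $\bbZ^{k+1}$ itself, are spiked; the proposition asserts these are not eventually periodic, and your column dichotomy says nothing about them, as you concede. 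Second, your surrogate for eventual periodicity --- ``coincides with a union of cosets of a finite-index subgroup $L\le\bbZ^d$ outside a region meeting each vertical line in a bounded set'' --- cannot be a consequence of the definition in \cite{MinComp1}: under that reading $2\bbZ\times\bbZ$ (and $\bbZ^d$) would be eventually periodic as well as spiked, so the proposition would simply be false. The definition in \cite{MinComp1} carries a one-sided (directional) boundedness, and the consequence of it that is actually needed is the one the paper invokes: an eventually periodic set never contains the inverses of infinitely many of its elements, and eventual periodicity is preserved under translation.

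The paper's proof is shorter and uniform over all spiked sets, degenerate or not: any spiked set contains a full line $\{b\}\times\bbZ$, so a suitable translate contains the rank-one subgroup $\{0\}\times\bbZ$ and hence contains the inverses of infinitely many of its elements; since that translate is again eventually periodic and no eventually periodic set has this inverse property, no spiked set is eventually periodic (and the second assertion of the proposition is just the contrapositive restatement). To salvage your fibre-wise argument you would need to (i) work from the actual definition of eventual periodicity in \cite{MinComp1} rather than a periodic-up-to-vertically-bounded-error model, and (ii) extend it to cover spiked sets whose off-base fibres are empty or finite --- precisely the case your argument cannot reach, whereas the line-plus-inverses argument handles it for free.
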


\begin{proof}
Suppose there exists an eventually periodic subset $W$ of $\bbZ^d$ for some integer $d\geq 2$ such that $W$ is spiked. So certain translate $W'$ of $W$ contains a subgroup of $\bbZ^d$ of rank one and hence contains the inverse of infinitely many of its elements. Note that $W'$ is also an eventually periodic subset of $\bbZ^d$, and no eventually periodic subset contains the inverses of infinitely many of its elements. This proves the first statement. Then the second statement follows from the first statement. 
\end{proof}

\begin{proposition}
\label{Prop:LinesNoise}
Let $u:\bbZ^k\to \bbZ$ be a function. Suppose $v: \bbZ^k\to \bbZ$ be a moderation $u$. Let $\calB$ be a subset of $\bbZ^k$ admitting a minimal complement $M$ in $\bbZ^k$. Then the subset 
$$M_v:=\{(x, v(x))\,|\, x\in M\}$$
of $\bbZ^{k+1}$ is a minimal complement of any subset $X$ of $\bbZ^{k+1}$ satisfying 
$$\calB\times \bbZ
\subseteq
X
\subseteq 
(\calB\times \bbZ )
\bigsqcup 
\left(
\sqcup 
_{x\in \bbZ^k\setminus \calB}
\left(
\{
x
\}
\times 
(-\infty, u(x))
\right)
\right).
$$
\end{proposition}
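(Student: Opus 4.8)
The plan is to verify the two defining properties of a minimal complement separately: first that $M_v$ is a complement of $X$ in $\bbZ^{k+1}$, and then that no single point of $M_v$ can be deleted while retaining the complement property. The complement property is the easy half, and I would note that the moderation hypothesis plays no role in it. Take any $(z,t)\in\bbZ^{k+1}$. Since $M$ is a complement of $\calB$ in $\bbZ^k$, write $z=b+m$ with $b\in\calB$ and $m\in M$. Because $\calB\times\bbZ\subseteq X$, the point $(b,\,t-v(m))$ lies in $X$, and adding $(m,v(m))\in M_v$ yields $(b+m,\,t)=(z,t)$. Hence $X+M_v=\bbZ^{k+1}$, using only the lower containment of $X$ and the fact that $\calB+M=\bbZ^k$.

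For minimality, fix $m_0\in M$; I would produce a point of $\bbZ^{k+1}$ missed by $X+(M_v\setminus\{(m_0,v(m_0))\})$. Since $M$ is a \emph{minimal} complement of $\calB$ in $\bbZ^k$, there is a $z_0\in\bbZ^k$ with $z_0\notin\calB+(M\setminus\{m_0\})$. Now suppose toward a contradiction that $(z_0,t_0)=(w_1,w_2)+(m,v(m))$ for some $(w_1,w_2)\in X$ and some $m\in M\setminus\{m_0\}$. Then $w_1=z_0-m$. If $w_1\in\calB$, then $z_0=w_1+m\in\calB+(M\setminus\{m_0\})$, contradicting the choice of $z_0$; hence $w_1\notin\calB$, and the upper containment of $X$ forces $w_2<u(w_1)=u(z_0-m)$, so that $t_0=w_2+v(m)<u(z_0-m)+v(m)$.

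The crux is to bound the right-hand side uniformly over all admissible $m$, and this is exactly what moderation supplies: setting $x=z_0-m$ gives $u(z_0-m)+v(m)=u(x)+v(z_0-x)$, and by Definition \ref{moderation} (with $x_0=z_0$) the function $x\mapsto u(x)+v(z_0-x)$ is bounded above, say by a constant $C(z_0)$. Consequently every element of $X+(M_v\setminus\{(m_0,v(m_0))\})$ whose first coordinate equals $z_0$ has last coordinate strictly below $C(z_0)$, so choosing any $t_0>C(z_0)$ exhibits a point $(z_0,t_0)$ outside that set. Thus $M_v\setminus\{(m_0,v(m_0))\}$ fails to be a complement, and since $m_0$ was arbitrary, $M_v$ is minimal. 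I expect the main obstacle to be precisely this two-case split on whether $w_1$ lands in the base $\calB$ or in a spike: the base columns are full lines reaching arbitrarily high but, over $z_0$, only through $m_0$; the spike columns are bounded above, and moderation is the mechanism that controls all of them simultaneously. Making these two observations cooperate — and recognizing that the definition of moderation is tailored exactly to produce the uniform bound $C(z_0)$ — is the heart of the argument.
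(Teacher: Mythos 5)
Your proof is correct and takes essentially the same route as the paper's: both use minimality of $M$ to produce a column $\{z_0\}\times\bbZ$ untouched by the base contribution $(\calB\times\bbZ)+(M_v\setminus\{(m_0,v(m_0))\})$, and both invoke the moderation bound to show the spike contributions to that column lie below a uniform constant, so that sufficiently high points of the column are missed. The only difference is presentational --- you argue element-wise and conclude directly, whereas the paper phrases the same two facts as set containments and a contradiction.
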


\begin{proof}
For any $x\in \bbZ^k$, we have $$(\calB\times \bbZ) + (x, v(x)) = (\calB+x)\times \bbZ.$$ Since $$\calB+M = \bbZ^k,$$ it follows that  $$(\calB\times \bbZ) + M_v= \bbZ^{k+1}.$$ So $M_v$ is a complement of $\calB\times \bbZ$ in $\bbZ^{k+1}$. Since $X$ contains $\calB\times \bbZ$, the set $M_v$ is also a complement of $X$. 

Suppose $M_v$ is not a minimal complement of $X$ in $\bbZ^{k+1}$. Then for some $x'\in M$, the set $M_v\setminus \{(x', v(x'))\}$ is also a complement of $X$. Since $M$ is a minimal complement to $\calB$, it follows that $$x_0\notin\calB+ (M\setminus \{x'\})$$ for some element $x_0\in \bbZ^k$. So $$\Big( (\calB\times \bbZ) + (M_v\setminus \{(x', v(x'))\})\Big)\cap \Big(\{x_0\}\times \bbZ\Big) = \emptyset.$$

Since $v$ is a moderation of $u$, it follows that there exists an integer $m_0$ such that $$u(x)+v(x_0 - x)\leq m_0 \text{ for any }x\in \bbZ^k.$$
Note that if $$\left(
\{
x
\}
\times 
(-\infty, u(x)) \right)
+(y, v(y)) \subseteq\{x_0\} \times \bbZ$$ for some elements $x,y\in \bbZ^k$, then $y$ is equal to $x_0-x$ and hence 
$$\left(
\{
x
\}
\times 
(-\infty, u(x)) \right)
+(y, v(y))
\subseteq \{x_0\} \times (-\infty, m_0).
$$ So
$$\left( \left(
\{
x
\}
\times 
(-\infty, u(x)) \right)
+M_v \right) 
\cap 
\left(
\{x_0\}\times \bbZ\right) \subseteq\{x_0\} \times (-\infty, m_0).$$ Consequently, 
$$\left( 
\left(
\cup _{x\in \bbZ^k}
\left(
\{
x
\}
\times 
(-\infty, u(x))
\right)
\right)
+M_v \right) 
\cap 
\left(
\{x_0\}\times \bbZ
\right)
\subseteq\{x_0\} \times (-\infty, m_0).$$ This contradicts the assumption that $M_v$ is not a minimal complement of $X$ in $\bbZ^{k+1}$. Hence the result follows. 
\end{proof}

\begin{lemma}
\label{Lemma:ModerationExists}
Any function $u:\bbZ^k\to \bbZ$ admits a moderation. 
\end{lemma}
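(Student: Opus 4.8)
The goal is to construct, for an arbitrary function $u:\bbZ^k\to\bbZ$, a moderation $v$, i.e.\ a function such that for every fixed $x_0\in\bbZ^k$ the map $x\mapsto u(x)+v(x_0-x)$ is bounded above on $\bbZ^k$. The plan is to build $v$ so that it decays fast enough (towards $-\infty$) to dominate the growth of $u$. The natural idea is to let $v$ be very negative far from the origin, with a rate of decay that outpaces $u$, so that when we shift by $x_0-x$ the term $v(x_0-x)$ kills the contribution of $u(x)$ once $x$ is large.

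Concretely, I would first bound $u$ on successive balls. Using the notation $B(0,n)$ introduced before Definition~\ref{Spiked subsets}, set
$$
U_n := \max_{x\in B(0,n+1)} u(x),
$$
which is a finite integer since each ball is a finite subset of $\bbZ^k$. I would then define $v$ by making its value at a point $y$ depend only on the ball in which $y$ first lands: for $y\in B(0,n+1)\setminus B(0,n)$ (with the convention $B(0,0)=\emptyset$), set
$$
v(y) := -\,U_{\|y\|^2 + n_0}
$$
for a suitably chosen shift, or more simply define $v(y)$ to be a sufficiently large negative integer depending on $\|y\|^2$. The key structural point is that $v(y)\to -\infty$ as $\|y\|\to\infty$ at a rate controlled by the partial maxima $U_n$ of $u$, so that adding $u$ (which is controlled by the same maxima) leaves a bounded-above sum.

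The verification is the heart of the argument. Fix $x_0$. For $x$ ranging over $\bbZ^k$, split into the region where $x$ is close to $x_0$ (a finite set, on which $u(x)+v(x_0-x)$ takes only finitely many values and is trivially bounded) and the region where $x$ is far. In the far region, $x_0-x$ has large norm, so $v(x_0-x)$ is very negative; by the triangle inequality $\|x\|$ is comparable to $\|x_0-x\|$ up to the fixed quantity $\|x_0\|$, so $u(x)\le U_{\|x\|^2}$ is dominated by the corresponding negative value of $v(x_0-x)$. Choosing the decay of $v$ to beat $U_n$ along every such diagonal shift makes $u(x)+v(x_0-x)$ bounded above (indeed tending to $-\infty$), uniformly in the far region. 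Combining the two regions gives the desired global upper bound.

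The main obstacle I anticipate is bookkeeping the comparison between $\|x\|$ and $\|x_0-x\|$: one must ensure the decay rate of $v$ is fast enough to absorb the growth of $u$ \emph{after} the shift by $x_0$, and that this works simultaneously for all $x_0$ despite $v$ being defined once and for all. The clean way around this is to tie $v$'s decay not to $u$ on the shifted ball but to the running maximum $U_n$ of $u$ over an enlarged ball, so that the fixed additive loss $\|x_0\|$ in the norm estimate only shifts the index $n$ by a bounded amount and never breaks the domination. Once $v$ is defined via these partial maxima $U_n$ with a strictly decreasing (in $n$) negative value, the boundedness for each $x_0$ follows from a routine norm estimate, so no genuinely hard analytic input is required beyond finiteness of balls in $\bbZ^k$.
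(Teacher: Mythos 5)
Your overall strategy---letting $v$ decay like the negative of the running maxima of $u$ over balls---is the same idea the paper uses, but your concrete construction has a genuine gap, and it sits exactly at the step you flagged as the main obstacle. You set $U_n=\max_{B(0,n+1)}u$ and $v(y)=-U_{\|y\|^2+n_0}$ with a \emph{fixed} shift $n_0$, and you justify domination by claiming that the additive loss $\|x_0\|$ in the triangle inequality ``only shifts the index $n$ by a bounded amount.'' That claim is false: your index is the \emph{squared} norm, and
$$\|x\|^2-\|x_0-x\|^2=2\langle x,x_0\rangle-\|x_0\|^2$$
is unbounded in $x$ for every $x_0\neq 0$, so no constant $n_0$ can absorb it. Concretely, take $k=1$, $u(m)=m^4$, and $x_0=1$. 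Then $U_n=\lfloor\sqrt n\rfloor^4$, and for all large $x$ one has $\lfloor\sqrt{(x-1)^2+n_0}\rfloor=x-1$, hence
$$u(x)+v(x_0-x)=x^4-(x-1)^4\longrightarrow\infty,$$
so your $v$ is not a moderation of $u$. Indexing by the norm instead of its square does not help: the shift needed there is $\|x_0\|$ itself, which cannot be fixed in advance of $x_0$, and a similar counterexample (peaks of $u$ along the ray through $x_0$) defeats any constant shift.

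The repair is to make the enlargement of the ball multiplicative rather than additive, or better, to move its center. With origin-centered balls, $v(y)=-U_{4\|y\|^2}$ works: once $\|x_0-x\|\geq\|x_0\|$ (which fails for only finitely many $x$), the triangle inequality gives $\|x\|^2\leq\left(\|x_0-x\|+\|x_0\|\right)^2\leq 4\|x_0-x\|^2$, so $u(x)\leq U_{4\|x_0-x\|^2}=-v(x_0-x)$. The paper's proof instead takes
$$v(y)=-\max u\left(B(-y,\|y\|^2+1)\right),$$
i.e., the ball is centered at $-y$; then $-v(x_0-x)$ is the maximum of $u$ over a ball centered at $x-x_0$, and the only thing to check is that this ball contains $x$, which amounts to $\|x_0\|^2<\|x_0-x\|^2+1$ and again fails for only finitely many $x$. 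Centering at $-y$ is what makes the bookkeeping trivial; if you insist on balls centered at the origin, you must pay with a genuinely superadditive growth of the radius, not the bounded index shift your verification relies on.
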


\begin{proof}
Define $v:\bbZ^k\to \bbZ$ by 
$$v(x)= -\max
\left(u\left(
B(-x, ||x||^2+1)
\right)\right), 
\quad 
\text{ for } x\in \bbZ^k.
$$
Let $x_0$ be an element of $\bbZ^k$. Then for any $x\in \bbZ^k$ satisfying $||x||^2\geq ||x_0||^2$, $$x \in B(x-x_0, ||x||^2+1).$$ Hence 
$$
u(x) \leq \max
\left(u\left(
B(x-x_0, ||x||^2+1)
\right)\right) = -v(x_0-x).
$$
Since $||x||^2<||x_0||^2$ holds only for finitely many elements of $x\in \bbZ^k$, it follows that $v$ is a moderation of $u$. This proves the result.
\end{proof}

\begin{theorem}
\label{Thm:LinesNoise}
Let $u:\bbZ^k\to \bbZ$ be a function. Let $\calB$ be a subset of $\bbZ^k$ admitting a minimal complement $M$ in $\bbZ^k$. Then $u$ admits a moderation and for any moderation $v$ of $u$, the subset 
$$M_v:=\{(x, v(x))\,|\, x\in M\}$$
of $\bbZ^{k+1}$ is a minimal complement of any subset $X$ of $\bbZ^{k+1}$ satisfying 
\begin{equation}
\label{Eqn:ContainmentX}
\calB\times \bbZ
\subseteq
X
\subseteq 
(\calB\times \bbZ )
\bigsqcup 
\left(
\sqcup 
_{x\in \bbZ^k\setminus \calB}
\left(
\{
x
\}
\times 
(-\infty, u(x))
\right)
\right).
\end{equation}
\end{theorem}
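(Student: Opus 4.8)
The plan is to observe that Theorem \ref{Thm:LinesNoise} is simply the concatenation of two results already established in the excerpt: Lemma \ref{Lemma:ModerationExists}, which guarantees that every function $u:\bbZ^k\to\bbZ$ admits a moderation, and Proposition \ref{Prop:LinesNoise}, which asserts that once a moderation $v$ of $u$ is fixed, the graph $M_v$ is a minimal complement of any $X$ sandwiched as in \eqref{Eqn:ContainmentX}. So the proof should be a one-line reduction rather than a fresh argument.

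Concretely, I would first invoke Lemma \ref{Lemma:ModerationExists} applied to the given $u$ to conclude that $u$ admits a moderation, which settles the first assertion of the theorem. Then, for \emph{any} moderation $v$ of $u$, I would feed the triple $(u, v, \calB)$ into Proposition \ref{Prop:LinesNoise}: its hypotheses are precisely that $v$ is a moderation of $u$ and that $\calB$ admits a minimal complement $M$ in $\bbZ^k$, both of which are in force here. The conclusion of Proposition \ref{Prop:LinesNoise} is verbatim the conclusion of Theorem \ref{Thm:LinesNoise}, namely that $M_v=\{(x,v(x))\,|\,x\in M\}$ is a minimal complement of every $X$ obeying the containment \eqref{Eqn:ContainmentX}. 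Since Proposition \ref{Prop:LinesNoise} holds for an arbitrary moderation, the ``for any moderation $v$'' quantifier in the theorem is automatically covered.

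There is essentially no obstacle to overcome, since all the genuine work has already been carried out: the existence of a moderation is the content of Lemma \ref{Lemma:ModerationExists} (via the explicit construction $v(x)=-\max(u(B(-x,\|x\|^2+1)))$), and the minimality/complement verification is the content of Proposition \ref{Prop:LinesNoise} (via the boundedness estimate $u(x)+v(x_0-x)\le m_0$ that confines the relevant sumset to a half-line). The only point worth stating explicitly is that Theorem \ref{Thm:LinesNoise} strengthens Proposition \ref{Prop:LinesNoise} by dropping the standing assumption that a moderation is given in advance; Lemma \ref{Lemma:ModerationExists} is exactly what removes that hypothesis, so the theorem follows immediately from the two cited results.
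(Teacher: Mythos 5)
Your proposal is correct and is exactly the paper's own proof: the paper's proof of Theorem \ref{Thm:LinesNoise} reads verbatim ``It follows from Proposition \ref{Prop:LinesNoise} and Lemma \ref{Lemma:ModerationExists}.'' Your only addition is to spell out that the lemma supplies the existence of a moderation and the proposition handles the minimality claim for any given moderation, which is precisely the intended reduction.
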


\begin{proof}
It follows from Proposition \ref{Prop:LinesNoise} and Lemma \ref{Lemma:ModerationExists}.
\end{proof}

\begin{remark}
Note that $\calB\times \bbZ$ is the only subset of $\bbZ^{k+1}$ which satisfies Equation \eqref{Eqn:ContainmentX} and is equal to the cartesian product of a subset of $\bbZ^a$ and a subset of $\bbZ^b$ for some positive integers $a,b$ with $a+b = k+1$. Thus the subsets dealt in Theorem \ref{Thm:LinesNoise} (other than $\calB\times \bbZ$) are different from what has been considered in \cite[\S 5]{MinComp1}, and hence does not fall under the purview of \cite[\S 5]{MinComp1}.
\end{remark}

\begin{corollary}\label{Cor4.8}
Let $W$ denote the subset of $\bbZ^2$ consisting of points lying below the parabola $Y=X^2$ and points lying on the $y$-axis, i.e.,
$$W = 
\{
(x, y)\in \bbZ^2 \,| \, y - x^2 <0
\}
\cup (\{0\}\times \bbZ).$$
The set 
$$M=
\{
(t, -2t^2)\in \bbZ^2 \,| \, t\in \bbZ
\}$$
is a minimal complement of $W$ in $\bbZ^2$  \textnormal{[see Fig.\ref{Fig7}]}.
\end{corollary}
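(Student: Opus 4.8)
The plan is to realize $W$ as a spiked subset of $\bbZ^2$ and then invoke Theorem \ref{Thm:LinesNoise} directly with $k=1$. First I would identify the base and the bounding function: set $\calB = \{0\}\subseteq \bbZ$ and $u:\bbZ \to \bbZ$, $u(x) = x^2$. With these choices the defining containment \eqref{Eqn:ContainmentX} is in fact an equality, since the points of $W$ with first coordinate $0$ form exactly $\{0\}\times \bbZ = \calB\times \bbZ$, while for $x\neq 0$ the points of $W$ on the vertical line through $x$ are precisely $\{x\}\times (-\infty, x^2) = \{x\}\times (-\infty, u(x))$. Thus $W$ is a spiked subset with base $\calB = \{0\}$ in the sense of Definition \ref{Spiked subsets}.

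Next I would verify the two hypotheses of Theorem \ref{Thm:LinesNoise}. Since $0$ is the identity of $\bbZ$, the only complement of $\calB = \{0\}$ in $\bbZ$ is $\bbZ$ itself, and it is automatically minimal because deleting any element $m$ leaves $\{0\} + (\bbZ\setminus\{m\}) = \bbZ\setminus\{m\}\subsetneq \bbZ$; hence $M_0 := \bbZ$ is a minimal complement of $\calB$ in $\bbZ$. It then remains to exhibit a moderation $v$ of $u$ whose graph over $M_0$ is the prescribed set $M$; the natural candidate is $v(x) = -2x^2$, for which $M_v = \{(x, -2x^2)\,|\, x\in \bbZ\} = M$.

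The one genuine computation is checking that $v(x) = -2x^2$ is indeed a moderation of $u(x) = x^2$ in the sense of Definition \ref{moderation}. For a fixed $x_0\in \bbZ$ I would complete the square:
$$u(x) + v(x_0 - x) = x^2 - 2(x_0 - x)^2 = -(x - 2x_0)^2 + 2x_0^2 \leq 2x_0^2,$$
so the function $x\mapsto u(x) + v(x_0 - x)$ is bounded above for every $x_0$, as required. With both hypotheses in place, Theorem \ref{Thm:LinesNoise} applied to $X = W$ yields that $M = M_v$ is a minimal complement of $W$ in $\bbZ^2$, completing the argument.

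I do not expect a serious obstacle here: the content lies entirely in recognizing the correct values of $\calB$, $u$ and $v$ so that the hypotheses of Theorem \ref{Thm:LinesNoise} are met, after which the conclusion is immediate. The only place requiring care is confirming the moderation inequality, which the completing-the-square identity above settles at once.
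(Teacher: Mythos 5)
Your proposal is correct and follows essentially the same route as the paper's proof: the same choices $\calB=\{0\}$, $u(t)=t^2$, $v(t)=-2t^2$, the same moderation inequality (your completed square $-(x-2x_0)^2+2x_0^2\leq 2x_0^2$ is just a rewriting of the paper's bound), and the same final appeal to Theorem \ref{Thm:LinesNoise}. The only difference is that you spell out the verification that $W$ satisfies the containment \eqref{Eqn:ContainmentX} and that $\bbZ$ is a minimal complement of $\{0\}$, which the paper leaves implicit.
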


\begin{proof}
Let $u,v:\bbZ \to \bbZ$ denote the functions defined by $$u(t) = t^2, v(t) = -2t^2, \,\,t\in \bbZ.$$ Then for any fixed $t_0\in \bbZ$, it follows that 
$$
u(t) + v (t_0-t) 
= t^2 - 2 (t_0-t)^2
= -2t_0^2 + 4tt_0 - t^2
\leq 2t_0^2
$$
for any $t\in \bbZ$. Hence $v$ is a moderation of $u$. Since $\bbZ$ is a minimal complement of $\{0\}$ in $\bbZ$, it follows from Theorem \ref{Thm:LinesNoise} that $M$ is a minimal complement of $W$.
\end{proof}

\begin{figure}[h]
	\centering
	\begin{subfigure}{.5\textwidth}
		\centering
		\includegraphics[scale=0.08]{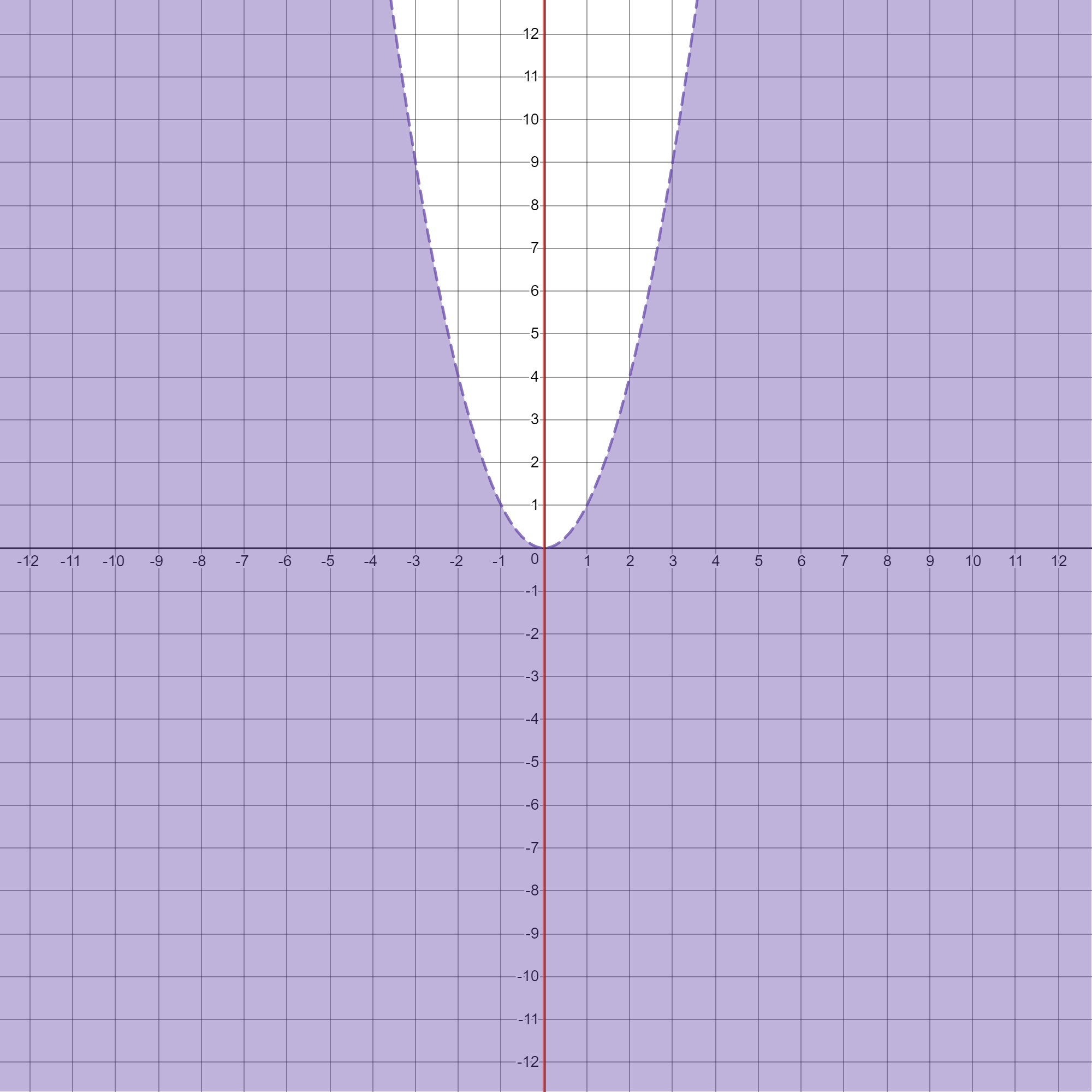}
		\caption{$W$}
		\label{}
	\end{subfigure}%
	\begin{subfigure}{.5\textwidth}
	\centering
	\includegraphics[scale=0.08]{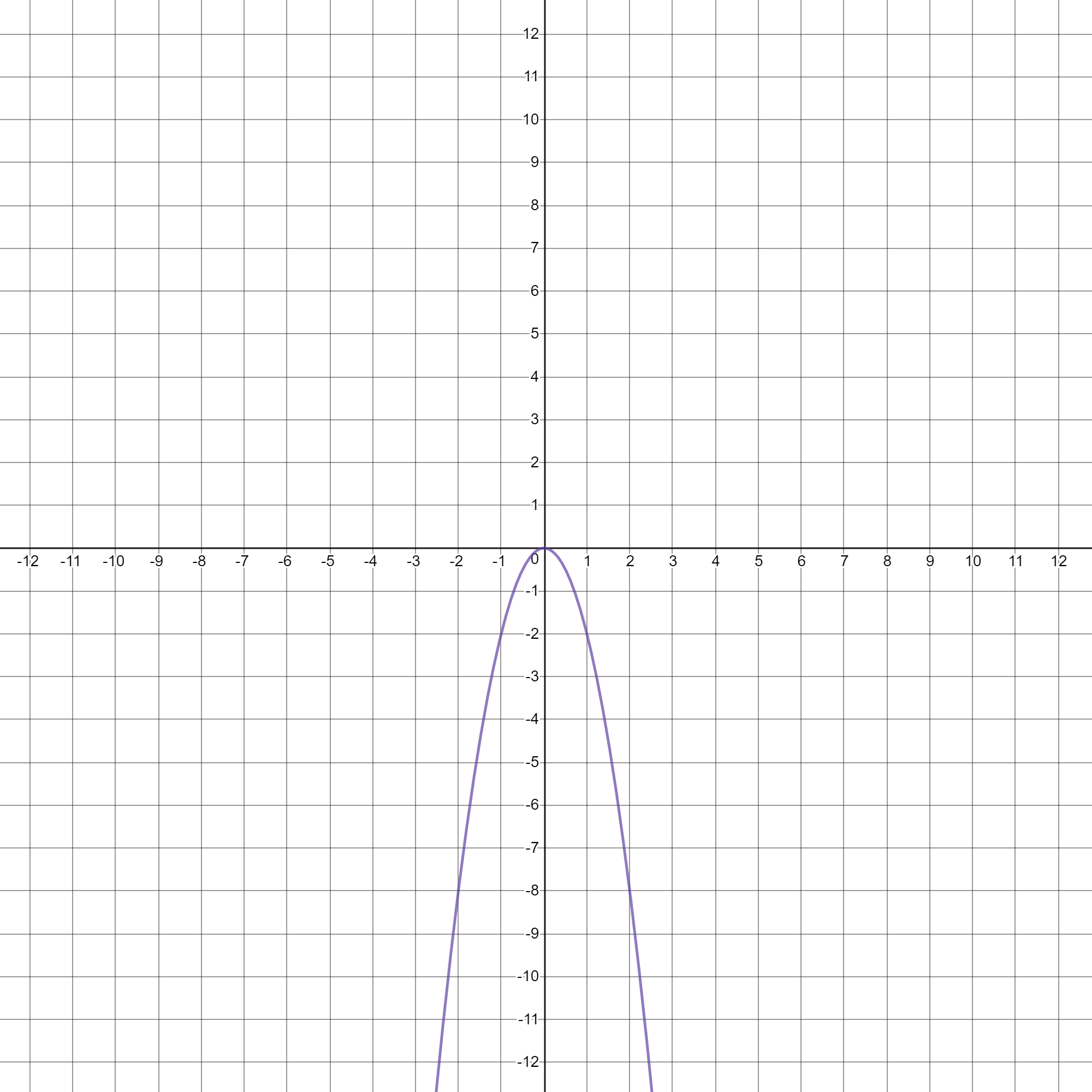}
	\caption{$M$}
	\label{}
	\end{subfigure}
	\caption{Cor. \ref{Cor4.8}. $M$ is a minimal complement to $W$ in $\mathbb{Z}^{2}$}
	\label{Fig7}
\end{figure}

\begin{corollary}\label{Cor4.9}
Let $n$ be a positive integer. Let $W$ denote the subset of $\bbZ^{n+1}$ consisting of points lying below the $n$-dimensional paraboloid $X_{n+1}=X_1^2+\cdots+ X_n^2$ and points lying on the line $X_1 = \cdots = X_n = 0$, i.e.,
$$W = 
\{
(x_1,\cdots, x_n, x_{n+1})\in \bbZ^{n+1} \,| \, x_{n+1} - x_1^2 - \cdots - x_n^2<0
\}
\cup (\{(0,\cdots, 0)\}\times \bbZ).$$
The set 
$$M=
\{
(s_1,\cdots, s_n, -2(s_1^2+\cdots + s_n^2)) \,| \, (s_1,\cdots, s_n)\in \bbZ^n
\}$$
is a minimal complement of $W$ in $\bbZ^{n+1}$. More generally, for any nonempty finite subset $A$ of $\bbZ^n$, the set 
$$\{
(x_1,\cdots, x_n, x_{n+1})\in \bbZ^{n+1} \,| \, x_{n+1} - x_1^2 - \cdots - x_n^2<0
\}
\cup (A\times \bbZ)$$
admits a minimal complement in $\bbZ^{n+1}$. 
\end{corollary}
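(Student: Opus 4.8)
The plan is to reduce Corollary \ref{Cor4.9} directly to Theorem \ref{Thm:LinesNoise} by exhibiting, in each of its two parts, an explicit moderation and an appropriate base together with its minimal complement. For the first part I work with $k=n$, so that the ambient group is $\bbZ^{n+1}=\bbZ^k\times\bbZ$. I take the base to be $\calB=\{(0,\dots,0)\}\subset\bbZ^n$, whose minimal complement in $\bbZ^n$ is the whole group $M=\bbZ^n$ (the only complement of a singleton in a group is the group itself, and it is minimal since removing any point $s$ leaves a set that cannot produce $-s$). I set $u(s_1,\dots,s_n)=s_1^2+\cdots+s_n^2$, and I claim that $v(s_1,\dots,s_n)=-2(s_1^2+\cdots+s_n^2)$ is a moderation of $u$, exactly as in Corollary \ref{Cor4.8}.

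The verification of the moderation property is the computational heart of the argument, and it is a direct generalization of the one-variable estimate. For fixed $x_0=(t_1,\dots,t_n)\in\bbZ^n$ and arbitrary $x=(s_1,\dots,s_n)$, I expand
\begin{align*}
u(x)+v(x_0-x)
&=\sum_{i=1}^n s_i^2-2\sum_{i=1}^n(t_i-s_i)^2\\
&=\sum_{i=1}^n\left(-2t_i^2+4s_it_i-s_i^2\right)\\
&=\sum_{i=1}^n\left(-(s_i-2t_i)^2+2t_i^2\right)
\leq 2\sum_{i=1}^n t_i^2,
\end{align*}
which is a bound independent of $x$. Hence $x\mapsto u(x)+v(x_0-x)$ is bounded above for every $x_0$, so $v$ is a moderation of $u$. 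Since $W$ satisfies the containment \eqref{Eqn:ContainmentX} with this $\calB$ and $u$ (the points below the paraboloid are exactly $\sqcup_{x\notin\calB}\{x\}\times(-\infty,u(x))$ together with the part over $\calB$, and $\calB\times\bbZ$ is the adjoined line), Theorem \ref{Thm:LinesNoise} applies and gives that $M_v=\{(s_1,\dots,s_n,v(s_1,\dots,s_n))\,|\,(s_1,\dots,s_n)\in\bbZ^n\}$ is a minimal complement of $W$, which is precisely the stated set $M$.

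For the ``more generally'' part, the only change is the base: I take $\calB=A$, a nonempty finite subset of $\bbZ^n$. The essential point to establish is that any nonempty finite subset of $\bbZ^n$ admits a minimal complement in $\bbZ^n$; this is exactly the content of the free-abelian case already available from the authors' prior work (Nathanson's theorem for $\bbZ$ and its higher-rank analogue in \cite{MinComp1}), and I would simply cite it rather than reprove it. Granting a minimal complement $M_A$ of $A$ in $\bbZ^n$, the same function $u$ and the same moderation $v$ feed into Theorem \ref{Thm:LinesNoise}, and the set in question (points strictly below the paraboloid, union $A\times\bbZ$) again satisfies \eqref{Eqn:ContainmentX} with base $A$. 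Thus its minimal complement in $\bbZ^{n+1}$ is the graph of $v$ over $M_A$. The main obstacle is not conceptual but bookkeeping: one must check carefully that the defining inequality $x_{n+1}-x_1^2-\cdots-x_n^2<0$ translates into the half-line truncation $(-\infty,u(x))$ with the correct strict/non-strict endpoint, so that $W$ genuinely lies between $\calB\times\bbZ$ and the spiked upper bound; once that identification is made, everything reduces to the already-proven theorem.
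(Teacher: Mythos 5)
Your proposal is correct and follows essentially the same route as the paper's own proof: the same choice of $u$ and $v$, the same moderation estimate $\sum_{i=1}^n(-2t_i^2+4s_it_i-s_i^2)\leq 2\sum_{i=1}^n t_i^2$, the appeal to Theorem \ref{Thm:LinesNoise} with base $\{(0,\dots,0)\}$ (resp.\ $A$), and the citation of \cite{MinComp1} for a minimal complement of the finite set $A$ in $\bbZ^n$. One trivial slip worth fixing: removing $s$ from $\bbZ^n$ leaves $\{0\}+(\bbZ^n\setminus\{s\})=\bbZ^n\setminus\{s\}$, which fails to produce $s$ itself (not $-s$).
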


\begin{proof}
Let $u,v:\bbZ^n \to \bbZ$ denote the functions defined by 
$$u(s_1,\cdots, s_n) = s_1^2 + \cdots + s_n^2, v(s_1,\cdots, s_n) = -2(s_1^2 +\cdots +  s_n^2)$$ for $(s_1,\cdots, s_n)\in \bbZ^n$. Then for any fixed $(s_1',\cdots, s_n')\in \bbZ^n$, it follows that 
$$u(s_1,\cdots, s_n) + v ((s_1',\cdots, s_n')-(s_1,\cdots, s_n))=\sum_{i=1}^n (-2s_i'^2 + 4s_is_i' -s_i^2 )\leq \sum_{i=1}^n 2s_i'^2$$ 
for any $(s_1,\cdots, s_n)\in \bbZ^n$. Hence $v$ is a moderation of $u$. Since $\bbZ^n$ is a minimal complement of $\{(0,\cdots, 0)\}$ in $\bbZ^n$, it follows from Theorem \ref{Thm:LinesNoise} that $M$ is a minimal complement of $W$.

By \cite[Theorem 2.1]{MinComp1}, any nonempty finite subset $A$ of $\bbZ^n$ admits a minimal complement $B$ in $\bbZ^n$. From Theorem \ref{Thm:LinesNoise}, it follows that 
$$M=
\{
(s_1,\cdots, s_n, -2(s_1^2+\cdots + s_n^2)) \,| \, (s_1,\cdots, s_n)\in B
\}$$
is a minimal complement of $$\{
(x_1,\cdots, x_n, x_{n+1})\in \bbZ^{n+1} \,| \, x_{n+1} - x_1^2 - \cdots - x_n^2<0
\}
\cup (A\times \bbZ).$$
\end{proof}

\begin{remark}
\label{Rk:PolyModeration}
Note that if $u$ is a polynomial function, then $v$ can also be taken to be some appropriate polynomial. Indeed, if $x_1, \cdots, x_n$ are integers and $i_1, \cdots, i_n$ are non-negative integers, then the inequalities 
\begin{align*}
x_1^{i_1} \cdots x_n^{i_n}
& \leq |x_1^{i_1}| \cdots |x_n^{i_n}| \\
& \leq (|x_1|\cdots |x_n|)^{\sum i_j} \\
& \leq \frac 1n \left(|x_1|^{n\sum i_j} + \cdots + |x_n|^{n\sum i_j}\right) \\
& \leq \frac 1n \left(x_1^{2n\sum i_j} + \cdots + x_n^{2n\sum i_j}\right)
\end{align*}
hold. 
Hence given a nonzero polynomial $u \in \bbZ[X_1, \cdots, X_n]$ of degree $d$, there exists a positive integer $k$ such that the function $u:\bbZ^n\to \bbZ$ is less than or equal to the polynomial function $p: \mathbb{Z}^{n}\to \mathbb{Z}$ defined by,
$$ p(X_1,\cdots, X_n) = k(X_1^{2nd} + \cdots + X_n^{2nd}).$$
Then the function $v:\bbZ^n\to \bbZ$ defined by the polynomial 
$$v(X_1,\cdots, X_n)= -2k(X_1^{2nd} + \cdots + X_n^{2nd})$$ is a moderation of $u$. 
\end{remark}

%%\vspace*{5mm}
\section{Spiked subsets in arbitrary abelian groups}

In the following, $\calG$ denotes an abelian group. The $k$-fold product $\bbZ^k$ of $\bbZ$ is assumed to be equipped with the dictionary order induced by the order of $\bbZ$.

\begin{definition}[Spiked sets in abelian groups]
A subset $X$ of $\calG$ is said to \textnormal{contain spikes} if there exist subgroups $G_1, G_2$ of $\calG$ such that 
\begin{enumerate}
\item $G_2$ is free of positive rank,
\item $G_1\cap G_2$ is trivial, 
\item $X$ is contained in $G_1G_2$,
\item $X$ contains $g_1 G_2$ for some $g_1\in G_1$.
\end{enumerate}
In this situation, we say that $X$ is \textnormal{spiked} with respect to $G_1, G_2$, and the set 
$$\calB = \{g_1\in G_1 \,|\, g_1G_2\subseteq X\}$$
is called the \textnormal{base} of the spiked set $X$ with respect to $G_1, G_2$.
\end{definition}

The groups $G_1, G_2$ as above have trivial intersection and hence the map 
$$G_1\times G_2\xra{(g_1, g_2)\mapsto g_1g_2} G_1G_2$$ defined by taking products  is an isomorphism. Henceforth we identify the groups $G_1\times G_2, G_1G_2$ via this isomorphism.

\begin{definition}[Bounded spiked set]
A spiked subset $X$ of $\calG$ with respect to $G_1, G_2$ is called $(u, \varphi)$-\textnormal{bounded} if there exists a function $u:G_1\to G_2$ and an isomorphism $$\varphi:G_2\xra{\sim} \bbZ^{\rk G_2}$$ such that 
$$\calB  G_2
\subseteq
X
\subseteq 
\calB G_2
\bigsqcup 
\left(
\bigsqcup_{g_1\in G_1\setminus \calB}
g_1\cdot
\left(
\varphi^\mo \left(\bbZ^{\rk G_2}_{<\varphi(u(g_1))}
\right)
\right)
\right).
$$
\end{definition}

The main guiding example is $\calG = \bbZ^{k_1+k_2}$, $G_1 = \bbZ^{k_1}$, $G_2 = \bbZ^{k_2}, \varphi = \mathrm{id}$.

\begin{definition}[moderation in abelian groups]
Let $\calF_1$ be an abelian group, $\calF_2$ be a free abelian group of positive rank and $u: \calF_1\to \calF_2$ be a function. Given an isomorphism $$\varphi: \calF_2 \to \bbZ ^{\rk \calF_2},$$ we say that a function $v: \calF_1 \to \calF_2$ is a $\varphi$-\textnormal{moderation} of $u$ if for each $x_0\in \calF_1$, the function $$x\mapsto \varphi(v(x) + u(x_0-x))$$ defined on $\calF_1$ is bounded above, i.e., the function $$(a,b)\mapsto \varphi( u(a) + v(b))$$ is bounded above on each fibre of the map $$\calF_1\times \calF_1\xra{(x, x')\mapsto x+x' } \calF_1.$$
\end{definition}

\begin{proposition}
\label{Prop:ModerationExists}
Let $\calF_1$ be a finitely generated abelian group, $\calF_2$ be a free abelian group of positive rank. For any isomorphism 
$$\varphi: \calF_2\xra{\sim} \bbZ^{\rk \calF_2},$$ any function $u: \calF_1 \to \calF_2$ admits a $\varphi$-moderation $v:\calF_1 \to \calF_2$. Moreover, for any isomorphism 
$$\varphi: \calF_2\xra{\sim} \bbZ^{\rk \calF_2}$$ and any subgroup $\calF_2'$ of $\calF_2$ of finite index, any function $u: \calF_1 \to \calF_2$ admits a $\varphi$-moderation $v':\calF_1 \to \calF_2$ such that $v'$ has values in $\calF_2'$. 
\end{proposition}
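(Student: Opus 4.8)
The plan is to reduce the abstract statement to a counting/growth estimate by transporting everything to $\bbZ^{\rk \calF_2}$ via $\varphi$, and then to construct $v$ by a cofinal exhaustion of $\calF_1$ by finite sets, exactly in the spirit of the proof of Lemma \ref{Lemma:ModerationExists}. First I would fix a finite generating set of $\calF_1$ and choose, for each $x\in \calF_1$, a nonnegative integer ``norm'' $\|x\|$ measuring word length (or, since $\calF_1$ is finitely generated abelian, I can write $\calF_1\cong \bbZ^a\times T$ with $T$ finite and take $\|x\|$ to be the Euclidean norm of the free part). The key finiteness property I need is that each ball $\{x\in \calF_1 \,|\, \|x\|\leq n\}$ is finite; this is where finite generation of $\calF_1$ is used, and it is the crux of the whole argument.

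The definition of $\varphi$-moderation asks that $x\mapsto \varphi(u(x_0-x)+v(x))$ be bounded above on $\calF_1$ for each fixed $x_0$, where $\bbZ^{\rk \calF_2}$ carries the dictionary order. Mimicking Lemma \ref{Lemma:ModerationExists}, I would define
$$
v(x) := \varphi^\mo\!\left(-\, M\big(\|x\|\big)\cdot(1,0,\dots,0)\right),
$$
where $M(n)$ is a rapidly growing integer chosen so that $M(n)$ dominates the first coordinate of $\varphi(u(y))$ for every $y$ with $\|y\|\leq n$; concretely set
$$
M(n) := 1 + \max\big\{\, |\,\text{(first coordinate of }\varphi(u(y)))\,| \;:\; \|y\|\leq n \,\big\},
$$
which is a finite maximum precisely because the ball of radius $n$ is finite. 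Fixing $x_0$, for all $x$ with $\|x\|$ large enough one has $\|x_0-x\|\leq \|x\|$ (up to an additive constant coming from $\|x_0\|$), so the first coordinate of $\varphi(u(x_0-x))$ is at most $M(\|x\|)-1$ while the first coordinate of $\varphi(v(x))$ is $-M(\|x\|)$; in the dictionary order a strictly negative leading coordinate forces boundedness above regardless of the remaining coordinates, and the finitely many small $x$ contribute only a finite amount. This shows $v$ is a $\varphi$-moderation.

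For the second assertion I would compose this construction with a projection onto the finite-index subgroup $\calF_2'$. The point is that the leading-coordinate estimate is robust under replacing $v(x)$ by a nearby element of $\calF_2'$: if $d$ denotes the index $[\calF_2:\calF_2']$, then $d\calF_2\subseteq \calF_2'$, so I can simply define $v'(x):=d\cdot v(x)$ (or round $v(x)$ to the nearest element of the sublattice $\calF_2'$, correcting by a bounded error). Scaling by $d$ only makes the negative leading coordinate of $\varphi(v'(x))$ larger in absolute value, so the same dictionary-order domination goes through verbatim and $v'$ takes values in $\calF_2'$. The main obstacle I anticipate is purely bookkeeping: keeping track of the additive constant $\|x_0\|$ uniformly in $x$ and confirming that a strictly negative leading coordinate dominates in the dictionary order; both are routine once the finiteness of balls in $\calF_1$ is in hand.
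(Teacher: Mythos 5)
Your overall strategy is the same as the paper's (equip the finitely generated group $\calF_1$ with a norm or metric whose balls are finite, dominate $u$ on balls of growing radius, negate, and then adjust values into $\calF_2'$), and two of your devices are legitimate simplifications: tracking only the first coordinate of $\varphi(u(\cdot))$ does suffice, since a vector whose first coordinate is $\leq -1$ is below $(0,\dots,0)$ in the dictionary order; and taking $v'=d\cdot v$ with $d=[\calF_2:\calF_2']$ is a slicker way to land in $\calF_2'$ than the paper's coset-representative correction. However, there is a genuine error in the choice of the ball over which you maximize. You dominate $u$ on the ball of radius $\|x\|$ \emph{centered at the origin} and then need, for fixed $x_0$, that $x_0-x$ lie in that ball, i.e.\ $\|x_0-x\|\leq\|x\|$. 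This fails for infinitely many $x$: whenever $x$ points away from $x_0$ one has $\|x_0-x\|>\|x\|$ (in $\bbZ$, take $x_0=1$ and $x=-n$, so $\|x_0-x\|=n+1>n$). The triangle-inequality slack $\|x_0\|$ that you dismiss as bookkeeping is exactly the problem: it only gives the bound $M(\|x\|+\|x_0\|)-1$ for the first coordinate of $\varphi(u(x_0-x))$, and since $u$ is an \emph{arbitrary} function, $M$ can grow fast enough that this overwhelms the term $-M(\|x\|)$ coming from $v$. Concretely, with $\calF_1=\calF_2=\bbZ$, $\varphi=\mathrm{id}$, and $u(y)=2^{2^{|y|}}$, your construction gives $v(x)=-(1+2^{2^{|x|}})$, and for $x_0=1$, $x=-n$ one gets
$$
u(x_0-x)+v(x)=2^{2^{n+1}}-2^{2^{n}}-1\longrightarrow\infty ,
$$
so your $v$ is not a moderation of this $u$.

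The repair is precisely the point where the paper's construction differs from yours: the maximum must be taken over a ball guaranteed to contain $x_0-x$ for all but finitely many $x$, \emph{uniformly in} $u$. The paper dominates $u$ on $B(-x,\,d(x,0)+1)$, the ball centered at $-x$; since $d(x_0-x,\,-x)=d(x_0,0)$, this ball contains $x_0-x$ as soon as $d(x,0)\geq d(x_0,0)$, which excludes only finitely many $x$ because balls are finite. Equivalently, you could keep origin-centered balls but take radius $2\|x\|$ instead of $\|x\|$, since $\|x_0-x\|\leq\|x_0\|+\|x\|\leq 2\|x\|$ once $\|x\|\geq\|x_0\|$. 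With either correction, the rest of your argument — negative leading coordinate dominates in the dictionary order, the finitely many exceptional $x$ contribute only finitely many values, and $v'=d\cdot v$ is $\calF_2'$-valued with an even more negative leading coordinate — goes through.
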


\begin{proof}
Note that any finitely generated abelian group can be equipped with a metric such that any ball contains only finitely many elements. Indeed, by the structure theorem of finitely generated abelian groups, such a group $G$ can be thought of as a subgroup of the product of a free abelian group of finite rank and finitely many copies of the unit circle. This induces a metric on $G$ such that any ball at any point with any radius contains only finitely many elements. 

For each positive integer $n$ and $x\in \calF_1$, let 
$$B (x,n) := \lbrace y\in \calF_1 : d(x,y)< n\rbrace.$$ For each $x\in \calF_1$, choose an element of $y_x$ of $\calF_2$ such that $\varphi(y_x)$
is greater than or equal to any element of the nonempty finite set 
$$\varphi
\left(u\left(
B(-x, d(x,0)+1)
\right)\right).$$ 
Define a map $v :\calF_1 \to \calF_2$ by 
$$v(x) = -y_x \quad \text{ for } x\in \calF_1.$$
Let $x_0$ be an element of $\calF_1$. Then for any $x\in \calF_1$ satisfying $d(x,0) \geq d(x_0,0)$, the point $x_0-x$ belongs to $B(-x, d(x,0)+1)$, and hence 
$$
\varphi(u(x_0-x)) \leq 
\varphi(y_x)
= \varphi(-v(x)),
$$
which gives $$\varphi(v(x) + u(x_0-x))\leq 0.$$ 
Since $d(x,0)<d(x_0,0)$ holds only for finitely many elements of $x\in \calF_1$, it follows that $v$ is a $\varphi$-moderation of $u$. This proves the existence of a moderation of $u$.

To prove the second part, choose a finite set $C$ of coset representative of $\calF_2'$ in $\calF_2$. 
For each $x\in \calF_1$, let $c_x$ denote the unique element of $C$ such that $v(x)-c_x$ lies in $\calF_2'$. Define $v':\calF_1 \to \calF_2$ by 
$$v'(x) = v(x)-c_x
\quad 
\text{ for } 
x\in \calF_1.$$
Since $C$ is a finite set, it follows that $v'$ is a $\calF_2'$-valued $\varphi$-moderation of $u$. 
\end{proof}

\begin{lemma}
\label{Lemma:ModerationImplication}
Let $\calF_1$ be an abelian group, $\calF_2$ be a free abelian group of positive rank and $u:\calF_1 \to \calF_2$ be a function. Let $v:\calF_1\to \calF_2$ be a variation of $u$ with respect to an isomorphism $$\varphi: \calF_2\xra{\sim} \bbZ^{\rk \calF_2}.$$
Then 
$$
\{(x, v(x))\,|\, x\in \calF_1\} + 
\left(
\bigsqcup_{g_1\in \calF_1}
g_1\cdot
\left(
\varphi^\mo \left(\bbZ^{\rk \calF_2}_{<\varphi(u(g_1))}
\right)
\right)
\right)
$$
contains $\{x_0\}\times \calF_2$ for no $x_0\in \calF_1$. 
\end{lemma}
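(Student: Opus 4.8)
The plan is to translate the asserted non-containment into a single inequality in the lexicographically ordered group $\bbZ^{\rk \calF_2}$ and then read off the conclusion from the boundedness built into the definition of a $\varphi$-moderation. Write $M_v = \{(x, v(x))\,|\, x\in \calF_1\}$ and let $S$ denote the set $\bigsqcup_{g_1\in \calF_1} g_1\cdot \varphi^\mo(\bbZ^{\rk \calF_2}_{<\varphi(u(g_1))})$ inside $\calF_1\times \calF_2$, so that the claim is precisely that $\{x_0\}\times \calF_2\not\subseteq M_v + S$ for every $x_0\in \calF_1$.

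First I would describe exactly when a point $(x_0, z)\in \{x_0\}\times \calF_2$ lies in $M_v + S$. Such a point must be written as $(x, v(x)) + (g_1, h)$ with $\varphi(h) < \varphi(u(g_1))$; comparing first coordinates forces $g_1 = x_0 - x$ and then $h = z - v(x)$. Since $\varphi$ is a group isomorphism, the constraint $\varphi(z - v(x)) < \varphi(u(x_0 - x))$ rewrites as $\varphi(z) < \varphi(v(x) + u(x_0 - x))$. Hence $(x_0, z)\in M_v + S$ if and only if there exists $x\in \calF_1$ with $\varphi(z) < \varphi(v(x) + u(x_0 - x))$.

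Next I would invoke the moderation hypothesis for the fixed $x_0$. By definition of a $\varphi$-moderation, the function $x\mapsto \varphi(v(x) + u(x_0 - x))$ is bounded above in $\bbZ^{\rk \calF_2}$; fix an upper bound $m_0$. Taking $z_0 = \varphi^\mo(m_0)$ gives $\varphi(z_0) = m_0 \geq \varphi(v(x) + u(x_0 - x))$ for every $x\in \calF_1$, so no $x$ satisfies the strict inequality of the previous step. By the membership characterization, $(x_0, z_0)\notin M_v + S$, whence $\{x_0\}\times \calF_2\not\subseteq M_v + S$. As $x_0$ was arbitrary, the lemma follows.

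I do not expect a serious obstacle: once membership in the Minkowski sum is reduced to the single inequality $\varphi(z) < \varphi(v(x) + u(x_0 - x))$, the conclusion is immediate from the definition of moderation. The only point requiring minor care is that ``bounded above'' is meant with respect to the lexicographic (total) order on $\bbZ^{\rk \calF_2}$, so that the bound $m_0$ is a genuine element of $\bbZ^{\rk \calF_2}$ and not merely a coordinatewise bound; choosing $z_0 = \varphi^\mo(m_0)$ then lands exactly at this bound and rules out every decomposition of $(x_0, z_0)$ as a sum.
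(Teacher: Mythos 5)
Your proof is correct and takes essentially the same approach as the paper's: both fix $x_0$, reduce membership of $(x_0,z)$ in the sumset to the existence of $x\in\calF_1$ with $\varphi(z) < \varphi(v(x)+u(x_0-x))$ (using that $\varphi$ is an isomorphism and that the dictionary order is translation-invariant), and then use the moderation bound $m_0$ to produce a point of $\{x_0\}\times \calF_2$, namely $\varphi^\mo(m_0)$, that the sumset misses. The paper merely phrases the same argument in set form, showing the fiber of the sumset over $x_0$ is contained in $\{x_0\}\times \varphi^\mo\left(\bbZ^{\rk \calF_2}_{<m_0}\right)$.
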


\begin{proof}
Since $v$ is a $\varphi$-moderation of $u$, it follows that there exists an element $m_0\in \bbZ^{\rk \calF_2}$ such that $$\varphi(u(x)+v(x_0 - x))\leq m_0$$ for any $x\in \calF_1$. 
Note that if $$\left(
\{
x
\}
\times 
\left(\varphi^\mo \left(\bbZ^{\rk \calF_2}_{< u(x)}\right) \right)
 \right)
+(y, v(y))$$ is contained in $\{x_0\} \times \calF_2$ for some elements $x,y\in \calF_1$, then $y$ is equal to $x_0-x$ and hence 
$$\left(
\{
x
\}
\times 
\left(\varphi^\mo \left(\bbZ^{\rk \calF_2}_{< u(x)}\right) \right)
\right)
+(y, v(y))\subseteq \{x_0\} \times \left(\varphi^\mo \left(\bbZ^{\rk \calF_2}_{< m_0}\right) \right).$$ So
$$\left( \left(
\{
x
\}
\times 
\left(\varphi^\mo \left(\bbZ^{\rk \calF_2}_{< u(x)}\right) \right)
\right)
+ 
\{(v, v(x)\,|\, x\in \calF_1\} 
\right) 
\cap 
\left(
\{x_0\}\times \calF_2\right) \subseteq \{x_0\} \times \left(\varphi^\mo \left(\bbZ^{\rk \calF_2}_{< m_0}\right) \right).$$ Consequently, 
$$\left( 
\left(
\cup _{x\in \calF_1}
\left(
\{
x
\}
\times 
\left(\varphi^\mo \left(\bbZ^{\rk \calF_2}_{< u(x)}\right) \right)
\right)
\right)
+\{(v, v(x))\,|\, x\in \calF_1\}  
\right) 
\cap 
\left(
\{x_0\}\times \calF_2
\right)$$ is contained in $$\{x_0\} \times \left(\varphi^\mo \left(\bbZ^{\rk \calF_2}_{< m_0}\right) \right).$$ This proves the lemma. 
\end{proof}

\begin{theorem}[Necessary and sufficient condition]
\label{Thm:NecessarySuffi}
Let $G_1, G_2$ be subgroups of $\calG$ such that the intersection $G_1\cap G_2$ is trivial and $G_2$ is free of positive rank. Let $\calB$ be a nonempty subset of $G_1$. Then the following statements are equivalent. 
\begin{enumerate}
\item $\calB$ admits a minimal complement in $\calG$.
\item $\calB$ admits a minimal complement in $G_1$.
\item Any $(u, \varphi)$-bounded spiked subset of $\calG$ with respect to $G_1, G_2$, having $\calB$ as its base, admits a minimal complement in $G_1\times G_2$ which is the graph of any $\varphi$-moderation of $u$ restricted to some subset of $G_1$. 
\item Some $(u, \varphi)$-bounded spiked subset of $\calG$ with respect to $G_1, G_2$, having $\calB$ as its base, admits a minimal complement in $G_1\times G_2$ which is the graph of any $\varphi$-moderation of $u$ restricted to some subset of $G_1$. 
\end{enumerate}
\end{theorem}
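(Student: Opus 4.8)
The plan is to first dispose of the equivalence of the first two statements using Theorem \ref{Thm:MinCompInSubgp}, and then to establish the cycle of implications $(2)\Rightarrow(3)\Rightarrow(4)\Rightarrow(2)$. Since $\calB$ is a nonempty subset of the subgroup $G_1$ of $\calG$, Theorem \ref{Thm:MinCompInSubgp} applied with $H=G_1$ immediately gives $(1)\Leftrightarrow(2)$, so the entire content of the theorem lies in relating the existence of a minimal complement of $\calB$ inside $G_1$ to the behaviour of the spiked subsets sitting over $\calB$.

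For $(2)\Rightarrow(3)$ I would let $M$ be a minimal complement of $\calB$ in $G_1$ and take an arbitrary $(u,\varphi)$-bounded spiked subset $X$ with base $\calB$. By Proposition \ref{Prop:ModerationExists}, $u$ admits a $\varphi$-moderation $v$, and I claim $M_v:=\{(x,v(x))\mid x\in M\}$ works. The complement property is the easy half: adding $(m,v(m))$ to $\calB G_2$ yields $(\calB+m)G_2$, so $\calB G_2+M_v=(\calB+M)G_2=G_1G_2$, and since $\calB G_2\subseteq X$ the set $M_v$ complements $X$. For minimality I would argue exactly as in Proposition \ref{Prop:LinesNoise}: if $M_v\setminus\{(x',v(x'))\}$ still complemented $X$, then minimality of $M$ furnishes an $x_0\in G_1$ with $x_0\notin\calB+(M\setminus\{x'\})$, so the fibre $\{x_0\}\times G_2$ is disjoint from the contribution of the base $\calB G_2$; Lemma \ref{Lemma:ModerationImplication} then shows that the remaining spike part of $X$ contributes to this fibre only a subset bounded above in the dictionary order transported via $\varphi$, which cannot fill the unbounded fibre, a contradiction.

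The implication $(3)\Rightarrow(4)$ only requires observing that the class of $(u,\varphi)$-bounded spiked subsets with base $\calB$ is nonempty: since $G_1\cap G_2$ is trivial the cosets $g_1G_2$ are pairwise disjoint, so $\calB G_2$ itself is such a subset with base exactly $\calB$ (for any choice of $u$ and any isomorphism $\varphi$, which exists as $G_2$ is free of positive rank). For $(4)\Rightarrow(2)$ I would take the spiked subset $X$ whose minimal complement is the graph $M_v$ of a $\varphi$-moderation $v$ restricted to some $M\subseteq G_1$, and show that $M$ is a minimal complement of $\calB$ in $G_1$. For the covering property: since $\calB G_2+M_v=(\calB+M)G_2$ while, by Lemma \ref{Lemma:ModerationImplication}, the spike part of $X$ summed with $M_v$ meets every fibre $\{g_1\}\times G_2$ in a set bounded above, any fibre over $g_1\notin\calB+M$ would be left uncovered, forcing $\calB+M=G_1$. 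For minimality: if $M\setminus\{m'\}$ complemented $\calB$, then $\calB G_2+(M_v\setminus\{(m',v(m'))\})=G_1G_2$ would make $M_v\setminus\{(m',v(m'))\}$ a complement of $X$, contradicting minimality of $M_v$.

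The main obstacle throughout is the minimality half of $(2)\Rightarrow(3)$ together with the covering half of $(4)\Rightarrow(2)$: both hinge on the fact that the spikes, once translated by a moderation, can only ever fill a bounded-above slab of each fibre and are therefore useless for covering a full vertical coset. This is precisely the content encapsulated by the moderation condition and Lemma \ref{Lemma:ModerationImplication}, so the real work is to transcribe the $\bbZ^{k+1}$ argument of Proposition \ref{Prop:LinesNoise} into the coordinate-free setting of $G_1\times G_2$ with the order transported along $\varphi$, taking care that each $\{x_0\}\times G_2$ is genuinely unbounded above so that a bounded-above subset cannot exhaust it.
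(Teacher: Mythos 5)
Your proposal is correct and follows essentially the same route as the paper: $(1)\Leftrightarrow(2)$ via Theorem \ref{Thm:MinCompInSubgp}, then $(2)\Rightarrow(3)$ using Proposition \ref{Prop:ModerationExists} to produce a moderation and Lemma \ref{Lemma:ModerationImplication} to show the spikes cannot fill an uncovered fibre $\{x_0\}\times G_2$, and $(4)\Rightarrow(2)$ by the same fibre argument for covering plus the direct contradiction for minimality. Your only (welcome) addition is making explicit that the class of $(u,\varphi)$-bounded spiked subsets with base $\calB$ is nonempty (e.g.\ $\calB G_2$ itself), a point the paper leaves implicit in asserting $(3)\Rightarrow(4)$.
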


\begin{proof}
The first two statements are equivalent by Theorem \ref{Thm:MinCompInSubgp}.

Suppose $\calB$ admits a minimal complement $M$ in $G_1$. Let $X$ be a $(u, \varphi)$-bounded spiked subset of $\calG$ with respect to $G_1, G_2$, having $\calB$ as its base. By Proposition \ref{Prop:ModerationExists}, there exists a $\varphi$-moderation $v:G_1\to G_2$ of $u$. Let $M_v$ denote the graph of the restriction of $\varphi$ to $M$. For any $x\in G_1$,
$$(\calB\times G_2) + (x, v(x)) = (\calB+x)\times G_2.$$ Since $\calB+M = G_1$, it follows that $$(\calB \times G_2) + M_v = G_1\times G_2.$$ So $M_v$ is a complement of $\calB \times G_2$ in $G_1 \times G_2$. Since $X$ contains $\calB\times G_2$, the set $M_v$ is also a complement of $X$ in $G_1\times G_2$. 

Suppose $M_v$ is not a minimal complement of $X$ in $G_1\times G_2$. Then for some $x'\in M$, the set $M_v\setminus \{(x', v(x'))\}$ is also a complement to $X$. Since $M$ is a minimal complement to $\calB$ in $G_1$, it follows that $\calB+ (M\setminus \{x'\})$ does not contain $x_0$ for some element $x_0\in G_1$. So $$(\calB\times G_2) + (M_v\setminus \{(x', v(x'))\})$$ contains no element of $\{x_0\}\times G_2$. 
Since $(M_v\setminus \{(x', v(x'))\}$ is complement to $X$, it follows that $$\{x_0\}\times G_2\subseteq (\calB\times G_2) + (M_v\setminus \{(x', v(x'))\}),$$ which is absurd by Lemma \ref{Lemma:ModerationImplication}. This contradicts the assumption that $M_v$ is not a minimal complement of $X$ in $G_1\times G_2$. Hence the third statement follows. 

Note that the third statement implies the fourth statement. 

Assume that fourth statement holds. Then some $(u, \varphi)$-bounded spiked subset $X$ of $\calG$ with respect to $G_1, G_2$, having $\calB$ as its base, admits a minimal complement in $G_1\times G_2$ which is the graph of any $\varphi$-moderation $v$ of $u$ restricted to some subset $M'$ of $G_1$. By Lemma \ref{Lemma:ModerationImplication}, it follows that 
$$\calB\times G_2 + \{(x, v(x))\,|\, x\in M'\} = G_1\times G_2.$$ Hence 
$$\calB + M'=G_1.$$ So $M'$ is a complement to $\calB$ in $G_1$. If $M'$ were not a minimal complement to $\calB$ in $G_1$, then $\{(x, v(x))\,|\, x\in M'\}$ would not be a minimal complement to $X$ in $G_1\times G_2$, which would be absurd. So $M'$ is a minimal complement to $\calB$ in $G_1$. This proves the second statement, which implies the first statement by Theorem \ref{Thm:MinCompInSubgp}.
\end{proof}

\begin{remark}
Note that the image of the minimal complement $M_v$ of $X$ (as in the statement of Theorem \ref{Thm:LinesNoise}) under the projection map $$\bbZ^{k+1} = \bbZ^k \times \bbZ \to \bbZ^k$$ is a minimal complement of $\calB$ (since it is equal to $M$). However, it is not true that the image of any minimal complement of $X$ under the projection map $\bbZ^{k+1} = \bbZ^k \times \bbZ \to \bbZ^k$ is a minimal complement of $\calB$. 
For instance, consider the minimal complement $$\{(\pm 2n, 2n)\,|\, n\in \bbZ_{\geq 1}\}$$ of the bounded spiked subset $$\left((2\bbZ+1)\times \bbZ_{\leq 0} \right) \cup \left(\{0\}\times \bbZ\right)$$ of $\bbZ^2$. 
Its image under the projection map onto the first coordinate is equal to $2\bbZ$, which is not a complement of $\{0\}$. 
This example also indicates that it is not true in general that a $u$-bounded spiked set has the graph of a moderation of $u$ restricted to the minimal complement of its base as its only minimal complement. 
\end{remark}

\begin{corollary}
\label{Cor:LowerWhateverYAxisRota}
Let $f: \bbZ \to \bbR$ be a function and let $W_+$ (resp.$W_-$) denote the subset of $\bbZ^2$ lying `above' (resp. `below') the graph of $X(Y-f(X))=0$ (i.e. the union of the $y$-axis and the graph of $Y=f(X)$) rotated clockwise by an angle $\theta$, i.e., 
\begin{align*}
	W_+ & = 
	\left\{
	(x, y)\in \bbZ^2 \,| \, x\sin \theta + y \cos \theta > f(x\cos \theta - y \sin\theta)
	\right\}
	\cup 
	\{
	(x,y)\in \bbZ^2 \,|\, y = x \tan \theta
	\}	,\\
	W_- & = 
	\left\{
	(x, y)\in \bbZ^2 \,| \,x\sin \theta + y \cos \theta < f(x\cos \theta - y \sin\theta)	\right\}
	\cup 
	\{
	(x,y)\in \bbZ^2 \,|\, y = x \tan \theta
	\}	.
\end{align*}
If $\tan \theta$ is rational\footnote{$\tan \frac \pi 2$ is to be interpreted as ``$\frac 10$' and hence as a rational.}, then each of the sets $W_\pm$ admits a minimal complement in $\bbZ^2$  \textnormal{[see Fig.\ref{Fig8}]}.
\end{corollary}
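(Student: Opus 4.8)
The plan is to realize each of $W_+, W_-$ as a $(u,\varphi)$-bounded spiked subset of $\bbZ^2$ with respect to a suitable pair of subgroups, and then to invoke Theorem \ref{Thm:NecessarySuffi}. The change of coordinates is exactly the one used in the proof of Corollary \ref{Cor:LowerWhateverRota}: writing $\tan\theta = a/b$ with $a,b$ coprime integers (and interpreting $\tan\frac\pi2$ as $\frac10$), I would choose integers $c,d$ with $ad-bc=-1$ and set $G_1 = \{(x,y)\in\bbZ^2 \,|\, cx=dy\}$ and $G_2 = \{(x,y)\in\bbZ^2 \,|\, ax=by\}$, so that $\bbZ^2 = G_1 G_2$ is identified with $G_1\times G_2$, the intersection $G_1\cap G_2$ is trivial, and $G_2$ is free of rank one. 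The point is that $G_2$ is precisely the rotated $y$-axis $\{y = x\tan\theta\}$, which is the unique line through the origin contained in both $W_+$ and $W_-$.

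Next I would transport the defining inequalities into these coordinates. Using the isomorphism $\varphi:G_2\xra{\sim}\bbZ$ and the floor/ceiling bookkeeping already carried out in Corollary \ref{Cor:LowerWhateverRota} to pass from the real-valued $f$ to an integer-valued truncation function $u:G_1\to G_2$, the set $W_+$ becomes the union of the full coset $G_2$ over the base together with, over each remaining coset $g_1\neq 0$, a half-line of the form $\varphi^\mo(\bbZ_{<\varphi(u(g_1))})$ for an appropriate orientation of $\varphi$. Concretely, $W_+$ is a $(u,\varphi)$-bounded spiked subset whose base $\calB$ is the single point $\{0\}\subseteq G_1$; the case $W_-$ is handled identically after replacing $\varphi$ by $-\varphi$, which reverses the direction of truncation, so that $W_-$ is likewise a $(u',\varphi')$-bounded spiked subset with base $\{0\}$.

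Finally I would apply Theorem \ref{Thm:NecessarySuffi} with $\calB = \{0\}$. A singleton is finite, hence admits a minimal complement in $G_1\cong\bbZ$ by \cite[Theorem 2.1]{MinComp1} (indeed $G_1$ itself is one), so statement (2) of that theorem holds. By the implication (2)$\Rightarrow$(3), the graph over a minimal complement $M$ of $\{0\}$ in $G_1$ of any $\varphi$-moderation of $u$ is a minimal complement of $W_+$, and the analogous statement with $\varphi'$ gives a minimal complement of $W_-$, in $G_1\times G_2 = \bbZ^2$.

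The main obstacle I anticipate is purely bookkeeping: verifying that under the chosen coordinates the base is genuinely the single coset $\{0\}$, i.e. that no coset $g_1 G_2$ with $g_1\neq 0$ lies fully inside $W_\pm$ (which holds because over such a coset the set is a proper half-line), and that the floor/ceiling adjustments correctly express the strict inequalities defining $W_\pm$ as the truncations $\varphi^\mo(\bbZ_{<\varphi(u(g_1))})$ with the right orientation for each of $W_+$ and $W_-$. Once the identification with a $(u,\varphi)$-bounded spiked subset is in place, the existence of the minimal complement is immediate from Theorem \ref{Thm:NecessarySuffi}.
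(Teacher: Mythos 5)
Your proposal is correct and takes essentially the same route as the paper's own proof: the same subgroups $G_1=\{(x,y)\,|\,cx=dy\}$ and $G_2=\{(x,y)\,|\,ax=by\}$ with $ad-bc=-1$, the same truncation functions transported from Corollary \ref{Cor:LowerWhateverRota}, and the same final appeal to Theorem \ref{Thm:NecessarySuffi} with base $\calB=\{0\}$. Your explicit remarks about flipping the isomorphism (replacing $\varphi$ by $-\varphi$) to handle $W_-$ and about verifying that no nonzero coset $g_1G_2$ lies entirely in $W_\pm$ are details the paper leaves implicit, and they are handled correctly.
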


\begin{proof}
Suppose $a,b$ are integers with no common factor and $\tan \theta = \frac ab$. Choose integers $c,d$ such that $ad-bc = -1$. Let $G_1, G_2$ denote the subgroups $\{
	(x, y)\in \bbZ^2 \,| \,  c x = dy
	\}, \{
	(x, y)\in \bbZ^2 \,| \,  a x = by
	\}$ of $\bbZ^2$ respectively. 
Note that $\bbZ^2$ is equal to $G_1G_2$, which we identify with $G_1\times G_2$ via the product map. 
Let $\varphi, u,u'$ denote the maps as defined in the proof of Corollary \ref{Cor:LowerWhateverRota}.
Then it follows that 
\begin{align*}
W_+ & = (\{0\} \times G_2)\bigcup \left( \cup_{g\in G_1} \{g\}\times \varphi^\mo \left(\bbZ_{<\varphi(u(g))}\right) \right),\\
W_- & =  (\{0\} \times G_2)\bigcup \left( \cup_{g\in G_1} \{g\}\times \varphi^\mo \left(\bbZ_{>\varphi(u'(g))}\right) \right).
\end{align*}
By Theorem \ref{Thm:NecessarySuffi}, the corollary follows.
\end{proof}

\begin{corollary}
\label{Prop:LinesNoisek1k2}
Let $k_1, k_2$ be positive integers and $u:\bbZ^{k_1}\to \bbZ^{k_2}$ be a function. It admits a moderation $v: \bbZ^{k_1}\to \bbZ^{k_2}$. Let $\calB$ be a subset of $\bbZ^{k_1}$ admitting a minimal complement $M$ in $\bbZ^{k_1}$. Then the subset 
$$M_v:=\{(x, v(x))\,|\, x\in M\}$$
of $\bbZ^{k_1+k_2}$ is a minimal complement of any subset $X$ of $\bbZ^{k_1+k_2}$ satisfying 
$$\calB\times \bbZ^{k_2}
\subseteq
X
\subseteq 
(\calB\times \bbZ^{k_2} )
\bigsqcup 
\left(
\bigsqcup 
_{x\in \bbZ^{k_1}\setminus \calB}
\left(
\{
x
\}
\times 
\bbZ^{k_2}_{<u(x)}
\right)
\right).
$$
Moreover, for each finite index subgroup $G$ of $\bbZ^{k_2}$, the function $u$ has a moderation $v_G:\bbZ^{k_1} \to \bbZ^{k_2}$ which takes values in $G$, and any subset $Y$ of $\bbZ^{k_1+k_2}$ satisfying 
$$\calB\times G
\subseteq
Y
\subseteq 
(\calB\times G )
\bigsqcup 
\left(
\bigsqcup 
_{x\in \bbZ^{k_1}\setminus \calB}
\left(
\{
x
\}
\times 
\bbZ^{k_2}_{<u(x)}
\right)
\right)
$$
has the subset $M_{v_G}C$ of $\bbZ^{k_1+k_2}$ as its minimal complement in $\bbZ^{k_1+k_2}$ 
where 
$$M_{v_G}:=\{(x, v_G(x))\,|\, x\in M\},$$
and $C$ is a set of coset representatives of $G$ in $\bbZ^{k_2}$.
\end{corollary}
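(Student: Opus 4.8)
The plan is to handle the two assertions of the corollary separately: the first is essentially a restatement of the necessary-and-sufficient criterion already proved, while the second needs a direct verification adapted to the finite-index subgroup $G$. For the first assertion I would specialize Theorem~\ref{Thm:NecessarySuffi} to $\calG=\bbZ^{k_1+k_2}$, $G_1=\bbZ^{k_1}$, $G_2=\bbZ^{k_2}$ and $\varphi=\mathrm{id}$. With these identifications $\varphi^\mo(\bbZ^{k_2}_{<\varphi(u(x))})=\bbZ^{k_2}_{<u(x)}$ in the dictionary order, so a subset $X$ obeying the displayed containment is exactly a $(u,\mathrm{id})$-bounded spiked subset with base $\calB$. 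Existence of a moderation $v$ is Proposition~\ref{Prop:ModerationExists}. Since $\calB$ admits the minimal complement $M$ in $G_1=\bbZ^{k_1}$, which is statement~(2) of that theorem, statement~(3) holds; inspecting the proof of the implication from (2) to (3) shows that the minimal complement it produces is precisely the graph $M_v$ of $v$ over the given $M$.

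For the second assertion, the $G$-valued moderation $v_G$ is furnished by the ``moreover'' clause of Proposition~\ref{Prop:ModerationExists} (take the finite-index subgroup there to be $G$). Write $Z:=M_{v_G}C=\{(x,v_G(x)+c)\mid x\in M,\ c\in C\}$. I would first check that $Z$ is a complement of $Y$. Because $Y\supseteq\calB\times G$ and $v_G$ is $G$-valued, the sum $(\calB\times G)+Z$ has first coordinates filling $\bbZ^{k_1}$ (as $\calB+M=\bbZ^{k_1}$) and, for each fixed $x\in M$, second coordinates filling $(G+v_G(x))+C=G+C=\bbZ^{k_2}$; a target $(p,q)$ is reached by first solving $p=b+x$ with $b\in\calB$, $x\in M$, and then $q-v_G(x)=g+c$ with $g\in G$, $c\in C$.

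The crux is minimality. Fix $z_0=(x_0,v_G(x_0)+c_0)\in Z$; since $(x,c)\mapsto(x,v_G(x)+c)$ is injective, deleting $z_0$ removes only the index $(x_0,c_0)$. Minimality of $M$ gives $p_0\in\bbZ^{k_1}$ with $p_0\notin\calB+(M\setminus\{x_0\})$, so every representation $p_0=b+x$ with $b\in\calB$, $x\in M$ forces $x=x_0$. I then look for $q_0$ with $(p_0,q_0)\notin Y+(Z\setminus\{z_0\})$. A surviving translate $(x',v_G(x')+c')$ covers $(p_0,q_0)$ only if $(p_0-x',\,q_0-v_G(x')-c')\in Y$, and I split on whether this point lies in the base part $\calB\times G$ or in the truncated part. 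In the base case $p_0-x'\in\calB$ forces $x'=x_0$, whereupon $q_0-v_G(x_0)-c'\in G$ together with $v_G(x_0)\in G$ gives $q_0\equiv c'\pmod G$; imposing $q_0\equiv c_0\pmod G$ then forces $c'=c_0$, the excluded index. In the truncated case, with $x:=p_0-x'$ the defining inequality rearranges to $q_0-c'<u(x)+v_G(p_0-x)$, and the moderation bound (an $m_0$ with $u(x)+v_G(p_0-x)\le m_0$ for all $x$) yields $q_0-c'<m_0$. As $C$ is finite and $c_0+G$ is cofinal in the dictionary order---its first-coordinate projection is a nontrivial, hence finite-index, subgroup of $\bbZ$---I may choose $q_0\in c_0+G$ with $q_0\ge m_0+c'$ for every $c'\in C$, defeating the truncated case while the congruence kills the base case. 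Thus $(p_0,q_0)$ is uncovered and $Z$ is minimal; this truncated-case estimate parallels Lemma~\ref{Lemma:ModerationImplication}.

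The main obstacle is precisely this final balancing act: the single element $q_0$ must satisfy a congruence modulo $G$ (to neutralize the base-part contributions of all surviving translates simultaneously) and yet be pushed arbitrarily high in the dictionary order (to neutralize, via the moderation bound and the finiteness of $C$, the truncated contributions). That one $q_0$ can meet both demands is exactly where the finite-index hypothesis on $G$ is genuinely used, through the cofinality of the coset $c_0+G$.
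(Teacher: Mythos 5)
Your proposal is correct, and for the first assertion it coincides with the paper's proof: both specialize Theorem \ref{Thm:NecessarySuffi} to $G_1=\bbZ^{k_1}$, $G_2=\bbZ^{k_2}$, $\varphi=\mathrm{id}$, with Proposition \ref{Prop:ModerationExists} supplying the moderation. For the second assertion your route genuinely differs. The paper argues in two steps: it invokes Theorem \ref{Thm:NecessarySuffi} to conclude that $M_{v_G}$ is a minimal complement of $Y$ ``in $\bbZ^{k_1}\times G$'', and then cites the proof of Theorem \ref{Thm:MinCompInSubgp} to lift this to $\bbZ^{k_1+k_2}$ by translating along the coset representatives $C$. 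You instead verify directly in $\bbZ^{k_1+k_2}$ that $M_{v_G}C$ is a minimal complement of $Y$. Your one-shot verification is more than a stylistic choice: the paper's two citations do not apply verbatim, since $Y$ is not contained in $\bbZ^{k_1}\times G$ (its truncated columns $\{x\}\times\bbZ^{k_2}_{<u(x)}$ meet every coset of $G$), whereas a spiked subset must lie in $G_1G_2$ and Theorem \ref{Thm:MinCompInSubgp} requires $W\subseteq H$; in particular the sumsets $Y+M_{v_G}c$, $c\in C$, are not pairwise disjoint, which is exactly the disjointness that the proof of Theorem \ref{Thm:MinCompInSubgp} relies on. The content needed to repair this is precisely your final balancing act: the uncovered point $(p_0,q_0)$ must have $q_0$ in the coset $c_0+G$ (so that the congruence kills coverage coming from the base parts of surviving translates) and simultaneously dominate $m_0+c'$ for every $c'\in C$ in the dictionary order (so that the moderation bound, as in Lemma \ref{Lemma:ModerationImplication}, kills coverage coming from the truncated parts), and both demands can be met at once because $G$ has finite index, making $c_0+G$ cofinal. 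So your argument buys a self-contained, fully rigorous proof where the paper offers a terse reduction whose implicit adaptation is exactly what you spell out.
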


\begin{proof}
The existence of a moderation $v$ of $u$ follows from Proposition \ref{Prop:ModerationExists}. The next statement follows from Theorem \ref{Thm:NecessarySuffi}. The existence of a $G$-valued moderation of $u$ for  each finite index subgroup $G$ of $G_2$ follows from Proposition \ref{Prop:ModerationExists}. 
From Theorem \ref{Thm:NecessarySuffi}, it follows that $M_{v_G}$ is a minimal complement of $Y$ in $\bbZ^{k_1}\times G$. Then from the proof of Theorem \ref{Thm:MinCompInSubgp}, we conclude that $M_{v_G}C$ is a minimal complement of $Y$ in $\bbZ^{k_1+k_2}$. 
\end{proof}

\begin{figure}[h]
	\centering
	\begin{subfigure}{.5\textwidth}
		\centering
		\includegraphics[scale=0.08]{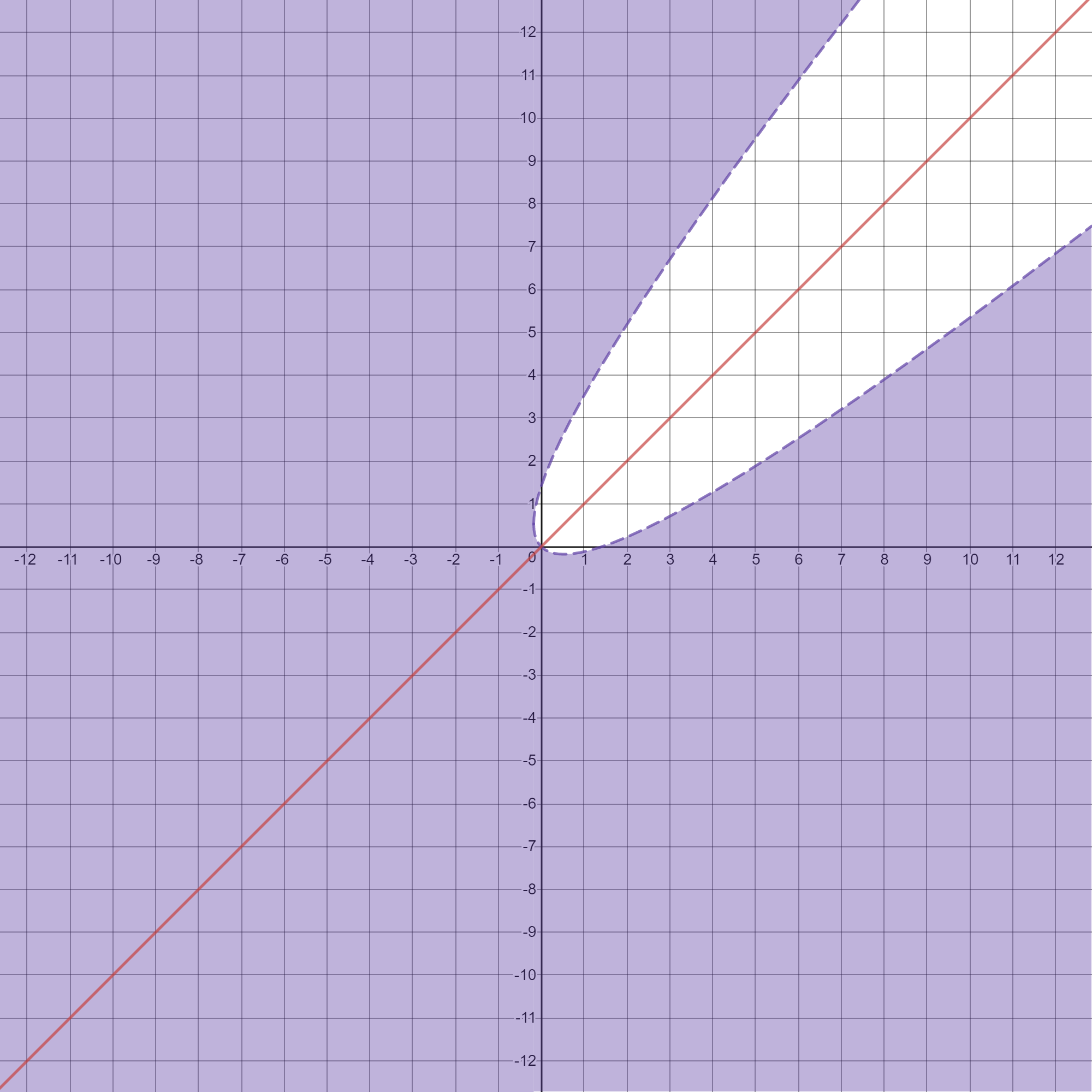}
		\caption{$W$}
		\label{}
	\end{subfigure}%
	\begin{subfigure}{.5\textwidth}
		\centering
		\includegraphics[scale=0.08]{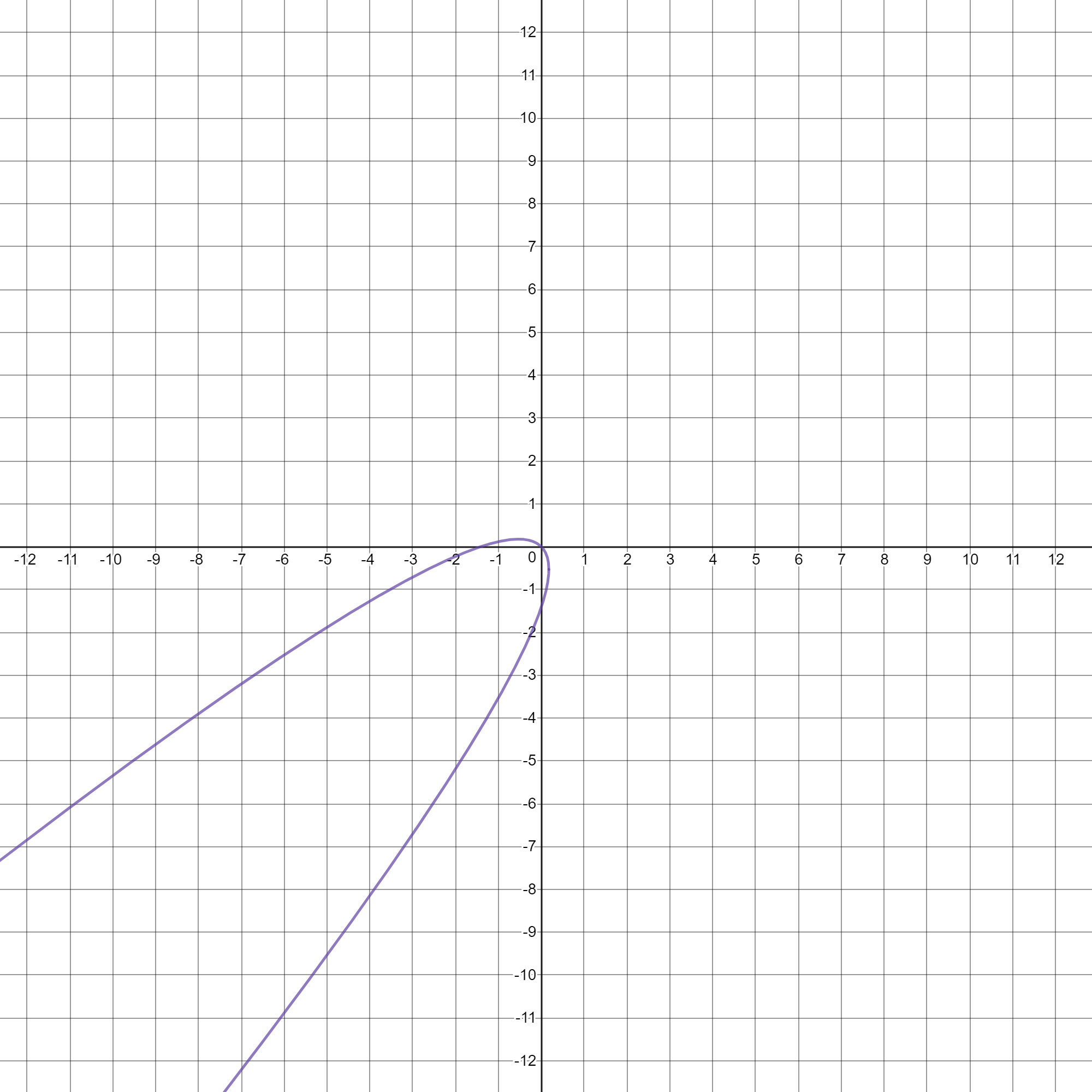}
		\caption{$M$}
		\label{}
	\end{subfigure}
	\caption{Cor. \ref{Cor:LowerWhateverYAxisRota}. A (slight) perturbation of $M$ is a minimal complement to $W = \lbrace (x,y)\in \mathbb{Z}^{2}| (x^{2} + y^{2} - 2xy -\sqrt{2}x-\sqrt{2}y) >0\rbrace \cup \lbrace (x,y)\in \mathbb{Z}^{2}| y=x\rbrace $ in $\mathbb{Z}^{2}$. Note that originally, $M=\lbrace (x,y)\in \mathbb{R}^{2}|(x^{2}+y^{2}-2xy + \sqrt{2}x + \sqrt{2}y)=0\rbrace$ only passes through $(0,0)$ in $\mathbb{Z}^{2}$.}
	\label{Fig8}
\end{figure}
\begin{example}\label{Cor5.9}
The set 
$$M=
\{
(s,t, -2(s^2+t^2), -(s^4 + t^4))\in \bbZ^4 \,| \, (s,t)\in \bbZ^2
\}$$
is a minimal complement of 
$$W = 
\{
(x, y,z, w)\in \bbZ^4 \,| \, z - x^2 - y^2<0, w - x^3 - y^3 <0
\}
\cup (\{(0,0)\}\times \bbZ^2)$$
in $\bbZ^4$.
\end{example}

\begin{remark}
If $u: \bbZ^{k_1} \to \bbZ^{k_2}$ is defined by $k_2$-polynomials over $\bbZ$ in $k_1$-variables, then from Remark \ref{Rk:PolyModeration}, it follows that $v$ can also be taken to be a function $\bbZ^{k_1}\to \bbZ^{k_2}$ defined by $k_2$-polynomials over $\bbZ$ in $k_1$ variables. 
\end{remark}

In fact, a larger class of sets admit minimal complements:

\begin{theorem}
	\label{Thm5.11}
	Let $G_1, G_2$ be subgroups of an abelian group $\calG$ such that $G_2$ is free of positive rank, $G_1\cap G_2$ is trivial. Let $\calB$ be a subset of $G_1$ with minimal complement $M$ in $G_1$. Let $u: G_1\to G_2$ be a function and 
	$$\varphi: G_2 \xra{\sim} \bbZ^{\rk G_2}$$ 
	be an isomorphism. Let $G_2'$ be a subgroup of $G_2$ of finite index and $g_2$ be an element of $G_2$. Let $X$ be a subset of $\calG$ containing $\calB G_2$ and contained in 
	$$
	\calB G_2
	\bigsqcup 
	\left(
	\bigsqcup_{g_1\in G_1\setminus \calB}
	g_1\cdot
	\left(
	\varphi^\mo \left(\bbZ^{\rk G_2}_{<\varphi(u(g_1))}
	\right)
	\right)
	\right)
	\bigsqcup 
	\left(
	\bigsqcup_{g_1\in G_1\setminus \calB}
	g_1\cdot
	\left(
	\left(
	\varphi^\mo \left(\bbZ^{\rk G_2}_{\not<\varphi(u(g_1))}
	\right)
	\right)
	\cap 
	\left(
	G_2\setminus g_2 G_2'
	\right)
	\right)
	\right).
	$$
	Then $u$ admits a $G_2'$-valued $\varphi$-moderation and the graph $M_v$ of the restriction $v'|_M$ of such a moderation $v'$ to $M$, i.e.,
	$$
	M_{v'}
	=
	\{(x,v'(x))\,|\, x\in M\}
	$$
	is a minimal complement of $X$ in $G_1\times G_2$. 
\end{theorem}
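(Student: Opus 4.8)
The plan is to follow the template of Theorem~\ref{Thm:NecessarySuffi}, adding one extra ingredient to absorb the third ``above-threshold'' piece of $X$. First I would invoke Proposition~\ref{Prop:ModerationExists} to produce a $G_2'$-valued $\varphi$-moderation $v'$ of $u$; this already settles the existence claim. Writing $M_{v'}=\{(x,v'(x))\,|\,x\in M\}$, the complement property is immediate and identical to the earlier arguments: for any $x\in G_1$ one has $(\calB\times G_2)+(x,v'(x))=(\calB+x)\times G_2$, so $\calB+M=G_1$ gives $(\calB\times G_2)+M_{v'}=G_1\times G_2$, and since $\calB G_2\subseteq X$ the set $M_{v'}$ is a complement of $X$.

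For minimality I would argue by contradiction: if $M_{v'}\setminus\{(x',v'(x'))\}$ were still a complement for some $x'\in M$, then minimality of $M$ in $G_1$ yields an $x_0\in G_1$ with $x_0\notin \calB+(M\setminus\{x'\})$, and the strategy is to exhibit a single point of $\{x_0\}\times G_2$ that is not covered by $X+(M_{v'}\setminus\{(x',v'(x'))\})$. To this end I would split $X$ into its three defining pieces and examine each one's contribution to the fibre $\{x_0\}\times G_2$. The $\calB G_2$ piece contributes nothing to this fibre, precisely because $x_0\notin \calB+(M\setminus\{x'\})$. The ``below $u$'' cone contributes, by the defining property of the $\varphi$-moderation $v'$ together with the computation in the proof of Lemma~\ref{Lemma:ModerationImplication}, only points of $\{x_0\}\times\varphi^\mo(\bbZ^{\rk G_2}_{<m_0})$ for some fixed $m_0$.

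The new point, and the place where the hypothesis $v'(G_1)\subseteq G_2'$ is used, is the third piece. A point of this piece over $g_1\in G_1\setminus\calB$ has the form $(g_1,h)$ with $h\notin g_2 G_2'$; adding $(x_0-g_1,v'(x_0-g_1))\in M_{v'}$ lands it at $(x_0,\,h+v'(x_0-g_1))$, and since $v'(x_0-g_1)\in G_2'$ while $h\notin g_2 G_2'$, the second coordinate again avoids $g_2 G_2'$. Hence the entire contribution of the third piece to the fibre lies in $\{x_0\}\times(G_2\setminus g_2 G_2')$. Combining the three cases, every covered point of $\{x_0\}\times g_2 G_2'$ must already come from the ``below $u$'' cone, and so has $\varphi$-value strictly below $m_0$.

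Finally I would locate the uncovered point. Because $G_2'$ has finite index in the free group $G_2$, the image $\varphi(g_2 G_2')=\varphi(g_2)+\varphi(G_2')$ is a translate of a finite-index subgroup of $\bbZ^{\rk G_2}$ and is therefore unbounded above in the dictionary order; in particular there is an element $g^\ast\in g_2 G_2'$ with $\varphi(g^\ast)\not<m_0$. Then $(x_0,g^\ast)$ is not covered by $X+(M_{v'}\setminus\{(x',v'(x'))\})$, contradicting the assumption that the latter is a complement, so $M_{v'}$ is minimal. The main obstacle is the bookkeeping in this minimality step: one must check that the coset $g_2 G_2'$ is simultaneously invariant under the $G_2'$-valued translations coming from $M_{v'}$ (so the noisy third piece cannot reach it) and unbounded above in the dictionary order (so the moderated cone cannot reach it either); choosing a $G_2'$-valued moderation is exactly what makes these two requirements compatible.
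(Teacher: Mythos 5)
Your proposal is correct and follows essentially the same route as the paper's proof: obtain a $G_2'$-valued $\varphi$-moderation from Proposition \ref{Prop:ModerationExists}, verify the complement property via $\calB G_2$, and for minimality pick $x_0\notin \calB+(M\setminus\{x'\})$, bound the cone's contribution to the fibre by Lemma \ref{Lemma:ModerationImplication}, use $G_2'$-valuedness to keep the third piece out of the coset $g_2G_2'$, and pick an element of $g_2G_2'$ with $\varphi$-value not below $m_0$ as the uncovered point. Your explicit justification that $g_2G_2'$ is unbounded above in the dictionary order (via finite index) is a small addition the paper leaves implicit, but the argument is otherwise identical.
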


\begin{proof}
	Since $G_2'$ is a finite index subgroup of $G_2$, the existence of a $G_2'$-valued $\varphi$-moderation $v'$ of $u$ follows from Proposition \ref{Prop:ModerationExists}.
	
	Note that $M_{v'}$ is a complement of $\calB G_2$ in $G_1\times G_2$. So it is a complement of $X$ in $G_1\times G_2$. Suppose it is not a minimal complement of $X$ in $G_1\times G_2$. Then for some $g_1\in M$, the set $M_{v'}\setminus \{ (g_1, v'(g_1))\}$ is also a complement of $X$ in $G_1\times G_2$. Since $M$ is a minimal complement of $\calB$ in $G_1$, there exists an element $g_0\in G_1$ which does not belong to $\calB + (M\setminus \{g_1\})$. Since $v'$ is a $\varphi$-moderation of $u$, there exists an element $m_0\in \bbZ^{\rk G_2}$ such that the function $x\mapsto \varphi(v'(x) + u(x_0-x))$ defined on $G_1$ is bounded by $m_0$. Let $g'$ be an element of $g_2G_2'$ larger than $m_0$. From the proof of Lemma \ref{Lemma:ModerationImplication}, it follows that 
	$$
	\left(
	\{(x, v(x))\,|\, x\in G_1\} + 
	\left(
	\bigsqcup_{g_1\in G_1}
	g_1\cdot
	\left(
	\varphi^\mo \left(\bbZ^{\rk G_2}_{<\varphi(u(g_1))}
	\right)
	\right)
	\right) 
	\right)
	\cap \{g_0\} \times G_2
	$$
	is contained in $\{g_0\}\times \varphi^\mo \left(\bbZ^{\rk G_2}_{<\varphi(u(g_1))}
	\right)$ and hence does not contain $g_0 g'$.
	Also note that  
	$$
	(G_1\times G_2')+ 
	\left(
	\bigsqcup_{g_1\in G_1\setminus \calB}
	g_1\cdot
	\left(
	\left(
	\varphi^\mo \left(\bbZ^{\rk G_2}_{\not<\varphi(u(g_1))}
	\right)
	\right)
	\cap 
	\left(
	G_2\setminus g_2 G_2'
	\right)
	\right)
	\right)
	$$ 
	is contained in 
	$G_1\times (G_2\setminus g_2G_2')$, and hence does not contain $g_0g'$. Moreover, $g_0g'$ does not belong to $(M_{v'}\setminus \{(g_1, v'(g_1))\}) + \calB G_2$. Consequently, $g_0g'$ does not belong to $(M_{v'}\setminus \{(g_1, v'(g_1))\}) + X$, which is absurd. Hence $M_{v'}$ is a minimal complement of $X$ in $G_1\times G_2$. 
\end{proof}

%%\vspace*{3mm}
\section{Concluding remarks}
We construct several new examples of sets in higher rank abelian groups having minimal complements and in each case, specify one of them.

\begin{example}
	The set 
	$$M=
	\{
	(t, -3t^2)\in \bbZ^2 \,| \, t\in \bbZ
	\}$$
	is a minimal complement of 
	$$\{
	(x, y)\in \bbZ^2 \,| \, y - x^2 <0
	\}
	\cup (\{0\}\times \bbZ)
	\cup 
	\{
	(m, 3^{|m|}n )\,|\, (m,n)\in \bbZ^2
	\}$$
	in $\bbZ^2$.
\end{example}

\begin{example}
	Let $\{p_k\}_{k\geq 1}$ be a sequence of odd primes. Then the set 
	$$M=
	\{
	(t, -2t^2)\in \bbZ^2 \,| \, t\in \bbZ
	\}$$
	is a minimal complement of the set 
	$$\{
	(x, y)\in \bbZ^2 \,| \, y - x^2 <0
	\}
	\cup (\{0\}\times \bbZ)
	\cup 
	\{
	(m, p_{|m|} n )\,|\, (m,n)\in (\bbZ\setminus \{0\})\times \bbZ
	\}
	$$
	in $\bbZ^2$.
\end{example}

\begin{example}
	The set 
	$$M=
	\{
	(t, -3t^2, -4t^4)\in \bbZ^3 \,| \, t\in \bbZ
	\}$$
	is a minimal complement of 
	$$\{
	(x, y,z)\in \bbZ^3 \,| \, y - x^2 <0, z - x^3 <0
	\}
	\cup (\{0\}\times \bbZ^2)
	\cup 
	\{
	(i, 3^{|i|}j ,4^{|i|}k )\,|\, (i,j,k)\in \bbZ^2
	\}$$
	in $\bbZ^2$.
\end{example}

%%\vspace*{5mm}
\section{Acknowledgements}
The first author would like to acknowledge the fellowship of the Erwin Schr\"odinger International Institute for Mathematics and Physics (ESI) and would also like to thank the Fakult\"at f\"ur Mathematik, Universit\"at Wien where a part of the work was carried out.
The second author would like to acknowledge the Initiation Grant from the Indian Institute of Science Education and Research Bhopal, and the INSPIRE Faculty Award from the Department of Science and Technology, Government of India. The figures were made using Desmos.
%\vspace{5mm}
\def\cprime{$'$} \def\Dbar{\leavevmode\lower.6ex\hbox to 0pt{\hskip-.23ex
  \accent"16\hss}D} \def\cfac#1{\ifmmode\setbox7\hbox{$\accent"5E#1$}\else
  \setbox7\hbox{\accent"5E#1}\penalty 10000\relax\fi\raise 1\ht7
  \hbox{\lower1.15ex\hbox to 1\wd7{\hss\accent"13\hss}}\penalty 10000
  \hskip-1\wd7\penalty 10000\box7}
  \def\cftil#1{\ifmmode\setbox7\hbox{$\accent"5E#1$}\else
  \setbox7\hbox{\accent"5E#1}\penalty 10000\relax\fi\raise 1\ht7
  \hbox{\lower1.15ex\hbox to 1\wd7{\hss\accent"7E\hss}}\penalty 10000
  \hskip-1\wd7\penalty 10000\box7}
  \def\polhk#1{\setbox0=\hbox{#1}{\ooalign{\hidewidth
  \lower1.5ex\hbox{`}\hidewidth\crcr\unhbox0}}}
\providecommand{\bysame}{\leavevmode\hbox to3em{\hrulefill}\thinspace}
\providecommand{\MR}{\relax\ifhmode\unskip\space\fi MR }
% \MRhref is called by the amsart/book/proc definition of \MR.
\providecommand{\MRhref}[2]{%
  \href{http://www.ams.org/mathscinet-getitem?mr=#1}{#2}
}
\providecommand{\href}[2]{#2}


\begin{thebibliography}{KSY19}

\bibitem[BS18]{MinComp1}
Arindam Biswas and Jyoti~Prakash Saha, \emph{On minimal complements in groups},
  Preprint available at \url{https://arxiv.org/abs/1812.10285}, 2018.

\bibitem[CY12]{ChenYang12}
Yong-Gao Chen and Quan-Hui Yang, \emph{On a problem of {N}athanson related to
  minimal additive complements}, SIAM J. Discrete Math. \textbf{26} (2012),
  no.~4, 1532--1536. \MR{3022150}

\bibitem[KSY19]{KissSandorYangJCT19}
S\'{a}ndor~Z. Kiss, Csaba S\'{a}ndor, and Quan-Hui Yang, \emph{On minimal
  additive complements of integers}, J. Combin. Theory Ser. A \textbf{162}
  (2019), 344--353. \MR{3875615}

\bibitem[Nat11]{NathansonAddNT4}
Melvyn~B. Nathanson, \emph{Problems in additive number theory, {IV}: {N}ets in
  groups and shortest length {$g$}-adic representations}, Int. J. Number Theory
  \textbf{7} (2011), no.~8, 1999--2017. \MR{2873139}

\end{thebibliography}
\end{document}